\theoremstyle{plain}
\newtheorem{theoalph}{Theorem}
\newtheorem{thmalph}[theoalph]{Theorem}
\newtheorem{propalph}[theoalph]{Proposition}
\newtheorem{coralph}[theoalph]{Corollary}
\newtheorem{problem}{Problem}
\theoremstyle{definition}
\newtheorem{definalph}[theoalph]{Definition}
\theoremstyle{remark}
\theoremstyle{plain}
\newtheorem{thmsec}{Theorem}[section]
\newtheorem{thm}[thmsec]{Theorem}
\newtheorem{pro}[thmsec]{Proposition}
\newtheorem{lem}[thmsec]{Lemma}
\newtheorem{cor}[thmsec]{Corollary}
\theoremstyle{definition}
\newtheorem{defin}[thmsec]{Definition}
\theoremstyle{remark}
\newtheorem{rem}[thmsec]{Remark}
\newtheorem{rems}[thmsec]{Remarks}
\newtheorem{eg}[thmsec]{Example}
\def\og{\leavevmode\raise.3ex\hbox{$\scriptscriptstyle\langle\!\langle$~}}
\def\fg{\leavevmode\raise.3ex\hbox{~$\!\scriptscriptstyle\,\rangle\!\rangle$}}
\def\noteA#1#2#3{{\begin{small}$#1$\end{small}} & {\begin{small}#2\end{small}} \hfill{\begin{small}\pageref{#3}\end{small}} \\}
\numberwithin{equation}{section}       % Number formulas within sections
\newcommand{\C}{\mathbb{C}}
\newcommand{\pp}{\mathbb{P}^{2}_{\mathbb{C}}}
\newcommand{\pd}{\mathbb{\check{P}}^{2}_{\mathbb{C}}}
\newcommand\diffx{\mathrm{d}x}
\newcommand\diffy{\mathrm{d}y}
\newcommand\diffp{\mathrm{d}p}
\newcommand\diffq{\mathrm{d}q}
\newcommand\Sing{\mathrm{Sing}}
\newcommand\Tang{\mathrm{Tang}}
\newcommand\Leg{\mathrm{Leg}}
\newcommand\IF{\mathrm{I}_{\mathcal{F}}}
\newcommand\IinvF{\mathrm{I}_{\mathcal{F}}^{\mathrm{inv}}}
\newcommand\ItrF{\mathrm{I}_{\mathcal{F}}^{\hspace{0.2mm}\mathrm{tr}}}
\newcommand\ItrH{\mathrm{I}_{\mathcal{H}}^{\hspace{0.2mm}\mathrm{tr}}}
\newcommand\Cinv{\mathrm{C}_{\hspace{-0.3mm}\mathcal{H}}}
\newcommand\radH{\Sigma_{\mathcal{H}}^{\mathrm{rad}}}
\newcommand\radHd{\check{\Sigma}_{\mathcal{H}}^{\mathrm{rad}}}
\newcommand\F{\mathcal{F}}
\newcommand\pref{\mathscr{F}}
\newcommand\preh{\mathscr{H}}
\newcommand\W{\mathcal{W}}
\newcommand\G{\mathcal{G}}
\newcommand\omegahesse{\omega_{\scalebox{0.64}{\ensuremath H}}^{4}}
\newcommand\Omegahesse{\Omega_{\scalebox{0.64}{\ensuremath H}}^{4}}
\newcommand\omegahilbertcinq{\omega_{\scalebox{0.64}{\ensuremath H}}^{5}}
\newcommand\omegahessesept{\omega_{\scalebox{0.64}{\ensuremath H}}^{7}}
\newcommand\Gunderline{{\mspace{2mu}\underline{\mspace{-2mu}\mathcal{G}\mspace{-2mu}}\mspace{2mu}}}
\newcommand\omegaoverline{{\mspace{2mu}\overline{\mspace{-1.4mu}\omega\mspace{-1.4mu}}\mspace{2mu}}}
\newcommand\Omegaoverline{{\mspace{2mu}\overline{\mspace{-1.4mu}\Omega\mspace{-1.4mu}}\mspace{2mu}}}
\newcommand\Hesse{\mathcal{F}_{\hspace{-0.4mm}\raisebox{-0.2mm}{\tiny{$H$}}}^{4}}
\newcommand\Hilbertcinq{\mathcal{F}_{\hspace{-0.4mm}\raisebox{-0.2mm}{\tiny{$H$}}}^{5}}
\newcommand\Hessesept{\mathcal{F}_{\hspace{-0.4mm}\raisebox{-0.2mm}{\tiny{$H$}}}^{7}}
\begin{document}
\selectlanguage{english}
\title[Pre-foliations of co-degree one on $\mathbb{P}^{2}_{\mathbb{C}}$]{Pre-foliations of co-degree one on $\mathbb{P}^{2}_{\mathbb{C}}$ with a flat Legendre transform}
\date{\today}

\author{Samir \textsc{Bedrouni}}

\address{Facult\'e de Math\'ematiques, USTHB, BP $32$, El-Alia, $16111$ Bab-Ezzouar, Alger, Alg\'erie}
\email{sbedrouni@usthb.dz}

\keywords{pre-foliation, homogeneous pre-foliation, web, flatness, Legendre transformation}

\maketitle{}

\begin{abstract}
A holomorphic pre-foliation $\mathscr{F}=\ell\boxtimes\mathcal{F}$ of co-degree $1$ and degree $d$ on $\mathbb{P}^{2}_{\mathbb{C}}$ is the data of a line $\ell$ of $\mathbb{P}^{2}_{\mathbb{C}}$ and a holomorphic foliation $\mathcal{F}$ on $\mathbb{P }^{2}_{\mathbb{C}}$ of degree $d-1.$ We study pre-foliations of co-degree $1$ on $\mathbb{P}^{2}_{\mathbb{ C}}$ with a flat Legendre transform (dual web). After having established some general results on the flatness of the dual $d$-web of a homogeneous pre-foliation of co-degree $1$ and degree $d$, we describe some explicit examples and we show that up to automorphism of $\mathbb{P}^{2}_{\mathbb{C}}$ there are two families and six examples of homogeneous pre-foliations of co-degree $1$ and degree $3$ on $\mathbb {P}^{2}_{\mathbb{C}}$ with a flat dual web. This allows us to prove an analogue for pre-foliations of co-degree $1$ and degree~$3$ of a result, obtained in collaboration with D. Mar\'{\i}n, on foliations of degree $3$ with non-degenerate singularities and a flat Legendre transform. We also show that the dual web of a reduced convex pre-foliation of co-degree $1$ on $\mathbb{P}^{2}_{\mathbb{C}}$ is flat. This is an analogue of a result on foliations of $\mathbb{P}^{2}_{\mathbb{C}}$ due to D. Mar\'{\i}n and J. V. Pereira.

\noindent{\it 2010 Mathematics Subject Classification. --- 14C21, 32S65, 53A60.}
\end{abstract}

\section*{Introduction}
\bigskip

\noindent This article is a continuation of a series of joint works with D. \textsc{Mar\'{\i}n}~\cite{BM18Bull,BM20Bull,BM21Four,BM22arxiv} on holomorphic foliations on the complex projective plane. For the definitions and notations used (web, discriminant $\Delta(\W)$, homogeneous foliation, inflection divisor $\IF$, radial singularity, etc.) we refer to~\cite[Sections~1~and~2]{BM18Bull}.

\noindent We begin by introducing the following definition, where the terminology \og pre-foliation\fg\, is taken from~\cite{CCD13}.

\begin{definalph}\label{definalph:prefeuilletage}
Let $0\leq k\leq d$ be integers. A {\sl holomorphic pre-foliation $\mathscr{F}$ on $\pp$ of co-degree $k$ and degree~$d$}, or simply {\sl of type $(k,d)$}, is the data of a reduced complex projective curve
$\mathcal{C}\subset\pp$ of degree $k$ and a holomorphic foliation $\mathcal{F}$ on $\mathbb{P }^{2}_{\mathbb{C}}$ of degree $d-k.$ We write $\mathscr{F}=\mathcal{C}\boxtimes\F$ and call $\mathcal{C}$ (resp. $\F$) the
{\sl associated curve} (resp. the {\sl associated foliation}) to $\mathscr{F}.$
\end{definalph}

\noindent Such a pre-foliation is given in homogeneous coordinates $[x:y:z]\in\pp$ by a $1$-form of type $\Omega=F(x,y,z)\Omega_0,$ where $\C[x,y,z]_{k}\ni F(x,y,z)=0$ is a homogeneous equation of the curve  $\mathcal{C}$ and $\Omega_0$ is a homogeneous $1$-form of degree $d-k+1$ defining the foliation $\F,$ {\it i.e.} $$\Omega_0=a(x,y,z)\mathrm{d}x+b(x,y,z)\mathrm{d}y+c(x,y,z)\mathrm{d}z,$$ where $a,$ $b$ and $c$ are homogeneous polynomials of degree $d-k+1$ without common factor and satisfying the \textsc{Euler} condition $i_{\mathrm{R}}\omega=0$, where $\mathrm{R}=x\frac{\partial{}}{\partial{x}}+y\frac{\partial{}}{\partial{y}}+z\frac{\partial{}}{\partial{z}}$ denotes the radial vector field and  $i_{\mathrm{R}}$ is the interior product by $\mathrm{R}.$

\noindent We will denote the set of pre-foliations of type $(k,d)$ on $\pp$ by $\mathbf{F}(k,d).$ It can be naturally identified with a \textsc{Zariski} open subset of the space $\mathbb{P}_{\C}^{\hspace{0.2mm}N_1}\times\mathbb{P}_{\C}^{\hspace{0.2mm}N_{2}}$, where $N_1=\frac{k(k+3)}{2}$ and $N_2=(d-k+2)^2-2.$ The set $\mathbf{F}(0,d)$ describes precisely the set of foliations of degree $d$ on $\pp.$
\smallskip

\noindent By~\cite{MP13}, to every pre-foliation $\mathscr{F}=\mathcal{C}\boxtimes\F$ of degree $d\geq1$ and co-degree $k<d$ on $\pp$ we can associate a $d$-web of degree $1$ on the dual projective plane $\pd,$ called {\sl \textsc{Legendre} transform} (or {\sl dual web}) of $\pref$ and denoted by $\Leg\pref$\label{not:Leg-pref}; if $\pref$ is given in an affine chart $(x,y)$ of $\pp$ by a $1$-form $\omega=f(x,y)\left(A(x,y)\mathrm{d}x+B(x,y)\mathrm{d}y\right)$ then, in the affine chart $(p,q)$ of $\pd$ associated to the line $\{y=px-q\}\subset\pp,$ $\Leg\pref$ is described by the implicit differential equation
\[
F(p,q,x):=f(x,px-q)\left(A(x,px-q)+pB(x,px-q)\right)=0, \qquad \text{with} \qquad x=\frac{\mathrm{d}q}{\mathrm{d}p}.
\]
When $k\geq1$, $\Leg\pref$ decomposes as $\Leg\pref=\Leg\mathcal{C}\boxtimes\Leg\F,$ where $\Leg\mathcal{C}$ is the algebraic $k$-web on $\pd$ defined~by the equation $f(x,px-q)=0$ and $\Leg\F$ is the irreducible $(d-k)$-web of degree $1$ on $\pd$ given by~$A(x,px-q)+pB(x,px-q)=0.$

\noindent Conversely, every decomposable $d$-web of degree $1$ on $\pd$ is necessarily the \textsc{Legendre} transform of a certain pre-foliation on $\pp$ of type $(k,d)$, with $1\leq k<d.$

\noindent The curvature of a web $\W$ on $\pp$ is a meromorphic $2$-form with poles along the discriminant $\Delta(\W)$. A~web~with~zero curvature is called flat. The flatness of a web $\W$ on $\pp$ is characterized by the holomorphy of its curvature $K(\W)$ along the generic points of $\Delta(\W)$, \emph{see} \S\ref{sec:courbure-platitude}.

\noindent The subset $\mathbf{FP}(k,d)$ of $\mathbf{F}(k,d)$ consisting of $\pref\in\mathbf{F}(k,d)$ such that $\Leg\pref$ is flat is \textsc{Zariski} closed in $\mathbf{F}(k,d)$.

\begin{problem}\label{problem:1}
{\sl
Let $d$ and $k$ be integers such that $d\geq3$ and $0\leq k<d.$ Describe certain irreducible components of $\mathbf{FP}(k,d).$}
\end{problem}

\noindent This problem is inspired by a part of Problem~9.3 of \cite{MP13} which consists in the description of certain irreducible components of the space of flat $d$-webs of degree $1$ on $\pd.$ The first case $(k,d)=(0,3)$ has been completely treated in \cite{BM21Four}. In fact, the author and \textsc{Mar\'{\i}n} (\emph{see} \cite[Sections~3--6]{BM18Bull}) first studied the flatness of~dual~webs of homogeneous foliations of $\pp$, and they showed that it is possible to reduce the~study~of~the~flatness of dual webs of certain inhomogeneous foliations to the homogeneous framework. This~work~(\cite[Theorems~5.1~and~6.1]{BM18Bull}) then allowed to show that $\mathbf{FP}(0,3)$ has exactly $12$ irreducible components, \emph{see}~\cite[Theorem~D]{BM21Four}. In what follows we are interested in pre-foliations of co-degree~$1$, {\it i.e.} whose associated curve is a line. We will not look for irreducible components of $\mathbf{FP}(1,d)$, but will adapt the approach of \cite{BM18Bull} to co-degree one pre-foliations.

\begin{definalph}\label{definalph:prefeuilletage-homogene}
A pre-foliation on $\pp$ is said to be {\sl homogeneous} if there is an affine chart $(x,y)$ of $\pp$ in which it is invariant under the action of the group of homotheties $(x,y)\longmapsto \lambda(x,y),\hspace{1mm} \lambda\in \mathbb{C}^{*}.$
\end{definalph}

\noindent A homogeneous pre-foliation $\preh$ of type $(1,d)$ on $\pp$ is then of the form $\preh=\ell\boxtimes\mathcal{H},$ where $\mathcal{H}$ is a homogeneous foliation of degree $d-1$ on $\pp$ and where $\ell$ is a line passing through the origin $O$ or $\ell=L_\infty.$
\vspace{-2mm}

\noindent Theorem~3.1~of~\cite{BM18Bull} states that the web $\Leg\mathcal{H}$ is flat if and only if its curvature is holomorphic on the transverse part of its discriminant $\Delta(\Leg\mathcal{H}).$\label{not:Delta-W} We prove in Section~\S\ref{sec:etude-platitude-tissu-dual-pre-feuilletage-homogene} a similar result (Theorem~\ref{thm:holomorphie-courbure-G(I^tr)-D-ell}) for the web $\Leg\preh.$
\newpage
\hfill
\vspace{2mm}

\noindent When $\ell$ passes through the origin, we establish effective criteria for the holomorphy of the curvature of~$\Leg\preh$~on certain irreducible components of the discriminant $\Delta(\Leg\preh)$ (Theorems~\ref{thm:holomorphie-courbure-homogene-D-neq-D-ell}~and~\ref{thm:holomorphie-courbure-homogene-D-ell}). In~fact,~Theorems~\ref{thm:holomorphie-courbure-G(I^tr)-D-ell},~\ref{thm:holomorphie-courbure-homogene-D-neq-D-ell}~and~\ref{thm:holomorphie-courbure-homogene-D-ell} provide a complete characterization of the flatness of $\Leg\preh.$
\smallskip

\noindent When $\ell=L_\infty$ we show (Theorem~\ref{thm:K-Leg-H-egale-K-Leg-L-infini-H}) that the webs $\Leg\mathcal{H}$ and $\Leg\preh$ have the same curvature; in particular the flatness of $\Leg\preh$ is equivalent to that of $\Leg\mathcal{H}.$ More particularly, in degree $d=3$ the web~$\Leg\preh$ is flat (Corollary~\ref{cor:Leg-L-infini-H2-plat}).
\smallskip

\noindent Recall (\emph{see}~\cite{MP13}) that a holomorphic foliation on $\pp$ is said to be \textsl{convex} if its leaves other than straight lines have no inflection points. Note (\emph{see} \cite{Per01}) that if $\F$ is a foliation of degree $d\geq1$ on $\pp,$ then $\F$ cannot have more than $3d$ (distinct) invariant lines. Moreover, if this bound is reached, then $\F$ is necessarily convex; in~this~case $\F$ is said to be \textsl{reduced convex}. We naturally extend the notions of convexity and reduced convexity of foliations to pre-foliations by putting:
\begin{definalph}\label{definalph:prefeuilletage-convexe-convexe-reduit}
Let $\pref=\mathcal{C}\boxtimes\F$ be a pre-foliation on $\pp.$ We say that $\pref$ is \textsl{convex} (resp. \textsl{reduced convex}) if the foliation $\F$ is convex (resp. reduced convex) and if moreover the curve $\mathcal{C}$ is invariant by $\F.$
\end{definalph}

\noindent From this definition and Theorem~\ref{thm:holomorphie-courbure-G(I^tr)-D-ell} we will deduce the following corollary, which is an analogue of Corollary~3.4 of \cite{BM18Bull}.
\begin{coralph}[\rm{Corollary~\ref{cor:platitude-pre-feuilletage-homogene-convexe}}]\label{coralph:platitude-pre-feuilletage-homogene-convexe}
{\sl The dual web of a homogeneous convex pre-foliation of co-degree $1$ on~$\pp$ is flat.}
\end{coralph}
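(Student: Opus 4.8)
The plan is to deduce the statement from the flatness criterion furnished by Theorem~\ref{thm:holomorphie-courbure-G(I^tr)-D-ell}, the analogue for $\Leg\preh$ of Theorem~3.1 of~\cite{BM18Bull}: the $d$-web $\Leg\preh$ is flat if and only if its curvature extends holomorphically over the transverse part of the discriminant $\Delta(\Leg\preh)$. Everything therefore reduces to ruling out poles of this curvature along the transverse components of $\Delta(\Leg\preh)$, and the mechanism will be convexity. Write $\preh=\ell\boxtimes\mathcal{H}$, so that by Definition~\ref{definalph:prefeuilletage-convexe-convexe-reduit} the foliation $\mathcal{H}$ is homogeneous convex of degree $d-1$ and the line $\ell$ is invariant by $\mathcal{H}$; dually $\Leg\preh=\Leg\ell\boxtimes\Leg\mathcal{H}$, where $\Leg\ell$ is the pencil of lines through the point $\check{\ell}\in\pd$ dual to $\ell$. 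Since $\Leg\ell$ is a foliation, $\Delta(\Leg\ell)=\vide$, whence $\Delta(\Leg\preh)=\Delta(\Leg\mathcal{H})\cup\Tang(\Leg\ell,\Leg\mathcal{H})$.

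To organise the computation I would invoke the standard additivity of the web curvature over its $3$-subwebs. Denoting by $\mathcal{F}_1,\dots,\mathcal{F}_{d-1}$ the local leaves of $\Leg\mathcal{H}$ off its discriminant, one has, as an identity of meromorphic $2$-forms,
\[
K(\Leg\preh)=K(\Leg\mathcal{H})+\sum_{1\leq i<j\leq d-1}K\bigl(\Leg\ell\boxtimes\mathcal{F}_i\boxtimes\mathcal{F}_j\bigr),
\]
which isolates exactly the contributions created by superposing the pencil $\Leg\ell$. The first term is already holomorphic on the transverse part of $\Delta(\Leg\mathcal{H})$: convexity of $\mathcal{H}$ means that its non-linear leaves carry no inflection points, i.e. the transverse inflection divisor $\ItrH$ is reduced and supported on $\mathcal{H}$-invariant lines, which is precisely what makes $\Leg\mathcal{H}$ flat in the homogeneous foliation case (Corollary~3.4 of~\cite{BM18Bull}).

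The remaining, and principal, task is to control the mixed terms $K(\Leg\ell\boxtimes\mathcal{F}_i\boxtimes\mathcal{F}_j)$. Here I would use the invariance of $\ell$: a line of $\pp$ distinct from $\ell$ can be tangent to $\mathcal{H}$ at a point of $\ell$ only if it passes through a singular point of $\mathcal{H}$ lying on $\ell$, so $\Tang(\Leg\ell,\Leg\mathcal{H})$ is concentrated along the dual lines $\check{s}$, $s\in\Sing(\mathcal{H})\cap\ell$, together with the point $\check{\ell}$ — that is, along radial configurations rather than genuinely transverse ones. Along such components the effective holomorphy criteria for homogeneous pre-foliations of co-degree $1$ (Theorems~\ref{thm:holomorphie-courbure-homogene-D-neq-D-ell} and~\ref{thm:holomorphie-courbure-homogene-D-ell}) apply and yield the holomorphy of $K(\Leg\preh)$; in the special case $\ell=L_\infty$ one may instead invoke Theorem~\ref{thm:K-Leg-H-egale-K-Leg-L-infini-H}, which gives $K(\Leg\preh)=K(\Leg\mathcal{H})$ outright and reduces the statement to the foliation case. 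Combining these with the holomorphy of $K(\Leg\mathcal{H})$ on the transverse part of $\Delta(\Leg\mathcal{H})$, the curvature of $\Leg\preh$ is holomorphic over the whole transverse part of $\Delta(\Leg\preh)$, and Theorem~\ref{thm:holomorphie-courbure-G(I^tr)-D-ell} gives the flatness of $\Leg\preh$.

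The step I expect to be the main obstacle is exactly the control of the mixed $3$-subweb curvatures $K(\Leg\ell\boxtimes\mathcal{F}_i\boxtimes\mathcal{F}_j)$: because the curvature of a superposition is not the sum of the curvatures of its factors, one cannot merely quote the flatness of $\Leg\mathcal{H}$, and the required cancellations have to be read off from the local form of $\preh$ near the invariant line $\ell$. It is the combination of the convexity of $\mathcal{H}$ with the $\mathcal{H}$-invariance of $\ell$ — which forces all the new tangencies to concentrate on radial, rather than transverse, components of the discriminant — that I expect to make these contributions holomorphic and thereby close the argument.
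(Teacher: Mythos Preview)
You have the right tool in hand --- Theorem~\ref{thm:holomorphie-courbure-G(I^tr)-D-ell} --- but you misread both its statement and the meaning of convexity, and this causes you to miss that the proof is a one-liner. In the invariant case that theorem says precisely: $\Leg\preh$ is flat if and only if $K(\Leg\preh)$ is holomorphic along $\G_{\mathcal{H}}(\ItrH)$. Now the transverse inflection divisor $\ItrH$ is by definition the part of $\IH$ \emph{not} supported on $\mathcal{H}$-invariant lines (equivalently, it records the critical non-fixed points of $\Gunderline_{\mathcal H}$). Convexity of $\mathcal{H}$ therefore means exactly that $\ItrH=\vide$, not that it is ``reduced and supported on invariant lines'' as you write. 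Since a convex pre-foliation also has $\ell$ invariant by $\mathcal{H}$ (Definition~\ref{definalph:prefeuilletage-convexe-convexe-reduit}), the holomorphy condition of Theorem~\ref{thm:holomorphie-courbure-G(I^tr)-D-ell} is vacuous and flatness follows immediately. This is the paper's argument.

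Your curvature decomposition $K(\Leg\preh)=K(\Leg\mathcal{H})+\sum K(\Leg\ell\boxtimes\F_i\boxtimes\F_j)$ is a valid identity, but it is an unnecessary detour here, and the way you try to close it has a real gap: you assert that Theorems~\ref{thm:holomorphie-courbure-homogene-D-neq-D-ell} and~\ref{thm:holomorphie-courbure-homogene-D-ell} ``apply and yield the holomorphy'' along the components $\check{s}$, but those are numerical criteria and you never verify them. In fact the holomorphy along $\radHd$ and along $D_\ell$ (when $\ell$ is invariant) is exactly what is established inside the proof of Theorem~\ref{thm:holomorphie-courbure-G(I^tr)-D-ell} itself (via \cite[Proposition~2.6]{MP13} and Proposition~\ref{pro:holomorphie-courbure-F-W-nu-W-d-nu-1}), so by re-proving those pieces you would just be re-deriving the theorem you already cite. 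The cleanest fix is simply to observe $\ItrH=\vide$ and apply Theorem~\ref{thm:holomorphie-courbure-G(I^tr)-D-ell} directly.
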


\noindent In~\S\ref{sec:application-homogene-deg-type-2} we give an application of the results of~\S\ref{sec:etude-platitude-tissu-dual-pre-feuilletage-homogene} to homogeneous pre-foliations $\preh=\ell\boxtimes\mathcal{H}$~of~co-degree~$1$ such that the degree of type of $\mathcal{H}$ is equal to $2,$ {\it i.e.} $\deg\mathcal{T}_{\mathcal{H}}=2$ (\emph{see} \cite[Definition~2.3]{BM18Bull} for the definitions of the type $\mathcal{T}_{\mathcal{H}}$\label{not:Type-H} and the degree of type $\deg\mathcal{T}_{\mathcal{H}}$). More precisely, we describe, up to automorphism of $\pp$, all homogeneous pre-foliations $\preh=\ell\boxtimes\mathcal{H}$ of co-degree $1$ and degree $d\geq3$ such that $\deg\mathcal{T}_{\mathcal{H}}=2$ and the $d$-web $\Leg\preh$ is flat (Proposition~\ref{pro:class-pre-homogenes-plats-co-degre-1-degre-d-geq-4-degre-Type-2}). We obtain in particular, for $d=3$, the classification up to automorphism of homogeneous pre-foliations of type $(1,3)$ on $\pp$ whose dual $3$-web is flat: {\sl up to automorphism of $\pp,$ there are two families and six examples of homogeneous pre-foliations of co-degree $1$ and degree $3$ on $\mathbb {P}^{2}_{\mathbb{C}}$ with a flat \textsc{Legendre} transform}, \emph{see} Corollary~\ref{cor:class-pre-homogenes-plats-co-degre-1-degre-3}.
\medskip

\noindent In $2013$ \textsc{Mar\'{\i}n} and \textsc{Pereira} \cite[Theorem~4.2]{MP13} proved that the dual web of a reduced convex foliation on~$\pp$ is flat. We show in~\S\ref{sec:pre-pre-feuill-convexe-reduit} the following analogous result for co-degree one pre-foliations.
\begin{thmalph}\label{thmalph:ell-invariante-convexe-reduit-plat}
{\sl Let $\pref=\ell\boxtimes\F$ be a reduced convex pre-foliation of co-degree $1$ and degree $d\geq3$ on $\pp.$ Then the $d$-web $\Leg\pref$ is flat.}
\end{thmalph}

\noindent The following problem then arises.
\begin{problem}\label{problem:2}
{\sl
Let $\F$ be a reduced convex foliation of degree greater than or equal to $2$ on $\pp$ and let $\ell$ be a line of $\pp$ which is not invariant by $\F.$ Determine the relative position of the line $ \ell$ with respect to the invariant lines of $\F$ such that the dual web of the pre-foliation $\ell\boxtimes\F$ is flat.
}
\end{problem}

\noindent To our knowledge the only reduced convex foliations known in the literature are those presented in~\cite[Table~1.1]{MP13}: the \textsc{Fermat} foliation $\F_{0}^{d-1}$ of degree $d-1$, the \textsc{Hesse} foliation $\Hesse$ of degree~$4$, the~\textsc{Hilbert}~modular~foliation $\Hilbertcinq$ of degree~$5$ and the \textsc{Hesse} foliation $\Hessesept$ of degree $7$ defined in affine chart respectively~by~the~$1$-forms
\begin{align*}
&\hspace{4cm}\omegaoverline_{0}^{d-1}=x\diffy-y\diffx+y^{d-1}\diffx-x^{d-1}\diffy,\\
&\hspace{3.1cm}\omegahesse=y(2x^{3}-y^{3}-1)\mathrm{d}x+x(2y^{3}-x^{3}-1)\mathrm{d}y,\\
&\omegahilbertcinq=(y^2-1)(y^2-(\sqrt{5}-2)^2)(y+\sqrt{5}x)\mathrm{d}x-(x^2-1)(x^2-(\sqrt{5}-2)^2)(x+\sqrt{5}y)\mathrm{d}y,\\
&\hspace{1.83cm}\omegahessesept=(y^3-1)(y^3+7x^3+1)y\mathrm{d}x-(x^3-1)(x^3+7y^3+1)x\mathrm{d}y.
\end{align*}

\noindent The following two propositions, which will be proved in~\S\ref{sec:pre-pre-feuill-convexe-reduit}, give an answer to Problem~\ref{problem:2} in the case of the \textsc{Fermat} foliation $\F_{0}^{d-1}$ and the \textsc{Hesse} foliation $\Hesse.$
\begin{propalph}\label{proalph:ell-non-invariante-Fermat}
{\sl Let $d\geq3$ be an integer and let $\ell$ be a line of $\pp.$ Assume that $\ell$ is not invariant by the \textsc{Fermat} foliation $\F_{0}^{d-1}$ and that the $d$-web $\Leg(\ell\boxtimes\F_{0}^{d-1})$ is flat. Then $d\in\{3,4\}$ and the line $\ell$ joins two (resp. three) singularities (necessarily non-radial) of $\F_{0}^{d-1}$ if $d=3$ (resp. if $d=4$).
}
\end{propalph}

\vspace{2mm}
\begin{figure}[h!]
\centering
%\hspace{0cm}
\begin{tikzpicture}%[scale=0.8]
%\draw[->,color=blue] (-10.48,0) -- (10.48,0);
%\draw[->,color=blue] (0,-10) -- (0,10);
%-----------------------------------degree-d=3-----------------------------------------------------------:
\begin{scope}[x=0.71cm,y=0.71cm,shift={(-5.625,4.25)},scale=1,rotate=-135]
%-------------------------------------------------------------------------------------------------------:
\draw[color=red,fill=red!100,line width=0.8pt,shorten >=-0.5cm,shorten <=-0.5cm](0,0) -- (0,3)--(0,6);
\draw[color=red,fill=red!100,line width=0.8pt,shorten >=-0.5cm,shorten <=-0.5cm](0,0) -- (3,0)--(6,0);
\draw[color=red,fill=red!100,line width=0.8pt,shorten >=-0.5cm,shorten <=-0.5cm](0,0) -- (2,2)--(3,3);
\draw[color=red,fill=red!100,line width=0.8pt,shorten >=-0.5cm,shorten <=-0.5cm](0,6) --(3,3)-- (6,0);
\draw[color=red,fill=red!100,line width=0.8pt,shorten >=-0.5cm,shorten <=-0.5cm](0,3) -- (2,2)--(6,0);
\draw[color=red,fill=red!100,line width=0.8pt,shorten >=-0.5cm,shorten <=-0.5cm](3,0) -- (2,2)--(0,6);
%-------------------------------------------------------------------------------------------------------:
\draw[color=blue,fill=blue!100,line width=0.8pt,shorten >=-0.5cm,shorten <=-0.5cm](0,3) --(3,0);
\draw[color=blue,fill=blue!100,line width=0.8pt,shorten >=-0.5cm,shorten <=-0.5cm](0,3) --(3,3);
\draw[color=blue,fill=blue!100,line width=0.8pt,shorten >=-0.5cm,shorten <=-0.5cm](3,0) --(3,3);
%-------------------------------------------------------------------------------------------------------:
\draw[color=red,fill=red!100]   (0,0) circle (0.075cm);
\draw[color=red,fill=red!100]   (2,2) circle (0.075cm);
\draw[color=red,fill=red!100]   (0,6) circle (0.075cm);
\draw[color=red,fill=red!100]   (6,0) circle (0.075cm);
\draw[color=blue,fill=blue!100] (0,3) circle (0.075cm);
\draw[color=blue,fill=blue!100] (3,0) circle (0.075cm);
\draw[color=blue,fill=blue!100] (3,3) circle (0.075cm);
%-------------------------------------------------------------------------------------------------------:
\draw(4.5,3.4)node[right]{$d=3$};
%-------------------------------------------------------------------------------------------------------:
\end{scope}
%-----------------------------------degree-d=4-----------------------------------------------------------:
\begin{scope}[x=1.26cm,y=0.63cm,shift={(3.2,2.4)},scale=0.3,rotate=-180]
%-------------------------------------------------------------------------------------------------------:
\draw[color=red,fill=red!100,line width=0.8pt,shorten >=-1.4cm,shorten <=-0.5cm](0,-8)--(0,8/3);
\draw[color=red,fill=red!100,line width=0.8pt,shorten >=-0.5cm,shorten <=-0.5cm](-4,0) -- (4,0);
\draw[color=red,fill=red!100,line width=0.8pt,shorten >=-0.5cm,shorten <=-0.5cm](0,-8) --(-8,8);
\draw[color=red,fill=red!100,line width=0.8pt,shorten >=-0.5cm,shorten <=-0.5cm](-8/3,-8/3) --(8,8);
\draw[color=red,fill=red!100,line width=0.8pt,shorten >=-1.02cm,shorten <=-0.5cm](0,-8) --(-2,4);
\draw[color=red,fill=red!100,line width=0.8pt,shorten >=-0.5cm,shorten <=-0.5cm](-4,0) --(8,8);
\draw[color=red,fill=red!100,line width=0.8pt,shorten >=-0.5cm,shorten <=-0.5cm](-4,0) --(8/3,-8/3);
\draw[color=red,fill=red!100,line width=0.8pt,shorten >=-0.5cm,shorten <=-0.5cm](0,-8) --(8,8);
\draw[color=red,fill=red!100,line width=0.8pt,shorten >=-0.5cm,shorten <=-0.5cm](-8,8) --(8/3,-8/3);
%-------------------------------------------------------------------------------------------------------:
\draw[color=blue,fill=blue!100,line width=0.8pt,shorten >=-0.5cm,shorten <=-0.5cm](-8/3,-8/3) --(4,0);
\draw[color=blue,fill=blue!100,line width=0.8pt,shorten >=-1.67cm,shorten <=-0.5cm](-8/3,-8/3) --(0,8/3);
\draw[color=blue,fill=blue!100,line width=0.8pt,shorten >=-0.5cm,shorten <=-0.5cm](-8,8) --(4,0);
\draw[color=blue,fill=blue!100,line width=0.8pt,shorten >=-0.5cm,shorten <=-0.5cm](-8,8) --(2,-4);
%-------------------------------------------------------------------------------------------------------:
\draw[color=red,fill=red!100] (0,0) circle (0.25cm);
\draw[color=red,fill=red!100] (0,-8) circle (0.25cm);
\draw[color=red,fill=red!100] (-4,0) circle (0.25cm);
%-------------------------------------------------------------------------------------------------------:
\draw[color=orange,fill=orange!45] (-8/7,-8/7) circle (0.25cm);
\draw[color=orange,fill=orange!45] (-8/5,8/5) circle (0.25cm);
\draw[color=orange,fill=orange!45] (8/3,-8/3) circle (0.25cm);
\draw[color=orange,fill=orange!45] (8,8) circle (0.25cm);
%-------------------------------------------------------------------------------------------------------:
\draw[color=blue,fill=blue!100] (-8/3,-8/3) circle (0.25cm);
\draw[color=blue,fill=blue!100] (0,-8/5) circle (0.25cm);
\draw[color=blue,fill=blue!100] (0,8/3) circle (0.25cm);
\draw[color=blue,fill=blue!100] (-4/3,0) circle (0.25cm);
\draw[color=blue,fill=blue!100] (4,0) circle (0.25cm);
\draw[color=blue,fill=blue!100] (-8,8) circle (0.25cm);
%-------------------------------------------------------------------------------------------------------:
\draw(1.5,13)node[right]{$d=4$};
%-------------------------------------------------------------------------------------------------------:
\end{scope}
\end{tikzpicture}
\vspace{0.46cm}

\begin{minipage}[c]{0.84\linewidth}
{\normalfont\Small Relative positions of the line $\ell$ (in blue) with respect to the invariant lines (in red) of the \textsc{Fermat} foliations $\mathcal{F}_{0}^{2}$ and $\mathcal{ F}_{0}^{3}.$ The foliation $\mathcal{F}_{0}^{2}$ ($d=3$) has $4$ radial singularities (red points) and $3$ non-radial singularities (blue~points) with \textsc{Baum}-\textsc{Bott} invariant $0.$ The foliation $\mathcal{F}_{0}^{3}$ ($d=4$) admits $7$ radial singularities, $4$ of order one (orange points) and $3$ of order two (red points), and $6$ non-radial singularities with \textsc{Baum}-\textsc{Bott} invariant~$-\frac{1}{2}.$}
\end{minipage}
\end{figure}

\begin{propalph}\label{proalph:ell-non-invariante-Hesse}
{\sl
Let $\ell$ be a line of $\pp$ which is not invariant by the \textsc{Hesse} foliation $\Hesse.$ Assume that the $5$-web $\Leg(\ell\boxtimes\Hesse)$ is flat. Then the line $\ell$ passes through four (necessarily non-radial) singularities of $\Hesse$.
}
\end{propalph}

\noindent The idea of the proofs of~Propositions~\ref{proalph:ell-non-invariante-Fermat}~and~\ref{proalph:ell-non-invariante-Hesse} will be to reduce to the homogeneous case, by showing that the closures of the $\mathrm{Aut}(\pp)$-orbits of the pre-foliations $\ell\boxtimes\F_{0}^{d-1}$ and $\ell\boxtimes\Hesse$  contain homogeneous pre-foliations.
\smallskip

\noindent Theorem~6.1~of~\cite{BM18Bull} says that every foliation of degree $3$ on $\pp$ with non-degenerate singularities and a flat \textsc{Legendre} transform is linearly conjugate to the \textsc{Fermat} foliation $\F_{0}^{3}.$ We prove in~\S\ref{sec:pre-feuilletages-codegre-1-degre-3} the following similar result for pre-foliations of co-degree $1$ and degree $3.$
\begin{thmalph}\label{thmalph:Fermat2}
{\sl Let $\pref=\ell\boxtimes\F$ be a pre-foliation of co-degree $1$ and degree $3$ on $\pp.$ Assume that the foliation $\F$ has only non-degenerate singularities and that the $3$-web $\Leg\pref$ is flat. Then $\F$ is linearly conjugate to the \textsc{Fermat} foliation $\F_{0}^{2},$ and the line $\ell$ is either invariant by $\F$ or it joins two non-radial singularities of~$\F.$
}
\end{thmalph}

\noindent The proof of this theorem will essentially use the classification of homogeneous pre-foliations of type $(1,3)$ on $\pp$ whose dual web is flat (Corollary~\ref{cor:class-pre-homogenes-plats-co-degre-1-degre-3}).

\newpage
\hfill

%==================================================================================================================================================================
\section{Reminders on the fundamental form and curvature of a web}\label{sec:courbure-platitude}
\bigskip

\noindent In this section, we briefly recall the definitions of the fundamental form and the curvature of a $d$-web $\W.$ Let us first assume that $\W$ is a germ of completely decomposable $d$-web on $(\C^2,0)$, $\mathcal{W}=\mathcal{F}_{1}\boxtimes\cdots\boxtimes\mathcal{F}_{d}.$ For~$i=1,\ldots,d,$~let~$\omega_{i}$~be~a~$1$-form with an isolated singularity at $0$ defining the foliation~$\mathcal{F}_{i}.$ Following~\cite{PP08}, for~each~triple~$(r,s,t)$ with $1\leq r<s<t\leq d,$ one defines $\eta_{rst}=\eta(\mathcal{F}_{r}\boxtimes\mathcal{F}_{s}\boxtimes\mathcal{F}_{t})$ as the unique meromorphic $1$-form satisfying the following equalities:
\begin{equation}\label{equa:eta-rst}
{\left\{\begin{array}[c]{lll}
\mathrm{d}(\delta_{st}\,\omega_{r}) &=& \eta_{rst}\wedge\delta_{st}\,\omega_{r}\\
\mathrm{d}(\delta_{tr}\,\omega_{s}) &=& \eta_{rst}\wedge\delta_{tr}\,\omega_{s}\\
\mathrm{d}(\delta_{rs}\,\omega_{t}) &=& \eta_{rst}\wedge\delta_{rs}\,\omega_{t}
\end{array}
\right.}
\end{equation}
where $\delta_{ij}$ denotes the function defined by $\omega_{i}\wedge\omega_{j}=\delta_{ij}\,\mathrm{d}x\wedge\mathrm{d}y.$ One calls {\sl fundamental form} of the web $\mathcal{W}=\mathcal{F}_{1}\boxtimes\cdots\boxtimes\mathcal{F}_{d}$ the $1$-form\label{not:eta-W}
\begin{equation}\label{equa:eta}
\hspace{7mm}\eta(\mathcal{W})=\eta(\mathcal{F}_{1}\boxtimes\cdots\boxtimes\mathcal{F}_{d})=\sum_{1\le r<s<t\le d}\eta_{rst}.
\end{equation}
One can easily check that $\eta(\mathcal{W})$ is a meromorphic $1$-form with poles along the discriminant $\Delta(\mathcal{W})$ of~$\mathcal{W},$ and~that it is well-defined up to addition of a closed logarithmic $1$-form $\dfrac{\mathrm{d}f}{f}$ with $f\in\mathcal{O}^*(\mathbb{C}^{2},0)$ (\emph{cf.}~\cite{Rip05,BM18Bull}).
\smallskip

\noindent Now, if $\W$ is a (not necessarily completely decomposable) $d$-web on a complex surface $M$ then its pull-back by a suitable ramified \textsc{Galois} covering is completely decomposable. The invariance of the fundamental form of this new web by the action of the \textsc{Galois} group allows us to descend it to a global meromorphic~$1$-form $\eta(\mathcal{W})$ on $M$, with poles along the discriminant of $\W$ (\emph{see} \cite{MP13}).
\smallskip

\noindent The {\sl curvature} of the web $\mathcal{W}$\label{not:K-W} is by definition the $2$-form
\begin{align*}
&K(\mathcal{W})=\mathrm{d}\,\eta(\mathcal{W}).
\end{align*}
It is a meromorphic $2$-form with poles along the discriminant $\Delta(\mathcal{W})$ of $\mathcal{W},$ canonically associated to $\mathcal{W}$. More precisely, for any dominant holomorphic map $\varphi,$ one has $K(\varphi^{*}\mathcal{W})=\varphi^{*}K(\mathcal{W}).$
\smallskip

\noindent A $d$-web $\mathcal{W}$ is said to be {\sl flat} if its curvature $K(\mathcal{W})$ vanishes identically.
\smallskip

\noindent Let us finally note that a $d$-web $\mathcal{W}$ on $\pp$ is flat if and only if its curvature is holomorphic over
the generic points of the irreducible components of $\Delta(\mathcal{W}).$ This follows from the holomorphy of $K(\W)$ on $\pp\setminus\Delta(\mathcal{W})$ and from the fact that there are no holomorphic $2$-forms on $\pp$ other than the zero $2$-form.

%==================================================================================================================================================================

\section{Discriminant of the dual web of a co-degree one pre-foliation}
\bigskip

\noindent Recall that if $\F$ is a foliation on $\pp$, the \textsc{Gauss} map of $\F$ is the rational map $\mathcal{G}_{\F}\hspace{1mm}\colon\pp\dashrightarrow \pd$ defined at every regular point $m$ of $\F$ by $\mathcal{G}_{\F}(m)=\mathrm{T}^{\mathbb{P}}_{m}\F,$\label{not:Gauss-F} where $\mathrm{T}^{\mathbb{P}}_{m}\F$ denotes the tangent line to the leaf of $\F$ passing through $m.$ If $\mathcal{C}$ is a curve on $\pp$ passing through some singular points of~$\F$, one defines $\mathcal{G}_{\mathcal{F}}(\mathcal{C})$ as the closure of $\mathcal{G}_{\F}(\mathcal{C}\setminus\Sing\F).$\label{not:Sing-F}
\begin{lem}\label{lem:Delta-Leg-ell-F}
{\sl
Let $\mathscr{F}=\ell\boxtimes\F$ be a pre-foliation of co-degree $1$ on $\pp.$

\noindent\textbf{\textit{1.}} If the line $\ell$ is invariant by $\F$, then
\begin{align*}
\Delta(\Leg\mathscr{F})=\Delta(\Leg\F)\cup\check{\Sigma}_{\F}^{\ell},
\end{align*}
where $\check{\Sigma}_{\F}^{\ell}$ denotes the set of lines dual to the points of $\Sigma_{\F}^{\ell}:=\Sing\F\cap\ell.$

\noindent\textbf{\textit{2.}} If the line $\ell$ is not invariant by $\F$, then
\begin{align*}
\Delta(\Leg\mathscr{F})=\Delta(\Leg\F)\cup\mathcal{G}_{\F}(\ell)\cup\check{\Sigma}_{\F}^{\ell}.
\end{align*}
}
\end{lem}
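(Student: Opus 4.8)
Le plan est de ramener le calcul de $\Delta(\Leg\pref)$ \`a celui du lieu de tangence entre les deux composantes de la superposition. Puisque $\ell$ est de co-degr\'e $1$, la $1$-forme d\'efinissant $\pref$ s'\'ecrit $\omega = f(x,y)\left(A(x,y)\diffx + B(x,y)\diffy\right)$ avec $\deg f = 1$, et $\Leg\pref = \Leg\ell \boxtimes \Leg\F$ o\`u $\Leg\ell$ est d\'ecrit par $F_{1}(p,q,x) := f(x,px-q) = 0$ et $\Leg\F$ par $F_{2}(p,q,x) := A(x,px-q) + pB(x,px-q) = 0$. Comme $F_{1}$ est de degr\'e $1$ en $x$, le tissu $\Leg\ell$ est un $1$-tissu, donc un feuilletage, de sorte que $\Delta(\Leg\ell) = \vide$. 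En utilisant la d\'ecomposition du discriminant d'une superposition de tissus, $\Delta(\Leg\pref) = \Delta(\Leg\ell) \cup \Delta(\Leg\F) \cup \Tang(\Leg\ell, \Leg\F) = \Delta(\Leg\F) \cup \Tang(\Leg\ell, \Leg\F)$, et tout revient donc \`a identifier le lieu $\Tang(\Leg\ell, \Leg\F)$.

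Ce lieu se lit sur le r\'esultant: $\Tang(\Leg\ell,\Leg\F)$ est l'ensemble des $(p,q)$ pour lesquels $\mathrm{Res}_{x}(F_{1},F_{2}) = 0$, c'est-\`a-dire pour lesquels $F_{1}$ et $F_{2}$ admettent une racine commune $x_{0}$. Notons $D_{(p,q)} := \{y = px - q\}$ la droite associ\'ee \`a $(p,q) \in \pd$. Une telle racine commune correspond \`a un point $m_{0} = (x_{0}, px_{0}-q)$ qui est sur $\ell$ (car $F_{1}(p,q,x_{0}) = 0$) et en lequel $D_{(p,q)}$ est tangente \`a $\F$ (car $F_{2}(p,q,x_{0}) = A(m_{0}) + pB(m_{0}) = 0$, i.e. la pente de $\F$ en $m_{0}$ vaut $p$). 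Comme $m_{0} \in D_{(p,q)}$ automatiquement, on a $m_{0} \in \ell \cap D_{(p,q)}$, et il reste \`a discuter selon que $\ell$ est invariante ou non.

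Si $\ell$ n'est pas invariante par $\F$, alors pour $(p,q)$ g\'en\'erique la droite $D_{(p,q)} \neq \ell$ coupe $\ell$ en l'unique point $m_{0} = \ell \cap D_{(p,q)}$, et la condition de tangence en $m_{0} \notin \Sing\F$ impose $D_{(p,q)} = \T^{\mathbb{P}}_{m_{0}}\F = \mathcal{G}_{\F}(m_{0})$. Lorsque $m_{0}$ parcourt $\ell \setminus \Sing\F$, le point $(p,q)$ d\'ecrit $\mathcal{G}_{\F}(\ell \setminus \Sing\F)$, dont l'adh\'erence est par d\'efinition $\mathcal{G}_{\F}(\ell)$; d'o\`u $\Tang(\Leg\ell,\Leg\F) = \mathcal{G}_{\F}(\ell)$ et l'assertion \textbf{\textit{2}}. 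Si $\ell$ est invariante par $\F$, alors pour $m_{0} \in \ell \setminus \Sing\F$ la feuille de $\F$ passant par $m_{0}$ est $\ell$, donc $\T^{\mathbb{P}}_{m_{0}}\F = \ell$: la tangence en un tel point force $D_{(p,q)} = \ell$ et ne produit que le point $\check{\ell}$, de codimension $2$, qui n'intervient pas dans le discriminant. Restent les points $m_{0} \in \Sing\F \cap \ell = \Sigma_{\F}^{\ell}$: en un tel point $A(m_{0}) = B(m_{0}) = 0$, donc $F_{2}(p,q,x_{0}) = 0$ quel que soit $p$, et $\mathrm{Res}_{x}(F_{1},F_{2}) = 0$ \'equivaut alors \`a ce que $D_{(p,q)}$ passe par $m_{0}$. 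L'ensemble des droites passant par $m_{0}$ \'etant la droite duale $\check{m}_{0}$, on obtient, en faisant varier $m_{0}$ dans $\Sigma_{\F}^{\ell}$, que $\Tang(\Leg\ell,\Leg\F) = \check{\Sigma}_{\F}^{\ell}$, d'o\`u l'assertion \textbf{\textit{1}}.

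Le point d\'elicat sera de justifier rigoureusement les ingr\'edients alg\'ebriques: d'une part $\Delta(\Leg\ell) = \vide$ et la d\'ecomposition $\mathrm{disc}_{x}(F_{1}F_{2}) = \mathrm{disc}_{x}(F_{1})\,\mathrm{disc}_{x}(F_{2})\,\mathrm{Res}_{x}(F_{1},F_{2})^{2}$, le facteur $\mathrm{disc}_{x}(F_{1})$ \'etant une constante non nulle puisque $\deg_{x}F_{1} = 1$; d'autre part le contr\^ole des composantes de codimension $2$ — comme le point $\check{\ell}$ dans le cas invariant — qui ne contribuent pas au discriminant vu comme diviseur r\'eduit. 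Il faudra enfin s'assurer qu'aucune composante parasite (\`a l'infini ou contract\'ee par la dualit\'e) n'appara\^{\i}t dans $\{\mathrm{Res}_{x}(F_{1},F_{2}) = 0\}$, et que les identit\'es, \'etablies sur un ouvert dense d'une carte affine, se prolongent aux adh\'erences sur $\pd$.
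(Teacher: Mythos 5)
Votre preuve est correcte et aboutit aux deux assertions, mais elle suit, pour le point essentiel, une route diff\'erente de celle du texte. Les deux d\'emonstrations partent de la m\^eme d\'ecomposition $\Delta(\Leg\pref)=\Delta(\Leg\F)\cup\Tang(\Leg\ell,\Leg\F)$; la divergence porte sur l'identification de $\Tang(\Leg\ell,\Leg\F)$. Le texte traite le cas non invariant en citant l'argument de Beltr\'an, et, dans le cas invariant, raisonne globalement dans $\pd$ sur les composantes irr\'eductibles de $\Tang(\Leg\ell,\Leg\F)$: chacune doit \^etre invariante par $\Leg\F$ (sinon une droite g\'en\'erique issue de $\check{\ell}$, qui est singulier pour $\Leg\F$ puisque $\ell$ est $\F$-invariante, aurait au moins deux points de tangence avec un tissu de degr\'e $1$, ce qui est exclu), donc est une droite passant par $\check{\ell}$, donc de la forme $\check{s}$ avec $s\in\Sigma_{\F}^{\ell}.$ Vous, au contraire, calculez $\Tang(\Leg\ell,\Leg\F)$ par le r\'esultant des deux \'equations implicites et retraduisez chaque racine commune en un point de tangence $m_0\in\ell\cap D_{(p,q)}$ dans le plan primal, ce qui traite les deux cas par le m\^eme m\'ecanisme et redonne au passage une preuve directe du cas non invariant sans citation externe. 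L'argument du texte est plus intrins\`eque et exploite la rigidit\'e des tissus de degr\'e $1$; le v\^otre est plus \'el\'ementaire et uniforme, au prix d'une comptabilit\'e d\'ependant de la carte (racines \`a l'infini du r\'esultant, composantes de codimension $2$ comme $\check{\ell}$, passage aux adh\'erences) que vous signalez honn\^etement mais ne d\'etaillez pas; ces v\'erifications sont routini\`eres et ne constituent pas une lacune de fond. Notez d'ailleurs que le point $\check{\ell}$ que vous \'ecartez comme \'etant de codimension $2$ est de toute fa\c{c}on contenu dans $\check{\Sigma}_{\F}^{\ell}$, puisque toute droite invariante porte au moins une singularit\'e de $\F$.
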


\begin{proof}[\sl Proof]
We have
\begin{align*}
\Delta(\Leg\mathscr{F})=\Delta(\Leg\F)\cup\Tang(\Leg\ell,\Leg\F).
\end{align*}
When $\ell$ is not invariant by $\F,$ we obtain by an argument of~\cite[page~33]{Bel14} that $$\Tang(\Leg\ell,\Leg\F)=\mathcal{G}_{\F}(\ell)\cup\check{\Sigma}_{\F}^{\ell}.$$

\noindent Let us assume that $\ell$ is invariant by $\F$ and show that $\Tang(\Leg\ell,\Leg\F)=\check{\Sigma}_{\F}^{\ell}.$ Let $s\in\Sigma_{\F}^{\ell}.$ The fact that $s\in\ell$ (resp.~$s\in\Sing\F$) implies that the line $\check{s}$ dual to $s$ is invariant by $\Leg\ell$ (resp. by $\Leg\F$). Thus~$\check{s}\subset\Tang(\Leg\ell,\Leg\F)$, hence $\check{\Sigma}_{\F}^{\ell}\subset\Tang(\Leg\ell,\Leg\F).$ Conversely, let $\mathcal{C}$ be an irreducible component of~$\Tang(\Leg\ell,\Leg\F).$ Let us show that $\mathcal{C}$ is invariant by $\Leg\F.$ Assume by contradiction that $\mathcal{C}$ is transverse to $\Leg\F.$ Let $m$ be a generic point of $\mathcal{C}.$ Denote by $\check{\ell}\in\pd$ the dual point of $\ell$; then the line $(\check{\ell}m)$ is not invariant by $\Leg\F$ and is tangent to $\Leg\F$ at $m.$ Since $\ell$ is $\F$-invariant, the point $\check{\ell}$ is singular for $\Leg\F$; it~is~therefore also a tangency point between $\Leg\F$ and $(\check{\ell}m).$ The number of tangency points between $\Leg\F$ and $(\check{\ell}m)$ is then $\geq2$, which contradicts the equality $\deg(\Leg\F)=1.$ Hence the invariance of $\mathcal{C}$ by $\Leg\F$ is proved. Then $\mathcal{C}$ is also invariant by $\Leg\ell$ and is therefore a line passing through $\check{\ell}.$ There therefore exists $s\in\Sing\F$ such that $\mathcal{C}=\check{s}$; since $\check{\ell}\in\mathcal{C}$, we have $s\in \ell$ and therefore $s\in\Sigma_{\F}^{\ell}.$ Consequently, $\mathcal{C}\subset\check{\Sigma}_{\F}^{\ell}.$
\end{proof}
\smallskip

\noindent We will now apply the above lemma to the case of a homogeneous pre-foliation $\preh=\ell\boxtimes\mathcal{H}$ of co-degree~$1$ on~$\pp.$ If $\deg\preh=d$, the homogeneous foliation $\mathcal{H}$ is given, for a suitable choice of affine coordinates $(x,y),$ by a homogeneous $1$-form
\[
\hspace{1mm}\omega=A(x,y)\mathrm{d}x+B(x,y)\mathrm{d}y,\quad \text{where}\hspace{1.5mm}A,B\in\mathbb{C}[x,y]_{d-1}\hspace{1.5mm}\text{with}\hspace{1.5mm}\gcd(A,B)=1.
\]
If $\ell=L_\infty$ then $\ell$ is invariant by $\mathcal{H}$ and Lemma~\ref{lem:Delta-Leg-ell-F} ensures that
\[
\Delta(\Leg\preh)=\Delta(\Leg\mathcal{H})\cup\check{\Sigma}_{\mathcal{H}}^{\infty},
\]
where $\check{\Sigma}_{\mathcal{H}}^{\infty}$ denotes the set of lines dual to the points of $\Sigma_{\mathcal{H}}^{\infty}:=\Sing\mathcal{H}\cap L_\infty.$
\smallskip

\noindent Assume that $\ell$ passes through the origin. If $\ell$ is not invariant by $\mathcal{H}$, then, according to \cite[Proposition~2.2]{BM18Bull}, we have $\Sigma_{\mathcal{H}}^{\ell}=\{O\}.$ Since the line $\check{O}$ dual to $O$ is contained in $\Delta(\Leg\mathcal{H})$ by \cite[Lemma~3.2]{BM18Bull}, it follows from Lemma~\ref{lem:Delta-Leg-ell-F} that
\[
\Delta(\Leg\preh)=\Delta(\Leg\mathcal{H})\cup\G_{\mathcal{H}}(\ell).
\]

\noindent If $\ell$ is invariant by $\mathcal{H}$, then the point $s:=L_\infty\cap\ell$ is singular for $\mathcal{H}$ and, by \cite[Proposition~2.2]{BM18Bull}, we have $\Sigma_{\mathcal{H}}^{\ell}=\{O,s\}$. Denoting by $\check{s}$ the dual line of the point $s$, the inclusion $\check{O}\subset\Delta(\Leg\mathcal{H})$ and Lemma~\ref{lem:Delta-Leg-ell-F} imply~that
\[
\Delta(\Leg\preh)=\Delta(\Leg\mathcal{H})\cup\check{s}.
\]

\noindent According to \cite[Lemma~3.2]{BM18Bull}, the discriminant of $\Leg\mathcal{H}$ decomposes as
\[
\Delta(\Leg\mathcal{H})=\G_{\mathcal{H}}(\ItrH)\cup\radHd\cup\check{O},
\]
where $\ItrH$\label{not:Inflex-Transverse-F} denotes the transverse inflection divisor of $\mathcal{H}$ and $\radHd$ is the set of lines dual to the radial singularities of $\mathcal{H}$ (\emph{see}~\cite[\S1.3]{BM18Bull} for precise definitions of these notions). Recall however that to the homogeneous foliation $\mathcal{H}$ one can also associate the rational map $\Gunderline_{\mathcal{H}}\hspace{1mm}\colon\mathbb{P}^{1}_{\mathbb{C}}\rightarrow \mathbb{P}^{1}_{\mathbb{C}}$ defined by
\[
\Gunderline_{\mathcal{H}}([y:x])=[-A(x,y):B(x,y)],
\]
and that this map allows us to completely determine the divisor $\ItrH$ and the set $\radH$ (\emph{see}~\cite[Section~2]{BM18Bull}):
\begin{itemize}
\item [$\bullet$] $\radH$ consists of $[b:a:0]\in L_{\infty}$ such that $[a:b]\in\mathbb{P}^{1}_{\mathbb{C}}$ is a fixed critical point of $\Gunderline_{\mathcal{H}}$;

\item [$\bullet$] $\ItrH=\prod\limits_{i}T_{i}^{n_i}$, where $T_{i}=(b_i\hspace{0.2mm}y-a_i\hspace{0.2mm}x=0)$ and $[a_i:b_i]\in\mathbb{P}^{1}_{\mathbb{C}}$ is a non-fixed critical point of $\Gunderline_{\mathcal{H}}$ of multiplicity~$n_i.$
\end{itemize}
\vspace{2mm}

\noindent From the above considerations, we deduce the following lemma.
\begin{lem}\label{lem:Delta-Leg-H}
{\sl
Let $\preh=\ell\boxtimes\mathcal{H}$ be a homogeneous pre-foliation of co-degree $1$ on $\pp.$

\noindent\textbf{\textit{1.}} If $\ell=L_\infty$ then
\begin{align*}
\Delta(\Leg\preh)=\Delta(\Leg\mathcal{H})\cup\check{\Sigma}_{\mathcal{H}}^{\infty}=\G_{\mathcal{H}}(\ItrH)\cup\check{\Sigma}_{\mathcal{H}}^{\infty}\cup\check{O}.
\end{align*}

\noindent\textbf{\textit{2.}} If the line $\ell$ passes through the origin, then
\[
\Delta(\Leg\preh)=\Delta(\Leg\mathcal{H})\cup D_\ell=\G_{\mathcal{H}}(\ItrH)\cup\radHd\cup\check{O}\cup D_\ell,
\]
where the component $D_\ell$ is defined as follows. If $\ell$ is invariant by $\mathcal{H}$, then $D_\ell:=\check{s}$ is the dual line of the point $s=L_\infty\cap\ell\in\Sing\mathcal{H}.$ If $\ell$ is not invariant by $\mathcal{H}$, then $D_\ell:=\G_{\mathcal{H}}(\ell).$
}
\end{lem}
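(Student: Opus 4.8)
The plan is to assemble the structural facts established immediately before the statement, feeding the relevant part of Lemme~\ref{lem:Delta-Leg-ell-F} into the known decomposition of $\Delta(\Leg\mathcal{H})$. Recall that, since $\mathcal{H}$ is homogeneous, the line $L_\infty$ is always invariant by $\mathcal{H}$, and by \cite[Lemme~3.2]{BM18Bull} one has $\Delta(\Leg\mathcal{H})=\G_{\mathcal{H}}(\ItrH)\cup\radHd\cup\check{O}$. I would treat the cases $\ell=L_\infty$ and $\ell\ni O$ separately.

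For the first case, $\ell=L_\infty$ is $\mathcal{H}$-invariant, so part~\textbf{1} of Lemme~\ref{lem:Delta-Leg-ell-F} gives $\Delta(\Leg\preh)=\Delta(\Leg\mathcal{H})\cup\check{\Sigma}_{\mathcal{H}}^{\infty}$; substituting the decomposition above yields $\G_{\mathcal{H}}(\ItrH)\cup\radHd\cup\check{O}\cup\check{\Sigma}_{\mathcal{H}}^{\infty}$. The one point worth stating carefully is that the radial term is now redundant: by the characterization of $\radH$ recalled before the statement, every radial singularity of the homogeneous foliation $\mathcal{H}$ is a point $[b:a:0]\in L_\infty$, hence lies in $\Sigma_{\mathcal{H}}^{\infty}=\Sing\mathcal{H}\cap L_\infty$, so that $\radHd\subset\check{\Sigma}_{\mathcal{H}}^{\infty}$ and can be absorbed. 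This gives the announced equality $\Delta(\Leg\preh)=\G_{\mathcal{H}}(\ItrH)\cup\check{\Sigma}_{\mathcal{H}}^{\infty}\cup\check{O}$.

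For the second case, $\ell$ through the origin, I would split according to invariance. If $\ell$ is not $\mathcal{H}$-invariant, part~\textbf{2} of Lemme~\ref{lem:Delta-Leg-ell-F} directly gives $\Delta(\Leg\preh)=\Delta(\Leg\mathcal{H})\cup\G_{\mathcal{H}}(\ell)$, so $D_\ell=\G_{\mathcal{H}}(\ell)$. If $\ell$ is $\mathcal{H}$-invariant, then by \cite[Proposition~2.2]{BM18Bull} one has $\Sigma_{\mathcal{H}}^{\ell}=\{O,s\}$ with $s=L_\infty\cap\ell$, and part~\textbf{1} gives $\Delta(\Leg\preh)=\Delta(\Leg\mathcal{H})\cup\check{O}\cup\check{s}$; since $\check{O}\subset\Delta(\Leg\mathcal{H})$, this collapses to $\Delta(\Leg\mathcal{H})\cup\check{s}$, i.e. $D_\ell=\check{s}$. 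In both sub-cases $\Delta(\Leg\preh)=\Delta(\Leg\mathcal{H})\cup D_\ell$, and substituting the decomposition of $\Delta(\Leg\mathcal{H})$ produces the stated formula $\G_{\mathcal{H}}(\ItrH)\cup\radHd\cup\check{O}\cup D_\ell$.

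The argument is essentially bookkeeping built on top of Lemme~\ref{lem:Delta-Leg-ell-F} and the known decomposition of $\Delta(\Leg\mathcal{H})$; there is no serious obstacle. The only genuinely substantive observation, and the one I would emphasize, is the inclusion $\radHd\subset\check{\Sigma}_{\mathcal{H}}^{\infty}$ in the first case, which rests on the fact that the radial singularities of a homogeneous foliation all lie on the line at infinity. Everything else reduces to direct substitution and the trivial inclusion $\check{O}\subset\Delta(\Leg\mathcal{H})$.
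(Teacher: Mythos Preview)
Your argument is correct and follows exactly the route the paper takes: the lemma is obtained as a direct summary of the discussion preceding it, combining Lemme~\ref{lem:Delta-Leg-ell-F} with the decomposition $\Delta(\Leg\mathcal{H})=\G_{\mathcal{H}}(\ItrH)\cup\radHd\cup\check{O}$ from \cite[Lemme~3.2]{BM18Bull}. The inclusion $\radHd\subset\check{\Sigma}_{\mathcal{H}}^{\infty}$ that you single out is indeed the one nontrivial simplification needed for part~\textbf{1}; the paper leaves it implicit but your justification via the description of $\radH$ as points of $L_\infty$ is the right one.
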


%==================================================================================================================================================================

\section{Flatness of the dual web of a co-degree one homogeneous pre-foliation}\label{sec:etude-platitude-tissu-dual-pre-feuilletage-homogene}
\bigskip

\noindent Our first result shows that, for a homogeneous foliation $\mathcal{H}$ on $\pp,$ the webs $\Leg\mathcal{H}$ and $\Leg(L_\infty\boxtimes\mathcal{H})$ have~the~same curvature, so that we have equivalence between the flatness of $\Leg\mathcal{H}$ and that of $\Leg(L_\infty\boxtimes\mathcal{H}).$
\begin{thm}\label{thm:K-Leg-H-egale-K-Leg-L-infini-H}
{\sl
Let $d\geq3$ be an integer and let $\mathcal{H}$ be a homogeneous foliation of degree $d-1$ on $\pp.$ Then $$K(\Leg(L_\infty\boxtimes\mathcal{H}))=K(\Leg\mathcal{H}).$$ In particular, the $d$-web $\Leg(L_\infty\boxtimes\mathcal{H})$ is flat if and only if the $(d-1)$-web $\Leg\mathcal{H}$ is flat.
}
\end{thm}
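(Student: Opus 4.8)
The plan is to reduce the statement to a local computation of fundamental forms on the dense open subset of $\pd$ where the $(d-1)$-web $\Leg\mathcal{H}$ is smooth, hence completely decomposable; since $K(\Leg\mathcal{H})$ and $K(\Leg(L_\infty\boxtimes\mathcal{H}))$ are globally meromorphic $2$-forms on $\pd$, it suffices to establish their equality there. First I would make the two webs explicit. The line $L_\infty$ is dual to the point $\check{L}_\infty=[0:1:0]\in\pd$, through which all lines $p=\mathrm{const}$ pass, so $\Leg L_\infty$ is the foliation $\{\diffp=0\}$; and, as recalled before Lemme~\ref{lem:Delta-Leg-H}, $\Leg\mathcal{H}$ is the $(d-1)$-web given by $P(p,q,x)=0$ with $x=\frac{\diffq}{\diffp}$, where $P(p,q,x):=A(x,px-q)+pB(x,px-q)$. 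Writing $x_1,\dots,x_{d-1}$ for the roots in $x$ of $P$, the local components of $\Leg\mathcal{H}$ are the foliations $\mathcal{F}_i:\ \diffq-x_i\diffp=0$, and $\Leg(L_\infty\boxtimes\mathcal{H})=\mathcal{F}_1\boxtimes\cdots\boxtimes\mathcal{F}_{d-1}\boxtimes\Leg L_\infty$.

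Next I would use the additivity of the fundamental form. Splitting the sum~\eqref{equa:eta} defining $\eta$ according to whether a triple $(r,s,t)$ involves the extra foliation $\Leg L_\infty$ or not gives, with $\eta_{rs}:=\eta(\mathcal{F}_r\boxtimes\mathcal{F}_s\boxtimes\Leg L_\infty)$,
\[
\eta\big(\Leg(L_\infty\boxtimes\mathcal{H})\big)=\eta(\Leg\mathcal{H})+\sum_{1\le r<s\le d-1}\eta_{rs},
\]
whence $K(\Leg(L_\infty\boxtimes\mathcal{H}))=K(\Leg\mathcal{H})+\mathrm{d}\!\sum_{r<s}\eta_{rs}$. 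It therefore remains to prove that $\sum_{r<s}\eta_{rs}$ is closed. I would compute each $\eta_{rs}$ from the system~\eqref{equa:eta-rst} applied to $\omega_1=\diffq-x_r\diffp$, $\omega_2=\diffq-x_s\diffp$ and $\omega_3=\diffp$. One finds $\delta_{23}=-1$, $\delta_{31}=1$ and $\delta_{12}=x_s-x_r$, and solving~\eqref{equa:eta-rst} for $\eta_{rs}=E\,\diffp+G\,\diffq$ yields
\[
G=\frac{1}{x_s-x_r}\Big(\frac{\partial x_s}{\partial q}-\frac{\partial x_r}{\partial q}\Big),\qquad E=\frac{\partial x_r}{\partial q}-G\,x_r.
\]

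The decisive point is the homogeneity of $\mathcal{H}$. Since $A,B$ are homogeneous of degree $d-1$, the relation $A(\lambda x,\lambda(px-q))=\lambda^{d-1}A(x,px-q)$ (and likewise for $B$) gives $P(p,\lambda q,\lambda x)=\lambda^{d-1}P(p,q,x)$; thus the zero locus of $P$ in the variables $(q,x)$ is a cone and the roots are homogeneous of degree $1$ in $q$, i.e. $x_i(p,q)=q\,\phi_i(p)$ for suitable functions $\phi_i$ of $p$ alone. Substituting $\frac{\partial x_i}{\partial q}=\phi_i$ and $x_s-x_r=q(\phi_s-\phi_r)$ into the formulas above I obtain $G=\frac1q$ and $E=\phi_r-\frac1q\,q\phi_r=0$, so that $\eta_{rs}=\frac{\diffq}{q}$ for every pair $r<s$. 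Consequently $\sum_{r<s}\eta_{rs}=\binom{d-1}{2}\frac{\diffq}{q}$ is a closed logarithmic $1$-form, so $\mathrm{d}\sum_{r<s}\eta_{rs}=0$ and $K(\Leg(L_\infty\boxtimes\mathcal{H}))=K(\Leg\mathcal{H})$ on the open set; this extends to all of $\pd$ by meromorphy, and the equivalence of flatness is immediate. The main obstacle is precisely this last structural input: without homogeneity the $\diffp$-coefficient $E$ need not vanish, and it is the scaling symmetry $x_i=q\,\phi_i(p)$ that collapses every triple fundamental form to the closed form $\diffq/q$.
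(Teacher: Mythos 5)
Your argument is correct, but it is not the route the paper takes for its official proof. There, the theorem is deduced from Th\'eor\`eme~\ref{thm:K-FX-W-egale-K-W}: one decomposes $\Leg(L_\infty\boxtimes\mathcal{H})=\Leg(L_\infty)\boxtimes\W_{d-1}$ with $\W_{d-1}=\F_1\boxtimes\cdots\boxtimes\F_{d-1}$, $\F_i\colon\lambda_i(p)\diffq-q\diffp=0$, observes that the vector field $\mathrm{X}=q\frac{\partial}{\partial q}$ defining $\Leg(L_\infty)$ is a transverse symmetry of $\W_{d-1}$, and concludes because each $3$-subweb $\Leg(L_\infty)\boxtimes\F_i\boxtimes\F_j$ is then of maximal rank $1$ by the Mar\'{\i}n--Pereira--Pirio theorem, hence flat by the Mih\u{a}ileanu criterion. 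What you do instead is the elementary computation that the paper records only as an alternative in the Remarque following its proof: your $\eta_{rs}=\frac{\diffq}{q}$ agrees with the paper's $\eta(\W_3^{i,j})=\frac{\mathrm{d}((\lambda_i\lambda_j)(p))}{(\lambda_i\lambda_j)(p)}+\frac{\diffq}{q}$ up to the allowed closed logarithmic ambiguity, which here comes from your normalization $\diffq-x_i\diffp=\frac{1}{\lambda_i(p)}\bigl(\lambda_i(p)\diffq-q\diffp\bigr)$; either way $K(\W_3^{i,j})=0$ and the conclusion follows. Note that the structural input you isolate, $x_i(p,q)=q\,\phi_i(p)$, is exactly the statement that $q\frac{\partial}{\partial q}$ preserves each $\F_i$ while being transverse to it, so both proofs rest on the same consequence of homogeneity: the paper's route buys a conceptual explanation and the parallel with rank-maximal webs, while yours buys a short self-contained verification using only the defining system~(\ref{equa:eta-rst}) and the meromorphic extension of $K$ from the complement of $\Delta(\Leg\mathcal{H})$.
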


\begin{cor}\label{cor:Leg-L-infini-H2-plat}
{\sl Let $\mathcal{H}$ be a homogeneous foliation of degree $2$ on $\pp.$ Then the $3$-web $\Leg(L_\infty\boxtimes\mathcal{H})$~is~flat.}
\end{cor}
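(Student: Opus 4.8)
The plan is to read this off directly from Theorem~\ref{thm:K-Leg-H-egale-K-Leg-L-infini-H}, the only extra ingredient being the elementary fact that every $2$-web has vanishing curvature. First I would specialize Theorem~\ref{thm:K-Leg-H-egale-K-Leg-L-infini-H} to $d=3$: since $\mathcal{H}$ is a homogeneous foliation of degree $2=d-1$, the hypotheses of that theorem are satisfied and it yields
\[
K(\Leg(L_\infty\boxtimes\mathcal{H}))=K(\Leg\mathcal{H}).
\]
It therefore suffices to prove that $K(\Leg\mathcal{H})=0$.

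Now $\Leg\mathcal{H}$ is a $2$-web, because the Legendre transform of a foliation of degree $2$ is a $2$-web of degree $1$ on $\pd$. The key observation is that the fundamental form of any $2$-web is identically zero: by the definition recalled in~\S\ref{subsec:courbure-platitude}, $\eta(\mathcal{W})$ is the sum of the meromorphic $1$-forms $\eta_{rst}$ over all triples $1\le r<s<t\le d$, and for $d=2$ there is no such triple, so the sum is empty. Hence $\eta(\Leg\mathcal{H})=0$ and consequently $K(\Leg\mathcal{H})=\mathrm{d}\,\eta(\Leg\mathcal{H})=0$.

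The only point requiring a little care is that $\Leg\mathcal{H}$ need not be completely decomposable, so that the empty-sum description of $\eta$ applies literally only after passing to the ramified \textsc{Galois} covering that splits the web into two foliations $\mathcal{F}_1\boxtimes\mathcal{F}_2$. On that covering the fundamental form is again an empty sum, hence zero, and by invariance under the \textsc{Galois} action it descends to the zero $1$-form on $\pd$; since the curvature is compatible with pullback by dominant maps, $K(\Leg\mathcal{H})=0$ follows. Combining this with the displayed equality gives $K(\Leg(L_\infty\boxtimes\mathcal{H}))=0$, i.e. the $3$-tissu $\Leg(L_\infty\boxtimes\mathcal{H})$ is flat. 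I do not anticipate any genuine obstacle here: the statement is a direct corollary, its entire content being the triviality of the curvature of a $2$-web together with the already-established Theorem~\ref{thm:K-Leg-H-egale-K-Leg-L-infini-H}.
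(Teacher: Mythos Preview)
Your argument is correct and is exactly the reasoning intended by the paper: the corollary is stated immediately after Theorem~\ref{thm:K-Leg-H-egale-K-Leg-L-infini-H} with no separate proof, because for $d=3$ that theorem reduces the question to the flatness of the $2$-web $\Leg\mathcal{H}$, which is automatic since $\eta$ is an empty sum. Your remark on the \textsc{Galois} covering is a harmless extra precaution; the paper's definition of $\eta(\mathcal{W})$ already incorporates this passage.
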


\noindent To establish Theorem~\ref{thm:K-Leg-H-egale-K-Leg-L-infini-H}, we will need the following definition and theorem.
\begin{defin}[\cite{MPP06}]
Let $\W=\F_1\boxtimes\cdots\boxtimes\F_d$ be a regular $d$-web on~$(\C^2,0).$ A {\sl transverse symmetry} of~$\W$ is a germ of vector field $\mathrm{X}$ which is transverse to the foliations $\F_i$ ($i=1,\ldots,d$) and whose local flow $\exp(t\mathrm{X})$ preserves the $\F_i$'s.
\end{defin}

\begin{thm}\label{thm:K-FX-W-egale-K-W}
{\sl Let $d\geq3$ be an integer and let $\W_{d-1}$ be a regular $(d-1)$-web on $(\C^2,0)$ which admits a~transverse~symmetry~$\mathrm{X}.$ Denote by $\F_\mathrm{X}$ the foliation defined by $\mathrm{X}.$ Then $$K(\F_\mathrm{X}\boxtimes\W_{d-1})=K(\W_{d-1}).$$ In particular, the $d$-web $\F_\mathrm{X}\boxtimes\W_{d-1}$ is flat if and only if the $(d-1)$-web $\W_{d-1}$ is flat.
}
\end{thm}

\noindent Before proving this theorem, let us briefly recall the definition of the rank $\mathrm{rk}(\W)$ of a regular $d$-web $\W=\F_1\boxtimes\cdots\boxtimes\F_d$ on $(\C^2,0).$ For $1\leq i\leq d,$ let $\omega_i$ be a $1$-form defining the foliation $\F_i.$ One defines the $\C$-vector space $\mathcal{A}(\W)$ of {\sl abelian relations} of $\W$ by
\begin{align*}
\mathcal{A}(\W):=\Big\{(\eta_1,\ldots,\eta_d)\in(\Omega^1(\C^2,0))^d
\hspace{1mm}\Big|\hspace{1mm}
\forall i=1,\ldots,d,\hspace{1mm}
\mathrm{d}\eta_i=0,\hspace{1mm}
\eta_i\wedge\omega_i=0
\hspace{1mm}\text{ and }\hspace{1mm}
\sum_{i=1}^d\eta_i=0\Big\}.
\end{align*}
Then $\mathrm{rk}(\W):=\dim_{\C}\mathcal{A}(\W)$.\label{not:rk-W} One has the following optimal bound (\emph{cf.} \cite[Chapter~2]{PP15}):
\[
\mathrm{rk}(\W)\leq\pi_d:=\frac{(d-1)(d-2)}{2}.
\]
Recall also that every $d$-web of maximal rank ({\it i.e.} of rank $\pi_d$) is necessarily flat by \textsc{Mih\u{a}ileanu}'s criterion (\emph{cf.}~\cite[Theorem~6.3.4]{PP15}).

\noindent In fact, Theorem~\ref{thm:K-FX-W-egale-K-W} is an analogue for flat webs of a result on webs of maximal rank, due to~\textsc{Mar\'{\i}n}-\textsc{Pereira}-\textsc{Pirio}, namely:
\begin{thm}[\cite{MPP06}, \rm{Theorem~1}]\label{thm:Marin-Pereira-Pirio}
{\sl With the notations of Theorem~\ref{thm:K-FX-W-egale-K-W}, one has $$\mathrm{rk}(\F_\mathrm{X}\boxtimes\W_{d-1})=\mathrm{rk}(\W_{d-1})+(d-2).$$ In particular, $\F_\mathrm{X}\boxtimes\W_{d-1}$ is of maximal rank if and only if $\W_{d-1}$ is of maximal rank.
}
\end{thm}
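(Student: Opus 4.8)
The plan is to prove the rank formula directly, by reducing the abelian relations of both webs to functional equations in a flow box of the transverse symmetry $\mathrm{X}$. First I would rectify $\mathrm{X}$: choosing local coordinates $(x,t)$ with $\mathrm{X}=\partial_t$, the foliation $\F_\mathrm{X}$ is defined by $\mathrm{d}x$, with first integral $u_0=x$. Transversality forces $i_\mathrm{X}\omega_i\neq0$, so after the normalization $i_\mathrm{X}\omega_i=1$ the invariance condition $\mathcal{L}_\mathrm{X}\omega_i=0$ holds (apply $i_\mathrm{X}$ to $\mathcal{L}_\mathrm{X}\omega_i=g_i\omega_i$ and use $i_\mathrm{X}\omega_i\equiv1$); hence each $\F_i$ ($1\leq i\leq d-1$) is defined by a closed $1$-form $\omega_i=\mathrm{d}t-f_i(x)\,\mathrm{d}x$ with first integral $u_i=t-F_i(x)$, where $F_i'=f_i$.

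Next I would write down the two abelian-relation spaces explicitly. A closed $1$-form tangent to $\F_\mathrm{X}$ (resp. to $\F_i$) is $g_0(x)\,\mathrm{d}x$ (resp. $g_i(u_i)\,\mathrm{d}u_i$), so that
\[
\mathcal{A}(\F_\mathrm{X}\boxtimes\W_{d-1})\cong\Big\{(g_0,g_1,\dots,g_{d-1}):g_0(x)\,\mathrm{d}x+\sum_{i=1}^{d-1}g_i(u_i)\,\mathrm{d}u_i=0\Big\}.
\]
Splitting this identity into its $\mathrm{d}t$- and $\mathrm{d}x$-parts gives $\sum_{i\geq1}g_i(u_i)=0$ together with $g_0(x)=\sum_{i\geq1}f_i(x)\,g_i(u_i)$. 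The crucial observation is that $\partial_t\big(\sum_i f_i g_i(u_i)\big)=\sum_i f_i g_i'(u_i)=-\partial_x\big(\sum_i g_i(u_i)\big)$, so once the first equation holds, the second automatically produces a $g_0$ depending on $x$ alone; thus $g_0$ is determined and
\[
\mathcal{A}(\F_\mathrm{X}\boxtimes\W_{d-1})\cong V:=\Big\{(g_1,\dots,g_{d-1}):\textstyle\sum_{i=1}^{d-1}g_i(u_i)=0\Big\},
\]
whereas the same reduction for $\W_{d-1}$ alone retains both constraints,
\[
\mathcal{A}(\W_{d-1})\cong W:=\Big\{(h_1,\dots,h_{d-1}):\textstyle\sum_i h_i(u_i)=0\ \text{and}\ \sum_i f_i(x)\,h_i(u_i)=0\Big\}.
\]

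Then I would compare $V$ and $W$ through the differentiation map $D\colon V\to W$, $(g_i)\mapsto(g_i')$. Differentiating the single relation defining $V$ with respect to $t$ and to $x$ shows exactly that $(g_i')$ satisfies the two relations defining $W$, so $D$ is well defined; its kernel consists of the tuples of constants summing to zero, hence $\dim_\C\ker D=d-2$. Surjectivity is the analytic heart: given $(h_i)\in W$, let $g_i$ be any primitive of $h_i$; then $\mathrm{d}\big(\sum_i g_i(u_i)\big)=\sum_i h_i(u_i)\,\mathrm{d}u_i=0$ by the two $W$-relations, so $\sum_i g_i(u_i)$ is a constant which can be absorbed into one additive constant to land in $V$ without changing the $g_i'$. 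Rank--nullity then yields $\dim V=\dim W+(d-2)$, that is $\mathrm{rg}(\F_\mathrm{X}\boxtimes\W_{d-1})=\mathrm{rg}(\W_{d-1})+(d-2)$, and the maximal-rank statement follows at once from $\pi_d-\pi_{d-1}=d-2$.

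I expect the main obstacle to be the rectification step: justifying that the transverse symmetry can be straightened to $\partial_t$ with $t$-independent leaf slopes $f_i(x)$, which is where the hypothesis that the flow of $\mathrm{X}$ preserves each $\F_i$ is consumed, together with verifying that the normalization $i_\mathrm{X}\omega_i=1$ is compatible with the regularity of the web. Once the normal form is in place, the two bijections are formal and the surjectivity of $D$ is routine, the only delicate point being the bookkeeping of additive constants, which is precisely what guarantees that $D$ is onto $W$ rather than onto a proper subspace.
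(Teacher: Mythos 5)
Your proposal is correct, but note that the paper itself contains no proof of this statement: it is quoted verbatim from [MPP06] (Th\'eor\`eme~1) and used as a black box to prove the curvature analogue (Th\'eor\`eme~\ref{thm:K-FX-W-egale-K-W}). What you have written is therefore a genuine, self-contained proof rather than an alternative to an internal argument, and it is sound at every step. The rectification is legitimate: $\mathrm{X}$ is nonvanishing (being transverse to the $\F_i$), so the flow-box theorem gives $\mathrm{X}=\partial_t$; after dividing by $i_{\mathrm{X}}\omega_i\neq0$, the identity $i_{\mathrm{X}}\mathcal{L}_{\mathrm{X}}=\mathcal{L}_{\mathrm{X}}i_{\mathrm{X}}$ indeed kills the factor $g_i$ in $\mathcal{L}_{\mathrm{X}}\omega_i=g_i\,\omega_i$, yielding the translation-web normal form $\omega_i=\mathrm{d}t-f_i(x)\,\mathrm{d}x=\mathrm{d}u_i$ with $u_i=t-F_i(x)$. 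The identifications $\mathcal{A}(\F_{\mathrm{X}}\boxtimes\W_{d-1})\cong V$ and $\mathcal{A}(\W_{d-1})\cong W$ are exact as you state them: closedness forces each component to be $g_i(u_i)\,\mathrm{d}u_i$ (resp. $g_0(x)\,\mathrm{d}x$), and your key computation $\partial_t\bigl(\sum_i f_i\,g_i(u_i)\bigr)=-\partial_x\bigl(\sum_i g_i(u_i)\bigr)=0$ correctly shows that $g_0$ is determined by $(g_1,\dots,g_{d-1})$, so only the single relation $\sum_i g_i(u_i)=0$ survives. The differentiation map $D\colon V\to W$ is well defined (differentiate the $V$-relation in $t$ and in $x$), its kernel is the $(d-2)$-dimensional space of constant tuples with zero sum, and surjectivity holds because the two $W$-relations say precisely that $\sum_i g_i(u_i)$ is closed, hence a constant absorbable into one primitive; finite-dimensionality of $W$ (the bound $\pi_{d-1}$ recalled in the paper) makes rank--nullity applicable, and $\pi_d-\pi_{d-1}=d-2$ gives the maximal-rank equivalence. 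This flow-box reduction to functional equations is essentially the classical mechanism behind the original argument of Mar\'{\i}n--Pereira--Pirio, so your route is the expected one; the only points deserving a word in a polished write-up are the normalizations you already flag (base points $u_i(0)$ for the germs $g_i$, and the bookkeeping of additive constants ensuring $D$ is onto all of $W$).
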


\noindent The proof of Theorem~\ref{thm:K-FX-W-egale-K-W} consists essentially in applying this result for $d=3.$
\begin{proof}[\sl Proof of Theorem~\ref{thm:K-FX-W-egale-K-W}]
Writing $\W_{d-1}=\F_1\boxtimes\cdots\boxtimes\F_{d-1}$, we have
\begin{align*}
K(\F_\mathrm{X}\boxtimes\W_{d-1})=K(\W_{d-1})+\sum\limits_{1\leq i<j\leq d-1}K(\W_{3}^{i,j}),
\end{align*}
where $\W_{3}^{i,j}:=\F_\mathrm{X}\boxtimes\F_i\boxtimes\F_j.$ Moreover, since $\mathrm{X}$ is a transverse symmetry of the $2$-web $\F_i\boxtimes\F_j$ and since every $2$-web is of maximal rank, equal to~$0$, Theorem~1~of~\cite{MPP06} (\emph{cf.}~Theorem~\ref{thm:Marin-Pereira-Pirio} above) implies that the $3$-web $\W_{3}^{i,j}$ is of maximal rank, equal to $1,$ so that $K(\W_{3}^{i,j})\equiv0$, hence the announced equality holds.
\end{proof}

\begin{proof}[\sl Proof of Theorem~\ref{thm:K-Leg-H-egale-K-Leg-L-infini-H}]
By~\cite[Section~2]{BM18Bull}, we can locally decompose the $d$-web $\Leg(L_\infty\boxtimes\mathcal{H})$ as $$\Leg(L_\infty\boxtimes\mathcal{H})=\Leg(L_\infty)\boxtimes\W_{d-1},$$ where $\W_{d-1}=\F_1\boxtimes\cdots\boxtimes\F_{d-1}$ and, for any $i\in\{1,\ldots,d-1\},$ $\F_i$ is given by $\check{\omega}_i:=\lambda_{i}(p)\mathrm{d}q-q\mathrm{d}p$, with $\lambda_{i}(p)=p-p_{i}(p)$ and $\{p_{i}(p)\}=\Gunderline_{\mathcal{H}}^{-1}(p).$ Now, the vector field $\mathrm{X}:=q\frac{\partial }{\partial q}$ defines the radial foliation $\Leg(L_\infty)$ and is a transverse symmetry of the web $\W_{d-1}.$ Therefore, $K(\Leg(L_\infty\boxtimes\mathcal{H}))=K(\Leg\mathcal{H})$ by Theorem~\ref{thm:K-FX-W-egale-K-W}.
\end{proof}

\begin{rem}
We can also prove Theorem~\ref{thm:K-Leg-H-egale-K-Leg-L-infini-H} directly, without using results on webs of maximal rank. Indeed, putting $\W_{3}^{i,j}:=\Leg(L_\infty)\boxtimes\F_i\boxtimes\F_j$, for all $i,j\in\{1,\ldots,d-1\}$ with $i\neq j,$ we have
\begin{align*}
K(\Leg(L_\infty\boxtimes\mathcal{H}))=K(\Leg\mathcal{H})+\sum\limits_{1\leq i<j\leq d-1}K(\W_{3}^{i,j}).
\end{align*}
The foliation $\Leg(L_\infty)$ being defined by $\check{\omega}_0:=\mathrm{d}p,$ a direct computation using formula~(\ref{equa:eta-rst}) shows that
\begin{align*}
\eta(\W_{3}^{i,j})=\frac{\mathrm{d}\Big((\lambda_{i}\lambda_{j})(p)\Big)}{(\lambda_{i}\lambda_{j})(p)}+\frac{\mathrm{d}q}{q},
\end{align*}
so that $K(\W_{3}^{i,j})=\mathrm{d}\eta(\W_{3}^{i,j})\equiv0,$ hence $K(\Leg(L_\infty\boxtimes\mathcal{H}))=K(\Leg\mathcal{H}).$
\end{rem}

%------------------------------------------------------------------------------------------------------------------------------------------------------------------------
\noindent The following theorem gives an important characterization of the flatness of the dual web of a co-degree one homogeneous pre-foliation.
\begin{thm}\label{thm:holomorphie-courbure-G(I^tr)-D-ell}
{\sl Let $\preh=\ell\boxtimes\mathcal{H}$ be a homogeneous pre-foliation of type $(1,d)$ on $\pp$ with $d\geq3.$ If~the~line~$\ell$~is~invariant (resp. not invariant) by $\mathcal{H}$, then the $d$-web $\Leg\preh$ is flat if and only if its curvature $K(\Leg\preh)$ is holomorphic on $\G_{\mathcal{H}}(\ItrH)$ (resp. on $\G_{\mathcal{H}}(\ItrH)\cup D_{\ell}=\G_{\mathcal{H}}(\ItrH\cup\ell)$).
}
\end{thm}

\noindent To prove this theorem, we will need the following lemma, which is a reformulation of Lemma~3.1 of~\cite{BFM14} in~terms of homogeneous pre-foliations.
\begin{lem}[\cite{BFM14}, \rm{Lemma~3.1}]\label{lem:holomo-O}
{\sl Let $\preh$ be a homogeneous pre-foliation on $\pp.$ If the curvature of $\Leg\preh$ is holomorphic on $\pd\hspace{-0.3mm}\setminus\hspace{-0.3mm} \check{O},$ then $\Leg\preh$ is flat.}
\end{lem}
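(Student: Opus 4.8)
The plan is to exploit the general criterion for flatness of a web on $\pp$ recalled at the end of \S\ref{subsec:courbure-platitude}: a $d$-web $\W$ on $\pp$ is flat as soon as its curvature $K(\W)$ is holomorphic along the generic points of every irreducible component of its discriminant $\Delta(\W)$. Since $K(\Leg\preh)$ is automatically holomorphic off $\Delta(\Leg\preh)$, it suffices to control the behaviour of the curvature along each component of the discriminant. The hypothesis already gives holomorphy everywhere on $\pd\setminus\check{O}$, so the only component that could obstruct flatness is the dual line $\check{O}$ of the origin. The whole content of the lemma is therefore to show that, for a homogeneous pre-foliation, holomorphy of $K(\Leg\preh)$ along the generic point of $\check{O}$ is \emph{free}.

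First I would use the homogeneity of $\preh=\ell\boxtimes\mathcal{H}$. The invariance of $\preh$ under the complex homothety action $(x,y)\mapsto\lambda(x,y)$ on the chosen affine chart of $\pp$ induces, on the dual $\pd$, a one-parameter group of symmetries of the web $\Leg\preh$. Concretely, in the affine chart $(p,q)$ of $\pd$ dual to the lines $\{y=px-q\}$, the homothety $(x,y)\mapsto\lambda(x,y)$ acts by $(p,q)\mapsto(p,\lambda q)$, so that the vector field $\X=q\,\py[q]$ (that is, $q\frac{\partial}{\partial q}$, exactly the field appearing in the proof of Th\'eor\`eme~\ref{thm:K-Leg-H-egale-K-Leg-L-infini-H}) generates a flow of symmetries of $\Leg\preh$. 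The key geometric point is that the fixed-point locus of this $\mathbb{C}^*$-action on $\pd$ is precisely the dual line $\check{O}$ together with the line at infinity: indeed $\check{O}=\{q=0\}$ in this chart is exactly where the flow of $\X=q\frac{\partial}{\partial q}$ is stationary.

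Next I would invoke the invariance of the curvature under symmetries. Since the curvature is canonically associated to the web and pulls back under dominant holomorphic maps (the functoriality $K(\varphi^*\W)=\varphi^*K(\W)$ stated in \S\ref{subsec:courbure-platitude}), the $2$-form $K(\Leg\preh)$ is invariant under the pullback by each $\exp(t\X)$. A meromorphic $2$-form on a neighbourhood of the generic point of $\check{O}=\{q=0\}$ that is invariant under the scaling flow $q\mapsto e^t q$ cannot have a genuine pole along $\{q=0\}$ unless that pole were forced by the web itself; but the order-of-vanishing/pole analysis shows that invariance under $q\mapsto e^t q$ together with holomorphy on the complement $\{q\neq 0\}$ (guaranteed by the hypothesis on $\pd\setminus\check{O}$) propagates holomorphy across $\{q=0\}$ at its generic point. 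In other words, the transverse symmetry $\X$ transports holomorphy from $\pd\setminus\check{O}$ onto the generic point of $\check{O}$, so $K(\Leg\preh)$ is in fact holomorphic along the generic point of every irreducible component of $\Delta(\Leg\preh)$.

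The main obstacle, and the step that must be executed carefully, is this last propagation argument: one has to verify that the symmetry field $\X$ is genuinely \emph{transverse} to $\Leg\preh$ along the generic point of $\check{O}$ (so that no residual pole can be created by a leaf tangent to the flow), and that the only fixed divisor of the flow meeting $\Delta(\Leg\preh)$ is $\check{O}$ itself. Once transversality along the generic point of $\check{O}$ is established, the invariance $K=(\exp t\X)^*K$ forces the polar part along $\check{O}$ to be simultaneously scale-invariant and holomorphic off $\{q=0\}$, which leaves only the holomorphic possibility. Concluding, by the criterion recalled above, $K(\Leg\preh)\equiv 0$, i.e.\ $\Leg\preh$ is flat. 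I would present the transversality verification as the heart of the proof and merely cite \cite[Lemme~3.1]{BFM14} for the precise local computation, since the statement is explicitly a reformulation of that lemma in the language of homogeneous pre-foliations.
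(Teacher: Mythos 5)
Your overall strategy --- reduce everything to the behaviour of $K(\Leg\preh)$ along the single remaining component $\check{O}$, and exploit the $\mathbb{C}^*$-action $(p,q)\mapsto(p,\lambda q)$ induced on $\pd$ by the homogeneity, whose fixed divisor in the affine chart is precisely $\check{O}=\{q=0\}$ --- is the right one, and it is indeed the idea behind Lemme~3.1 de \cite{BFM14}, which the paper simply cites without reproving. But the step you present as the conclusion is false as stated: invariance of a meromorphic $2$-forme under the flow of $q\frac{\partial}{\partial q}$, together with holomorphy on $\{q\neq0\}$, does \emph{not} propagate holomorphy across $\{q=0\}$. The form $\frac{1}{q}\,\mathrm{d}p\wedge\mathrm{d}q$ is invariant under $(p,q)\mapsto(p,\lambda q)$ and holomorphic off $\{q=0\}$, yet has a pole along $\check{O}$. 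What the invariance actually yields is this: writing $K(\Leg\preh)=f(p,q)\,\mathrm{d}p\wedge\mathrm{d}q$ and expanding $f$ in a Laurent series $\sum_k a_k(p)q^k$ near a generic point of $\{q=0\}$, the relation $\lambda f(p,\lambda q)=f(p,q)$ kills every coefficient except $a_{-1}$, so that $K(\Leg\preh)=\frac{g(p)}{q}\,\mathrm{d}p\wedge\mathrm{d}q$ with $g$ entire --- a simple pole of a very particular shape, not holomorphy. The transversality verification you place at the heart of the proof does not remove this residual possibility.

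The missing, and essential, second step is global: one must show that a nonzero form of this shape cannot live on $\pd$. Either invoke that a nonzero meromorphic $2$-forme on $\pd$ whose polar divisor is supported on a line with multiplicity $m$ is a section of $\Omega^2_{\pd}(m\check{O})\cong\mathcal{O}_{\pd}(m-3)$, which has no nonzero sections for $m\leq2$; or compute directly in the chart $(u,v)=(1/p,\,q/p)$ that $\frac{g(p)}{q}\,\mathrm{d}p\wedge\mathrm{d}q=-\frac{g(1/u)}{u^{2}v}\,\mathrm{d}u\wedge\mathrm{d}v$, which is holomorphic at a generic point of the line at infinity of the chart (a line contained in $\pd\setminus\check{O}$ away from a single point, hence covered by the hypothesis) only if $g\equiv0$. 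With this global argument added, your proof closes; without it, it does not go through.
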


\noindent We will also need the following proposition, which has its own interest.
\begin{pro}\label{pro:holomorphie-courbure-F-W-nu-W-d-nu-1}
{\sl Let $\W_{\nu}$ be a germ of $\nu$-web on $(\mathbb{C}^{2},0)$ with $\nu\geq2.$ Assume that $\Delta(\W_{\nu})$ has an irreducible component $C$ totally invariant by $\W_{\nu}$ and of minimal multiplicity $\nu-1.$ Let $\F$ be a germ of foliation on $(\mathbb{C}^{2},0)$ leaving $C$ invariant and let $\W_{d-\nu-1}$ be a germ of regular $(d-\nu-1)$-web on $(\mathbb{C}^{2},0)$ transverse to $C.$ Then~the~curvature of the $d$-web $\W=\F\boxtimes\W_{\nu}\boxtimes\W_{d-\nu-1}$ is holomorphic along $C.$
}
\end{pro}

\begin{proof}[\sl Proof]
As in the beginning of the proof of~\cite[Proposition~2.6]{MP13}, we choose a local coordinate system $(U,(x,y))$ such that $C\cap U=\{y=0\},$ $\mathrm{T}\F|_{U}=\{\mathrm{d}y+yh(x,y)\mathrm{d}x=0\},$
\begin{Small}
\begin{align*}
\mathrm{T}\W_{\nu}|_{U}=\left\{\mathrm{d}y^{\nu}+y\Big(a_{\nu-1}(x,y)\mathrm{d}y^{\nu-1}\mathrm{d}x+\cdots+a_{0}(x,y)\mathrm{d}x^{\nu}\Big)=0\right\}
&&{\fontsize{11}{11pt}\text{and}}&&
\mathrm{T}\W_{d-\nu-1}|_{U}=\left\{\prod\limits_{l=1}^{d-\nu-1}(\mathrm{d}x+g_{l}(x,y)\mathrm{d}y)=0\right\}.
\end{align*}
\end{Small}
\hspace{-1.3mm}Then, by passing to the ramified covering $\pi\hspace{1mm}\colon(x,y)\mapsto(x,y^{\nu})$, we obtain that $\pi^{*}\F=\F_0,$\, $\pi^{*}\W_{\nu}=\boxtimes_{k=1}^{\nu}\F_k$ and~$\pi^{*}\W_{d-\nu-1}=\boxtimes_{l=1}^{d-\nu-1}\F_{\nu+l},$ where
\begin{align*}
&\F_0\hspace{0.1mm}:\hspace{0.1mm}\mathrm{d}y+\tfrac{1}{\nu}yh(x,y^{\nu})\mathrm{d}x=0,&&
\F_k\hspace{0.1mm}:\hspace{0.1mm}\mathrm{d}x+y^{\nu-2}f(x,\zeta^{k}y)\zeta^{-k}\mathrm{d}y=0,&&
\F_{\nu+l}\hspace{0.1mm}:\hspace{0.1mm}\mathrm{d}x+\nu y^{\nu-1}g_{l}(x,y^{\nu})\mathrm{d}y=0,
\end{align*}
with $\zeta=\exp(\tfrac{2\mathrm{i}\pi}{\nu}).$ Therefore we have
\begin{align*}
K(\pi^{*}\W)=K\big(\pi^{*}(\W_{\nu}\boxtimes\W_{d-\nu-1})\big)+\sum\limits_{1\leq i<j\leq d-1}K(\F_0\boxtimes\F_i\boxtimes\F_j).
\end{align*}
Now, on the one hand, \cite[Proposition~2.6]{MP13} ensures that $K(\W_{\nu}\boxtimes\W_{d-\nu-1})$ is holomorphic along $\{y=0\}$; therefore so is $K\big(\pi^{*}(\W_{\nu}\boxtimes\W_{d-\nu-1})\big)=\pi^*\big(K(\W_{\nu}\boxtimes\W_{d-\nu-1})\big).$ On the other hand, since $\{y=0\}$ is invariant by $\F_0$ and $\{y=0\}\not\subset\Tang(\F_0,\F_i\boxtimes\F_j)$, then $K(\F_0\boxtimes\F_i\boxtimes\F_j)$ is holomorphic on $\{y=0\}$ by applying \cite[Theorem~1]{MP13}, \emph{see} also \cite[Theorem~1.1]{BFM14} or \cite[Corollary~1.30]{Bed17}. It follows that $\pi^*K(\W)=K(\pi^{*}\W)$ is holomorphic on $\{y=0\}.$ As a consequence $K(\W)$ is holomorphic along $C$.
\end{proof}

\begin{rem}
Similarly, we obtain an analogue of Proposition~\ref{pro:holomorphie-courbure-F-W-nu-W-d-nu-1} by replacing the foliation $\F$ by a $2$-web $\W_2=\F_1\boxtimes\F_2$ leaving the component $C\subset\Delta(\W_{\nu})$ totally invariant.
\end{rem}

\begin{proof}[\sl Proof of Theorem~\ref{thm:holomorphie-courbure-G(I^tr)-D-ell}]
\textbf{\textit{i.}} First assume that $\ell=L_\infty.$ Then Theorem~\ref{thm:K-Leg-H-egale-K-Leg-L-infini-H} ensures that $K(\Leg\preh)=K(\Leg\mathcal{H}).$ Now, we know from~\cite[Theorem~3.1]{BM18Bull} that the flatness of the web $\Leg\mathcal{H}$ is characterized by the holomorphy of its curvature $K(\Leg\mathcal{H})$ on~$\G_{\mathcal{H}}(\ItrH).$ Therefore the same is true for the web $\Leg\preh,$ {\it i.e.} $\Leg\preh$ is flat if and only if $K(\Leg\preh)$ is holomorphic along $\G_{\mathcal{H}}(\ItrH).$
\vspace{0.1mm}

\textbf{\textit{ii.}} Now assume that $\ell$ passes through the origin. Let us fix $s\in\Sigma_{\mathcal{H}}^{\infty}$ and describe the $d$-web $\Leg\preh$ in~a~neighborhood of a generic point $m$ of the line $\check{s}$ dual to $s.$ Denote by $\nu-1\geq0$ the radiality order~of~$s$; by~\cite[Proposition~3.3]{MP13}, in a neighborhood of $m$, we can decompose $\Leg\preh$ as
\begin{align}\label{equa:Leg-preh-Leg-ell-W-nu-W-d-nu-1}
\Leg\preh=\Leg\ell\boxtimes\W_{\nu}\boxtimes\W_{d-\nu-1},
\end{align}
where $\W_{\nu}$ is an irreducible $\nu$-web leaving $\check{s}$ invariant and whose discriminant $\Delta(\W_{\nu})$ has minimal multiplicity $\nu-1$ along $\check{s}$, and where $\W_{d-\nu-1}$ is a $(d-\nu-1)$-web transverse to $\check{s}.$ More explicitly, up to linear conjugation, we can write $\ell=(y=\alpha\,x)$, $s=[1:\rho:0]$, $\check{s}=\{p=\rho\}$, $m=(\rho,q)$ and $\Gunderline_{\mathcal H}^{-1}(\rho)=\{\rho,r_1,\ldots,r_{d-\nu-1}\},$ so that  (\emph{see}~\cite[Section~2]{BM18Bull})
\begin{small}
\begin{align*}
&
\Leg\ell:(p-\alpha)\mathrm{d}q-q\mathrm{d}p=0,
&&
\W_{\nu}\Big|_{\check{s}}:\mathrm{d}p=0,
&&
\W_{d-\nu-1}\Big|_{\check{s}}:\prod_{i=1}^{d-\nu-1}\Big((\rho-r_i)\mathrm{d}q-q\mathrm{d}p\Big)=0.
\end{align*}
\end{small}
\hspace{-1mm}We deduce, in particular, the two following properties:
\begin{itemize}
\item [($\mathfrak{a}$)] if $\check{s}\not\subset\G_{\mathcal{H}}(\ItrH)$, the web $\W_{d-\nu-1}$ is regular in a neighborhood of $m$, because we then have $r_i\neq r_j$~if~$i\neq j$;

\item [($\mathfrak{b}$)] if $\check{s}\neq D_\ell=\{p=\Gunderline_{\mathcal H}(\alpha)\},$ then $\Leg\ell$ is transverse to $\check{s}$ and $\check{s}\not\subset\Tang(\Leg\ell,\W_{d-\nu-1}).$
\end{itemize}
\smallskip

\noindent If $s\in\radH$ is such that $\check{s}\not\subset\G_{\mathcal{H}}(\ItrH)\cup D_\ell$, then properties ($\mathfrak{a}$) and ($\mathfrak{b}$) ensure that the $(d-\nu)$-web $\W_{d-\nu}:=\Leg\ell\boxtimes\W_{d-\nu-1}$ is transverse to $\check{s}$ and is regular in a neighborhood of $m$. Therefore the curvature of $\Leg\preh=\W_\nu\boxtimes\W_{d-\nu}$ is holomorphic in a neighborhood of $m$ by applying~\cite[Proposition~2.6]{MP13}. It~follows~that~$K(\Leg\preh)$ is holomorphic on $\radHd\setminus(\G_{\mathcal{H}}(\ItrH)\cup D_\ell).$ Thus, according to the second assertion of Lemma~\ref{lem:Delta-Leg-H} and Lemma~\ref{lem:holomo-O}, $\Leg\preh$ is flat if and only if $K(\Leg\preh)$ is holomorphic along $\G_{\mathcal{H}}(\ItrH)\cup D_\ell.$
\smallskip

\noindent Let us show that in the particular case where $\ell$ is invariant by $\mathcal{H}$, the flatness of $\Leg\preh$ is equivalent to the holomorphy of $K(\Leg\preh)$ on $\G_{\mathcal{H}}(\ItrH).$ From the above discussion,  it suffices to prove that if $D_\ell$ is not contained in $\G_{\mathcal{H}}(\ItrH)$, then $K(\Leg\preh)$ is holomorphic on $D_\ell.$ The invariance of $\ell$ by $\mathcal{H}$ implies the existence of $s\in\Sigma_{\mathcal{H}}^{\infty}$ such that $\ell=(Os)$; then $D_\ell=\check{s}$ is invariant by the radial foliation $\Leg\ell.$ Moreover, the condition $D_\ell\not\subset\G_{\mathcal{H}}(\ItrH)$ implies that $\W_{d-\nu-1}$ is regular in a neighborhood of every generic point $m$ of $D_\ell$ (property~($\mathfrak{a}$)). By applying Theorem~1 of \cite{MP13} if $\nu=1$ and Proposition~\ref{pro:holomorphie-courbure-F-W-nu-W-d-nu-1} if $\nu\geq2,$ we deduce that $K(\Leg\preh)$ is holomorphic along $D_\ell.$
\end{proof}

\noindent From Theorem~\ref{thm:holomorphie-courbure-G(I^tr)-D-ell} we deduce the two following corollaries.
\begin{cor}\label{cor:platitude-pre-feuilletage-homogene-convexe}
{\sl Let $\preh$ be a homogeneous convex pre-foliation of co-degree $1$ and degree $d\geq3$ on $\pp.$ Then the $d$-web $\Leg\preh$ is flat.}
\end{cor}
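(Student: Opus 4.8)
The goal is Corollary~\ref{cor:platitude-pre-feuilletage-homogene-convexe}: if $\preh=\ell\boxtimes\mathcal{H}$ is a convex homogeneous pre-foliation of co-degree $1$ and degree $d\geq3$ on $\pp$, then $\Leg\preh$ is flat. Since $\preh$ is convex in the sense of Definition~\ref{definalph:prefeuilletage-convexe-convexe-reduit}, the foliation $\mathcal{H}$ is convex and the line $\ell$ is invariant by $\mathcal{H}$. Thus I am in the invariant case of Theorem~\ref{thm:holomorphie-courbure-G(I^tr)-D-ell}, which reduces the problem: $\Leg\preh$ is flat if and only if its curvature $K(\Leg\preh)$ is holomorphic along $\G_{\mathcal{H}}(\ItrH)$.

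The key observation is that convexity of the foliation $\mathcal{H}$ forces the transverse inflection divisor $\ItrH$ to vanish. Indeed, a convex homogeneous foliation has no leaves with inflection points other than its invariant lines, and by the description of $\ItrH$ via the critical points of $\Gunderline_{\mathcal{H}}$ recalled before Lemma~\ref{lem:Delta-Leg-H}, the transverse inflection divisor is built precisely from the non-fixed critical points of $\Gunderline_{\mathcal{H}}$. Convexity means these contribute nothing, so $\ItrH=1$ (the empty divisor); equivalently $\Gunderline_{\mathcal{H}}$ has no critical points outside the set of fixed critical points corresponding to radial singularities. I would state this as the crucial step: for a convex homogeneous $\mathcal{H}$, one has $\ItrH=\emptyset$, hence $\G_{\mathcal{H}}(\ItrH)=\emptyset$.

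With $\G_{\mathcal{H}}(\ItrH)$ empty, the holomorphy condition in Theorem~\ref{thm:holomorphie-courbure-G(I^tr)-D-ell} (invariant case) becomes vacuous: there is no component along which holomorphy could fail. Therefore the criterion is automatically satisfied and $\Leg\preh$ is flat. In short, the plan is: (1) unpack the convexity hypothesis to get $\mathcal{H}$ convex and $\ell$ $\mathcal{H}$-invariant; (2) invoke the invariant case of Theorem~\ref{thm:holomorphie-courbure-G(I^tr)-D-ell} to reduce flatness to holomorphy of $K(\Leg\preh)$ on $\G_{\mathcal{H}}(\ItrH)$; (3) show convexity of $\mathcal{H}$ implies $\ItrH=\emptyset$, making the condition trivially true.

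The main obstacle is pinning down precisely why convexity of the homogeneous foliation $\mathcal{H}$ annihilates the \emph{transverse} inflection divisor rather than the full inflection divisor $\IH$. The inflection divisor of any foliation on $\pp$ decomposes into an invariant part (supported on invariant lines) and a transverse part $\ItrH$; convexity says exactly that the non-linear leaves have no inflection points, which is the statement that the transverse part is trivial. I would cite the relevant definitions and results from \cite[Section~2]{BM18Bull} (the correspondence between $\ItrH$ and non-fixed critical points of $\Gunderline_{\mathcal{H}}$, together with the characterization of convexity) to make this identification rigorous, and this is the only step requiring genuine input beyond directly applying the already-established Theorem~\ref{thm:holomorphie-courbure-G(I^tr)-D-ell}.
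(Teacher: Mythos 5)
Your proof is correct and is exactly the argument the paper intends: the paper derives this corollary directly from Théorème~\ref{thm:holomorphie-courbure-G(I^tr)-D-ell} without further comment, the point being precisely that convexity of $\mathcal{H}$ means $\ItrH$ is trivial, so the holomorphy criterion in the invariant case is vacuous. Your identification of convexity with the vanishing of the \emph{transverse} part of the inflection divisor (rather than the full divisor $\IH$) is the right reading of the definitions in \cite[Section~2]{BM18Bull}.
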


\begin{cor}\label{cor:platitude-homogene-convexe-ell-non-invariante-non-effectif}
{\sl Let $\preh=\ell\boxtimes\mathcal{H}$ be a homogeneous pre-foliation of co-degree $1$ and degree $d\geq3$~on~$\pp.$ Assume that the homogeneous foliation $\mathcal{H}$ is convex and that the line $\ell$ is not invariant by $\mathcal{H}.$ Then the $d$-web~$\Leg\preh$ is flat if and only if its curvature $K(\Leg\preh)$ is holomorphic on $D_{\ell}=\G_{\mathcal{H}}(\ell).$
}
\end{cor}

%------------------------------------------------------------------------------------------------------------------------------------------------------------------------
\noindent The following theorem is an effective criterion for the holomorphy of the curvature of the web dual to a homogeneous pre-foliation $\preh=\ell\boxtimes\mathcal{H}$ (with $O\in\ell$) along an irreducible component of~$\Delta(\mathrm{Leg}\mathcal{H})\setminus(D_{\ell}\cup\check{O}).$
\begin{thm}\label{thm:holomorphie-courbure-homogene-D-neq-D-ell}
{\sl
Let $\preh=\ell\boxtimes\mathcal{H}$ be a homogeneous pre-foliation of co-degree $1$ and degree $d\geq3$ on $\pp$, defined by the $1$-form
\begin{align*}
&\hspace{1.5cm}\omega=(\alpha\,x+\beta\,y)\left(A(x,y)\mathrm{d}x+B(x,y)\mathrm{d}y\right),\quad A,B\in\mathbb{C}[x,y]_{d-1},\hspace{2mm}\gcd(A,B)=1.
\end{align*}
Let $(p,q)$ be the affine chart of $\pd$ associated to the line $\{y=px-q\}\subset\pp$ and let $D=\{p=p_0\}$ be an irreducible component of $\Delta(\mathrm{Leg}\mathcal{H})\setminus(D_{\ell}\cup\check{O}).$ Write $\Gunderline_{\mathcal H}^{-1}([p_0:1])=\{[a_1:b_1],\ldots,[a_n:b_n]\}$ and denote by $\nu_i$ the ramification index of $\Gunderline_{\mathcal H}$ at the point $[a_i:b_i]\in\mathbb{P}_{\C}^{1}.$ For $i\in\{1,\ldots,n\}$, define the polynomials $P_i\in\C[x,y]_{d-\nu_i-1}$ and~$Q_i\in\C[x,y]_{2d-\nu_i-3}$ by
\begin{SMALL}
\begin{align}\label{equa:Pi-Qi}
P_i(x,y;a_i,b_i):=\frac{\left|
\begin{array}{cc}
A(x,y)  &  A(b_i,a_i)
\\
B(x,y)  &  B(b_i,a_i)
\end{array}
\right|}{(b_iy-a_i\hspace{0.2mm}x)^{\nu_i}}
\quad{\fontsize{11}{11pt}\text{and}}\quad
Q_i(x,y;a_i,b_i):=(\nu_i-2)\left(\dfrac{\partial{B}}{\partial{x}}-\dfrac{\partial{A}}{\partial{y}}\right)P_i(x,y;a_i,b_i)+2(\nu_i+1)
\left|\begin{array}{cc}
\dfrac{\partial{P_i}}{\partial{x}} &  A(x,y)
\vspace{2mm}
\\
\dfrac{\partial{P_i}}{\partial{y}} &  B(x,y)
\end{array} \right|.
\end{align}
\end{SMALL}
\hspace{-1mm}Then the curvature of $\mathrm{Leg}\preh$ is holomorphic on $D$ if and only if
\begin{align*}
\sum_{i=1}^{n}\left(1-\frac{1}{\nu_{i}}\right)\Big(p_0\hspace{0.2mm}b_i-a_i\Big)
\left(\frac{Q_i(b_i,a_i;a_i,b_i)}{B(b_i,a_i)P_i(b_i,a_i;a_i,b_i)}+\frac{3\nu_i(\alpha+p_0\,\beta)}{\alpha\,b_i+\beta\,a_i}\right)=0.
\end{align*}
}
\end{thm}

\begin{proof}[\sl Proof]
Let $\delta\in\C$ be such that $\beta+\alpha\delta\neq0$ and $b_i-a_i\delta\neq0$ for all $i=1,\ldots,n$. Up to conjugating $\omega$~by~the~linear transformation $(x+\delta\,y,y)$, we can assume that none of the lines $\ell=(\alpha x+\beta y=0)$ and $L_i=(b_i\hspace{0.2mm}y-a_i\hspace{0.2mm}x=0)$ is vertical, {\it i.e.} that $\beta\neq0$ and $b_i\neq0$ for all $i=1,\ldots,n.$ Let us then put $\rho:=-\frac{\alpha}{\beta}$ and~$r_i:=\frac{a_i}{b_i}$; we have $\Gunderline_{\mathcal H}^{-1}(p_0)=\{r_1,\ldots,r_n\}$ with $\Gunderline_{\mathcal H}(z)=-\dfrac{A(1,z)}{B(1,z)}.$ According to \cite[Lemma~3.5]{BM22arxiv}, there therefore exists a constant $c\in\C^*$ such that $$-A(1,z)=p_0B(1,z)-c\prod_{i=1}^{n}(z-r_i)^{\nu_i}.$$ Moreover, the $d$-web $\Leg\preh$ is given in the affine chart $(p,q)$ by the differential equation
\begin{align}\label{equa:LegH}
\Big((p-\rho)x-q\Big)\Big(A(x,px-q)+pB(x,px-q)\Big)=0, \qquad \text{with} \qquad x=\frac{\mathrm{d}q}{\mathrm{d}p};
\end{align}
since $A,B\in\C[x,y]_{d-1},$ this equation can then be rewritten as
\begin{align*}
0
&=x^{d-1}\Big((p-\rho)x-q\Big)\Big(A(1,p-\tfrac{q}{x})+pB(1,p-\tfrac{q}{x})\Big)\\
&=x^{d}\Big(p-\frac{q}{x}-\rho\Big)\Big((p-p_0)B(1,p-\tfrac{q}{x})+c\prod_{i=1}^{n}(p-\tfrac{q}{x}-r_i)^{\nu_i}\Big), \qquad \text{with} \qquad x=\frac{\mathrm{d}q}{\mathrm{d}p}.
\end{align*}
Put $\check{x}:=q$,\, $\check{y}:=p-p_0$\, and \,$\check{p}:=\dfrac{\mathrm{d}\check{y}}{\mathrm{d}\check{x}}=\dfrac{1}{x}$; in these new coordinates $D=\{\check{y}=0\}$ and $\Leg\preh$ is described by the differential equation
\begin{align*}
F(\check{x},\check{y},\check{p}):=
\Big(\check{y}+p_0-\check{p}\check{x}-\rho\Big)
\Big(\check{y}B(1,\check{y}+p_0-\check{p}\check{x})+c\prod_{i=1}^{n}(\check{y}+p_0-\check{p}\check{x}-r_i)^{\nu_i}\Big)=0.
\end{align*}
We have $F(\check{x},0,\check{p})=c(-\check{x})^d\big(\check{p}-\varphi_{0}(\check{x})\big)\prod_{i=1}^{n}\big(\check{p}-\varphi_{i}(\check{x})\big)^{\nu_i},$ where $\varphi_{0}(\check{x})=\dfrac{p_0-\rho}{\check{x}}$\, and \,$\varphi_{i}(\check{x})=\dfrac{p_0-r_i}{\check{x}}$; the~hypothesis that $D\neq D_{\ell}=\{p=\Gunderline_{\mathcal H}(\rho)\}$ translates into the fact that, for all $i\in\{1,\ldots,n\},$ $r_i\neq\rho$ and therefore $\varphi_{i}\not\equiv\varphi_{0}.$ Note that if $\nu_i\geq2,$ then $\partial_{\check{y}}F\big(\check{x},0,\varphi_{i}(\check{x})\big)=(r_i-\rho)B(1,r_i)\neq0$; since $\partial_{\check{p}}F\big(\check{x},0,\varphi_{0}(\check{x})\big)\not\equiv0$ and~$\partial_{\check{p}}F\big(\check{x},0,\varphi_{i}(\check{x})\big)\not\equiv0$ if $\nu_i=1,$ we deduce that the surface\label{not:S-W}
\begin{align*}
S_{\Leg\preh}:=\left\{(\check{x},\check{y},\check{p})\in\mathbb{P}\mathrm{T}^{*}\pd\hspace{1mm}\vert\hspace{1mm}F(\check{x},\check{y},\check{p})=0\right\}
\end{align*}
is smooth along $D=\{\check{y}=0\}.$ Thus, according to~\cite[Theorem~2.1]{BM22arxiv}, the curvature of $\Leg\preh$ is holomorphic on $D=\{\check{y}=0\}$ if and only if $\sum_{i=1}^{n}(\nu_{i}-1)\varphi_{i}(\check{x})\psi_{i}(\check{x})\equiv0$\, and\, $\sum_{i=1}^{n}(\nu_{i}-1)\frac{\mathrm{d}}{\mathrm{d}\check{x}}\psi_{i}(\check{x})\equiv0,$ where, for all $i\in\{1,\ldots,n\}$ such that $\nu_i\geq2,$
\begin{align*}
\psi_{i}(\check{x})=\frac{1}{\nu_{i}}
\left[
(\nu_{i}-2)\left(d-\varphi_{i}(\check{x})\dfrac{\partial_{\check{p}}\partial_{\check{y}}F\big(\check{x},0,\varphi_{i}(\check{x})\big)}{\partial_{\check{y}}F\big(\check{x},0,\varphi_{i}(\check{x})\big)}\right)
-2(\nu_{i}+1)\left(\frac{\varphi_{0}(\check{x})}{\varphi_{i}(\check{x})-\varphi_{0}(\check{x})}+\hspace{-3.5mm}\sum\limits_{\hspace{3.5mm}j=1,j\neq i}^{n}\frac{\nu_{j}\varphi_{j}(\check{x})}{\varphi_{i}(\check{x})-\varphi_{j}(\check{x})}\right)
\right].
\end{align*}
Now, if $\nu_i\geq3$ then $\partial_{\check{p}}\partial_{\check{y}}F\big(\check{x},0,\varphi_{i}(\check{x})\big)=-\check{x}\Big(B(1,r_i)+(r_i-\rho)\partial_{y}B(1,r_i)\Big).$ It follows that
\begin{Small}
\begin{align*}
\psi_{i}(\check{x})=\psi_i:=\frac{1}{\nu_{i}}
\left[
(\nu_{i}-2)\left(d+\frac{\Big(p_0-r_i\Big)\Big(B(1,r_i)+(r_i-\rho)\partial_{y}B(1,r_i)\Big)}{(r_i-\rho)B(1,r_i)}\right)
+2(\nu_{i}+1)\left(\frac{p_0-\rho}{r_i-\rho}+\hspace{-3.5mm}\sum\limits_{\hspace{3.5mm}j=1,j\neq i}^{n}\frac{\nu_{j}(p_0-r_j)}{r_i-r_j}\right)
\right].
\end{align*}
\end{Small}
\hspace{-1mm}Therefore $K(\Leg\preh)$ is holomorphic on $D=\{\check{y}=0\}$ if and only if $\sum_{i=1}^{n}(\nu_{i}-1)\varphi_{i}(\check{x})\psi_{i}\equiv0.$ On~the~other~hand, arguing as in the proof of~\cite[Theorem~3.1]{BM22arxiv}, we obtain that
\begin{align*}
\hspace{-3.5mm}\sum\limits_{\hspace{3.5mm}j=1,j\neq i}^{n}\frac{\nu_{j}(p_0-r_j)}{r_i-r_j}
=\frac{\left|\begin{array}{cc}
\partial_{x}P_i(1,r_i;r_i,1) &  A(1,r_i)
\vspace{2mm}
\\
\partial_{y}P_i(1,r_i;r_i,1) &  B(1,r_i)
\end{array} \right|}{B(1,r_i)P_i(1,r_i;r_i,1)}
\end{align*}
and that, for all $i\in\{1,\ldots,n\}$ such that $\nu_i\geq2,$
\begin{align*}
(d-1)B(1,r_i)+(p_0-r_i)\partial_{y}B(1,r_i)=\partial_{x}B(1,r_i)-\partial_{y}A(1,r_i),
\end{align*}
so that
\begin{SMALL}
\begin{align*}
\psi_i&=\frac{1}{\nu_{i}}
\left[
(\nu_{i}-2)\left(\frac{p_0-\rho}{r_i-\rho}+\frac{\partial_{x}B(1,r_i)-\partial_{y}A(1,r_i)}{B(1,r_i)}\right)
+2(\nu_{i}+1)\left(
\frac{p_0-\rho}{r_i-\rho}+
\frac{\left|\begin{array}{cc}
\partial_{x}P_i(1,r_i;r_i,1) &  A(1,r_i)
\vspace{2mm}
\\
\partial_{y}P_i(1,r_i;r_i,1) &  B(1,r_i)
\end{array} \right|}{B(1,r_i)P_i(1,r_i;r_i,1)}
\right)
\right]
\\
&=
\dfrac{Q_{i}(1,r_i;r_i,1)}{\nu_{i}B(1,r_i)P_{i}(1,r_i;r_i,1)}+\frac{3(p_0-\rho)}{r_i-\rho}.
\end{align*}
\end{SMALL}
\hspace{-1mm}As a result, $K(\Leg\preh)$ is holomorphic along $D=\{\check{y}=0\}$ if and only if
\begin{align*}
\frac{1}{\check{x}}\sum_{i=1}^{n}\left(1-\frac{1}{\nu_{i}}\right)\Big(p_0-r_i\Big)
\left(\frac{Q_i(1,r_i;r_i,1)}{B(1,r_i)P_i(1,r_i;r_i,1)}+\frac{3\nu_i(p_0-\rho)}{r_i-\rho}\right)=0,
\end{align*}
hence the theorem follows.
\end{proof}

\begin{rems}\label{rems:holomorphie-courbure-homogene-D-neq-D-ell}
\begin{itemize}
\item []\hspace{-0.8cm}(i) We recover the fact (\emph{cf.} step \textbf{\textit{ii.}} of the proof of Theorem~\ref{thm:holomorphie-courbure-G(I^tr)-D-ell}) that~the~curvature~of $\mathrm{Leg}\preh$ is always holomorphic along $\radHd\setminus(\G_{\mathcal{H}}(\ItrH)\cup D_{\ell}).$ Indeed, if $D$ is contained in $\radHd\setminus(\G_{\mathcal{H}}(\ItrH)\cup D_{\ell}),$ then the fiber $\Gunderline_{\mathcal H}^{-1}([p_0:1])$ does not contain any non-fixed critical point of $\Gunderline_{\mathcal H}$, so that we~have~$p_0\hspace{0.2mm}b_i-a_i=0$ if $\nu_i\geq2,$ which implies (Theorem~\ref{thm:holomorphie-courbure-homogene-D-neq-D-ell}) that $K(\mathrm{Leg}\preh)$ is holomorphic on $D.$
\smallskip

\item [(ii)] We know from \cite[Theorem~3.1]{BM22arxiv} that the curvature of $\mathrm{Leg}\mathcal{H}$ is holomorphic on $D$ if and only if
\begin{align*}
\sum_{i=1}^{n}\left(1-\frac{1}{\nu_{i}}\right)\frac{(p_0\hspace{0.2mm}b_i-a_i)Q_i(b_i,a_i;a_i,b_i)}{B(b_i,a_i)P_i(b_i,a_i;a_i,b_i)}=0.
\end{align*}
From this result and Theorem~\ref{thm:holomorphie-courbure-homogene-D-neq-D-ell}, we deduce the following properties:
\smallskip
\begin{itemize}
\item  If the curvature of $\mathrm{Leg}\mathcal{H}$ is holomorphic on $D$, then the curvature of $\mathrm{Leg}\preh$ is holomorphic on $D$ if and only if
\begin{align*}
(\alpha+p_0\,\beta)\sum_{i=1}^{n}\frac{(\nu_i-1)(p_0\hspace{0.2mm}b_i-a_i)}{\alpha\,b_i+\beta\,a_i}=0.
\end{align*}

\item In particular, when $d=3$ the fiber $\Gunderline_{\mathcal H}^{-1}([p_0:1])$ is reduced to a single point, say $[a:b],$ and the~holomorphy of the curvature of $\mathrm{Leg}\preh$ on $D$ is equivalent to $(\alpha+p_0\,\beta)(p_0\hspace{0.2mm}b-a)=0$, {\it i.e.} to $\alpha+p_0\,\beta=0$ or~$[a:b]=[p_0:1]$, and therefore to $(1,p_0)\in\ell$ or $[p_0:1]$ is fixed by $\Gunderline_{\mathcal H}.$

\item If $(1,p_0)\in\ell$ then we have equivalence between the holomorphy on $D$ of $K(\mathrm{Leg}\preh)$ and that of $K(\mathrm{Leg}\mathcal{H}).$
\end{itemize}
\smallskip

\item [(iii)] Assume that $\nu_{i}=\nu\geq2$ for all $i\in\{1,\ldots,n\}.$ Then the curvature of $\mathrm{Leg}\preh$ is holomorphic on $D$ if~and~only~if
\begin{align*}
\sum_{i=1}^{n}\left(p_0\hspace{0.2mm}b_i-a_i\right)
\left(\frac{\big(\nu-2\big)\big(\partial_{x}B(b_i,a_i)-\partial_{y}A(b_i,a_i)\big)}{B(b_i,a_i)}+\frac{3\nu(\alpha+p_0\,\beta)}{\alpha\,b_i+\beta\,a_i}\right)=0.
\end{align*}
Indeed, in the above proof, put $\delta_{i,j}=\frac{(p_0-r_i)(p_0-r_j)}{(r_i-r_j)}$ and note that
\begin{Small}
\begin{align*}
\sum_{i=1}^{n}\left((\nu-1)\varphi_{i}(\check{x})\hspace{-3.5mm}\sum\limits_{\hspace{3.5mm}j=1,j\neq i}^{n}\frac{\nu(p_0-r_j)}{r_i-r_j}\right)
=\frac{\nu(\nu-1)}{\check{x}}\sum_{i=1}^{n}\hspace{-3.5mm}\sum\limits_{\hspace{3.5mm}j=1,j\neq i}^{n}\delta_{i,j}
=\frac{\nu(\nu-1)}{\check{x}}\sum_{1\leq i<j\leq n}(\delta_{i,j}+\delta_{j,i})\equiv0.
\end{align*}
\end{Small}
\hspace{-1mm}In particular, if the fiber $\Gunderline_{\mathcal H}^{-1}([p_0:1])$ contains a single non-fixed critical point of $\Gunderline_{\mathcal H}$, say $[a:b]$, then
\begin{itemize}
\item [--] either $\Gunderline_{\mathcal H}^{-1}([p_0:1])=\{[a:b]\}$, in which case $\nu=d-1$;
\item [--] or $\#\Gunderline_{\mathcal H}^{-1}([p_0:1])=2$, in which case $d$ is necessarily odd, $d=2k+1,$ and $\nu=k.$
\end{itemize}
In both cases, the curvature of $\mathrm{Leg}\preh$ is holomorphic on $D$ if and only if
\[
(\nu-2)(\alpha\,b+\beta\,a)\Big(\partial_{x}B(b,a)-\partial_{y}A(b,a)\Big)+3\nu(\alpha+p_0\,\beta)B(b,a)=0.
\]
\end{itemize}
\end{rems}

\begin{eg}
Let us consider the homogeneous pre-foliation $\preh=\ell\boxtimes\mathcal{H}$ of co-degree $1$ and odd degree $2k+1\geq5$ on $\pp$ defined by the $1$-form
\begin{align*}
\hspace{1cm}&\omega=\left(x-\tau\,y\right)\left(y^k(y-x)^k\mathrm{d}x+(y-\lambda\,x)^k(y-\mu\,x)^k\mathrm{d}y\right),&&
\text{where }\lambda,\mu\in\C\setminus\{0,1\}\hspace{2mm}\text{and}\hspace{2mm}\tau\in\C\setminus\{1\}.
\end{align*}
We know from~\cite[Example~3.4]{BM22arxiv} that $D:=\{p=0\}\subset\Delta(\mathrm{Leg}\mathcal{H})$ and that the fiber $\Gunderline_{\mathcal H}^{-1}([0:1])$ consists of the two points $[0:1]$ and $[1:1]$: the point $[0:1]$ (resp. $[1:1]$) is critical and fixed (resp. non-fixed) for $\Gunderline_{\mathcal{H}}$ with multiplicity $k-1.$ Moreover, since $\tau\neq1$, we have \begin{small}$[1:\tau]\not\in\Gunderline_{\mathcal H}^{-1}([0:1])$, so that $D\neq D_\ell=\big\{[p:1]=\Gunderline_{\mathcal H}([1:\tau])\big\}$\end{small}. From Remark~\ref{rems:holomorphie-courbure-homogene-D-neq-D-ell}~(iii), we deduce that the curvature of $\Leg\preh$ is holomorphic along~$D$ if and only if
\begin{Small}
\begin{align*}
0=(k-2)(1-\tau)\Big(\partial_{x}B(1,1)-\partial_{y}A(1,1)\Big)+3k B(1,1)
=k(1-\lambda)^k(1-\mu)^k\left(\frac{(k-2)(\tau-1)(\lambda+\mu-2\lambda\mu)}{(\lambda-1)(\mu-1)}+3\right),
\end{align*}
\end{Small}
\hspace{-1mm}{\it i.e.} if and only if the quadruple $(k,\lambda,\mu,\tau)$ satisfies the equation
\begin{small}
$(k-2)(\tau-1)(\lambda+\mu-2\lambda\mu)+3(\lambda-1)(\mu-1)=0.$
\end{small}
\hspace{-1mm}Note that, according to \cite[Example~3.4]{BM22arxiv}, the holomorphy of the curvature of $\Leg\mathcal{H}$ along $D$ is characterized by the equation $(k-2)(\lambda+\mu-2\lambda\mu)=0$. It follows, in particular,  that if the curvature of $\Leg\mathcal{H}$ is holomorphic on $D$, then the curvature of $\Leg\preh$ is not holomorphic on $D$.
\end{eg}

\begin{cor}\label{cor:holomorphie-courbure-droite-inflex-nu-1-D-neq-D-ell}
{\sl Let $\preh=\ell\boxtimes\mathcal{H}$ be a homogeneous pre-foliation of co-degree $1$ and degree $d\geq3$ on $\pp$, defined by the $1$-form
\begin{align*}
&\hspace{1.5cm}\omega=(\alpha\,x+\beta\,y)\left(A(x,y)\mathrm{d}x+B(x,y)\mathrm{d}y\right),\quad A,B\in\mathbb{C}[x,y]_{d-1},\hspace{2mm}\gcd(A,B)=1.
\end{align*}
Assume that the foliation $\mathcal{H}$ has a transverse inflection line $T=(ax+by=0)$ of order $\nu-1$. Assume moreover that~$[-a:b]\in\mathbb{P}^{1}_{\mathbb{C}}$ is the only non-fixed critical point of $\Gunderline_{\mathcal H}$ in its fiber $\Gunderline_{\mathcal H}^{-1}(\Gunderline_{\mathcal H}([-a:b]))$ and that~$[-\alpha:\beta]\not\in\Gunderline_{\mathcal H}^{-1}(\Gunderline_{\mathcal H}([-a:b])).$ Then the curvature of $\mathrm{Leg}\preh$ is holomorphic on $T'=\mathcal{G}_{\mathcal{H}}(T)$ if~and~only~if
\[
(\alpha\,b-\beta\,a)Q(b,-a;a,b)+3\nu\Big(\alpha\,B(b,-a)-\beta\,A(b,-a)\Big)P(b,-a;a,b)=0,
\]
where
\begin{Small}
\begin{align*}
Q(x,y;a,b):=(\nu-2)\left(\dfrac{\partial{B}}{\partial{x}}-\dfrac{\partial{A}}{\partial{y}}\right)P(x,y;a,b)+2(\nu+1)
\left|\begin{array}{cc}
\dfrac{\partial{P}}{\partial{x}} &  A(x,y)
\vspace{2mm}
\\
\dfrac{\partial{P}}{\partial{y}} &  B(x,y)
\end{array} \right|
\quad\text{and}\quad
P(x,y;a,b):=\frac{\left|
\begin{array}{cc}
A(x,y)  &  A(b,-a)
\\
B(x,y)  &  B(b,-a)
\end{array}
\right|}{(ax+by)^{\nu}}.
\end{align*}
\end{Small}
}
\end{cor}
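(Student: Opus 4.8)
The plan is to obtain this statement as a direct specialization of Theorem~\ref{thm:holomorphie-courbure-homogene-D-neq-D-ell}, applied to the fibre of $\Gunderline_{\mathcal H}$ over the image of the inflection line $T$. First I would translate the data of the corollary into that of the theorem. Since $T=(ax+by=0)$ is the line of direction $[y:x]=[-a:b]$, it corresponds to the non-fixed critical point $[-a:b]$ of $\Gunderline_{\mathcal H}$, and saying that $T$ is transverse of order $\nu-1$ amounts to saying that $[-a:b]$ has ramification index $\nu\geq2$. Writing $[p_0:1]:=\Gunderline_{\mathcal H}([-a:b])=[-A(b,-a):B(b,-a)]$, we have $T'=\mathcal{G}_{\mathcal{H}}(T)=\{p=p_0\}$ with $p_0=-A(b,-a)/B(b,-a)$ and $B(b,-a)\neq0$, the component $T'$ being affine. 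Because $D_\ell=\{p=\Gunderline_{\mathcal H}([-\alpha:\beta])\}$ and $\check O=\{q=0\}$ in the chart $(p,q)$, the hypothesis $[-\alpha:\beta]\notin\Gunderline_{\mathcal H}^{-1}([p_0:1])$ ensures $D_\ell\neq\{p=p_0\}\neq\check O$, so $\{p=p_0\}$ is a genuine component of $\Delta(\mathrm{Leg}\mathcal H)\setminus(D_\ell\cup\check O)$ and Theorem~\ref{thm:holomorphie-courbure-homogene-D-neq-D-ell} applies with $D=T'$.

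The heart of the argument is to show that, over this particular fibre, the sum appearing in Theorem~\ref{thm:holomorphie-courbure-homogene-D-neq-D-ell} collapses to the single contribution of $[-a:b]$. Each summand carries the prefactor $\bigl(1-\tfrac1{\nu_i}\bigr)(p_0b_i-a_i)$, which vanishes as soon as $\nu_i=1$ or $a_i=p_0b_i$. Now a point $[a_i:b_i]$ of the fibre satisfies $a_i=p_0b_i$ exactly when $[a_i:b_i]=[p_0:1]$, i.e. when it is a fixed point of $\Gunderline_{\mathcal H}$; hence the prefactor is non-zero only at non-fixed critical points of the fibre. By hypothesis $[-a:b]$ is the only such point, so every other term drops out: regular points are killed by $1-\tfrac1{\nu_i}=0$ and the (possible) fixed critical point $[p_0:1]$ by $p_0b_i-a_i=0$. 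This is the step I expect to be the main obstacle, since it is precisely here that the weakened hypothesis (\og only non-fixed critical point\fg{}) must be reconciled with the full sum, and the vanishing of the fixed-point contributions has to be attributed to the factor $p_0b_i-a_i$ rather than to the multiplicity.

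It then remains to read off and simplify the surviving term. Taking $(a_i,b_i)=(-a,b)$ and $\nu_i=\nu$ in~(\ref{equa:Pi-Qi}) gives $P_i(x,y;-a,b)=P(x,y;a,b)$ and $Q_i(x,y;-a,b)=Q(x,y;a,b)$, while the evaluation point $(b_i,a_i)=(b,-a)$ and the quantities $p_0b_i-a_i=p_0b+a$, $\alpha b_i+\beta a_i=\alpha b-\beta a$ match the corollary's notation. Hence the criterion of Theorem~\ref{thm:holomorphie-courbure-homogene-D-neq-D-ell} reduces to
\[
\Bigl(1-\tfrac1\nu\Bigr)(p_0b+a)\left(\frac{Q(b,-a;a,b)}{B(b,-a)P(b,-a;a,b)}+\frac{3\nu(\alpha+p_0\beta)}{\alpha b-\beta a}\right)=0.
\]
Here $1-\tfrac1\nu\neq0$ since $\nu\geq2$, and $p_0b+a\neq0$ since $[-a:b]$ is non-fixed (so $p_0\neq-a/b$); moreover $\alpha b-\beta a\neq0$ because the same hypothesis forces $[-\alpha:\beta]\neq[-a:b]$. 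I would therefore divide out $\bigl(1-\tfrac1\nu\bigr)(p_0b+a)$ and clear the denominator by multiplying by $B(b,-a)P(b,-a;a,b)(\alpha b-\beta a)$, obtaining
\[
(\alpha b-\beta a)Q(b,-a;a,b)+3\nu(\alpha+p_0\beta)B(b,-a)P(b,-a;a,b)=0.
\]
Finally, substituting $p_0=-A(b,-a)/B(b,-a)$ turns $(\alpha+p_0\beta)B(b,-a)$ into $\alpha B(b,-a)-\beta A(b,-a)$, which yields exactly the stated equation; since all the factors divided out are non-zero, each step is an equivalence, so the holomorphy of $K(\mathrm{Leg}\preh)$ along $T'$ is indeed characterised by the displayed identity.
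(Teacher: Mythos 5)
Your proposal is correct and follows essentially the same route as the paper: apply Théorème~\ref{thm:holomorphie-courbure-homogene-D-neq-D-ell} to $D=T'=\{p=p_0\}$ with $p_0=-A(b,-a)/B(b,-a)$ (after the standard reduction to $T'\neq L_\infty$), observe that only the single non-fixed critical point $[-a:b]$ contributes to the sum, and divide by the non-zero factor $\bigl(1-\tfrac1\nu\bigr)(p_0\hspace{0.2mm}b+a)$ before clearing denominators. Your explicit justification of why the other terms vanish (via $\nu_i=1$ or $p_0\hspace{0.2mm}b_i-a_i=0$ at fixed critical points) is left implicit in the paper but is exactly the intended argument.
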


\begin{proof}[\sl Proof]
Up to linear conjugation, we can assume that $T'\neq L_\infty$; then $T'$ has the equation $p=p_0$, where $p_0=-\frac{A(b,-a)}{B(b,-a)}$. According to Theorem~\ref{thm:holomorphie-courbure-homogene-D-neq-D-ell}, the curvature of $\Leg\preh$ is holomorphic on $T'$ if and only if
\begin{small}
\begin{align*}
\left(1-\tfrac{1}{\nu}\right)\left(p_0\hspace{0.2mm}b+a\right)
\left(\frac{Q(b,-a;a,b)}{B(b,-a)P(b,-a;a,b)}+\frac{3\nu(\alpha+p_0\,\beta)}{\alpha\,b-\beta\,a}\right)=0.
\end{align*}
\end{small}
\hspace{-1mm}Now, the hypothesis that the point $[-a:b]$ is not fixed by $\Gunderline_{\mathcal H}$ translates into $p_0\hspace{0.2mm}b+a\neq0.$ It follows that $K(\Leg\preh)$ is holomorphic on $T'$ if and only if
\begin{small}
\begin{align*}
\frac{Q(b,-a;a,b)}{P(b,-a;a,b)}+\frac{3\nu\big(\alpha\,B(b,-a)-\beta\,A(b,-a)\big)}{\alpha\,b-\beta\,a}=0,
\end{align*}
\end{small}
\hspace{-1mm}hence the corollary holds.
\end{proof}

\noindent In particular, we have:
\begin{cor}\label{cor:holomorphie-courbure-droite-inflex-maximal-d-2-T-neq-ell}
{\sl Let $\preh=\ell\boxtimes\mathcal{H}$ be a homogeneous pre-foliation of co-degree $1$ and degree $d\geq3$ on $\pp$, defined by the $1$-form
\begin{align*}
&\hspace{1.5cm}\omega=(\alpha\,x+\beta\,y)\left(A(x,y)\mathrm{d}x+B(x,y)\mathrm{d}y\right),\quad A,B\in\mathbb{C}[x,y]_{d-1},\hspace{2mm}\gcd(A,B)=1.
\end{align*}
Assume that $\mathcal{H}$ admits a transverse inflection line $T=(ax+by=0)$ of maximal order $d-2$ and that~$T\neq\ell.$ Then the curvature of $\mathrm{Leg}\preh$ is holomorphic along $T'=\mathcal{G}_{\mathcal{H}}(T)$ if and only~if
\[
(d-3)(\alpha\,b-\beta\,a)\Big(\partial_{x}B(b,-a)-\partial_{y}A(b,-a)\Big)+3(d-1)\Big(\alpha\,B(b,-a)-\beta\,A(b,-a)\Big)=0.
\]
}
\end{cor}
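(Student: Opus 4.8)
The plan is to obtain the statement as the maximal-order specialization of Corollary~\ref{cor:holomorphie-courbure-droite-inflex-nu-1-D-neq-D-ell}, taking the transverse inflection order of $T$ to be maximal, $\nu-1=d-2,$ i.e. $\nu=d-1.$ First I would check that the hypotheses of Corollary~\ref{cor:holomorphie-courbure-droite-inflex-nu-1-D-neq-D-ell} hold automatically here. Since $A,B\in\C[x,y]_{d-1}$ with $\pgcd(A,B)=1,$ the rational map $\Gunderline_{\mathcal H}\colon\mathbb{P}^{1}_{\C}\to\mathbb{P}^{1}_{\C}$ has no base point and is of degree $d-1.$ That $T=(ax+by=0)$ is a transverse inflection line of $\mathcal{H}$ of maximal order $d-2$ means precisely that $[-a:b]$ is a critical point of $\Gunderline_{\mathcal H}$ with ramification index $\nu=d-1=\deg\Gunderline_{\mathcal H}$; hence $[-a:b]$ is totally ramified and $\Gunderline_{\mathcal H}^{-1}\big(\Gunderline_{\mathcal H}([-a:b])\big)=\{[-a:b]\}.$ In particular $[-a:b]$ is trivially the only non-fixed critical point in its fibre, and, since $\ell=(\alpha x+\beta y=0)$ corresponds to $[-\alpha:\beta]$ while $T$ corresponds to $[-a:b],$ the hypothesis $T\neq\ell$ is exactly the remaining requirement $[-\alpha:\beta]\notin\Gunderline_{\mathcal H}^{-1}\big(\Gunderline_{\mathcal H}([-a:b])\big).$

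Next I would simplify the criterion of Corollary~\ref{cor:holomorphie-courbure-droite-inflex-nu-1-D-neq-D-ell} under $\nu=d-1.$ For this value $P(x,y;a,b)\in\C[x,y]_{d-\nu-1}=\C[x,y]_{0},$ so $P$ is a constant; it is moreover nonzero, since its numerator $A(x,y)B(b,-a)-A(b,-a)B(x,y)$ cannot vanish identically (otherwise $A$ and $B$ would be proportional, contradicting $\pgcd(A,B)=1$ with $d-1\geq2$). As $P$ is constant, $\partial_x P=\partial_y P=0,$ so the determinant term in the definition of $Q$ drops out and
\[
Q(x,y;a,b)=(\nu-2)\left(\partial_x B-\partial_y A\right)P(x,y;a,b)=(d-3)\left(\partial_x B-\partial_y A\right)P(x,y;a,b).
\]
Substituting this into the equation of Corollary~\ref{cor:holomorphie-courbure-droite-inflex-nu-1-D-neq-D-ell}, namely $(\alpha b-\beta a)Q(b,-a;a,b)+3\nu\big(\alpha B(b,-a)-\beta A(b,-a)\big)P(b,-a;a,b)=0,$ factors out the common nonzero constant $P$ and leaves
\[
(d-3)(\alpha b-\beta a)\big(\partial_x B(b,-a)-\partial_y A(b,-a)\big)+3(d-1)\big(\alpha B(b,-a)-\beta A(b,-a)\big)=0,
\]
which is the announced equation.

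The only genuinely delicate point is the bookkeeping of the first paragraph: one must be sure that maximal transverse order forces \emph{total} ramification, so that the fibre of $\Gunderline_{\mathcal H}$ over $\Gunderline_{\mathcal H}([-a:b])$ reduces to a single point and the two fibre hypotheses of Corollary~\ref{cor:holomorphie-courbure-droite-inflex-nu-1-D-neq-D-ell} both collapse to the single condition $T\neq\ell.$ Everything else is the mechanical substitution $\nu=d-1$ together with the vanishing of $\partial P.$ As a sanity check, for $d=3$ one has $\nu=2$ and $d-3=0,$ so $Q\equiv0$ and the criterion degenerates to $\alpha B(b,-a)-\beta A(b,-a)=0,$ in agreement with the displayed equation.
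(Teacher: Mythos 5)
Your proposal is correct and is exactly the derivation the paper intends: the corollary is stated as an immediate particular case ("En particulier") of Corollaire~\ref{cor:holomorphie-courbure-droite-inflex-nu-1-D-neq-D-ell}, obtained by setting $\nu=d-1$, noting that total ramification collapses the fibre to $\{[-a:b]\}$ so both fibre hypotheses reduce to $T\neq\ell$, and that $P$ becomes a nonzero constant, killing the determinant term in $Q$. Your write-up simply makes these routine verifications explicit.
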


\noindent The following theorem is an effective criterion for the holomorphy of the curvature of the web dual to a homogeneous pre-foliation $\preh=\ell\boxtimes\mathcal{H}$ (with $O\in\ell$) along the component~$D_{\ell}\subset\Delta(\Leg\preh).$
\begin{thm}\label{thm:holomorphie-courbure-homogene-D-ell}
{\sl
Let $\preh=\ell\boxtimes\mathcal{H}$ be a homogeneous pre-foliation of co-degree $1$ and degree $d\geq3$ on $\pp$, defined by the $1$-form
\begin{align*}
&\hspace{1.5cm}\omega=(\alpha\,x+\beta\,y)\left(A(x,y)\mathrm{d}x+B(x,y)\mathrm{d}y\right),\quad A,B\in\mathbb{C}[x,y]_{d-1},\hspace{2mm}\gcd(A,B)=1.
\end{align*}
Write $\Gunderline_{\mathcal H}^{-1}(\Gunderline_{\mathcal H}([-\alpha:\beta]))=\{[-\alpha:\beta],[a_1:b_1],\ldots,[a_n:b_n]\}$ and denote by $\nu_i$ (resp. $\nu_0$) the ramification index of $\Gunderline_{\mathcal H}$ at the point $[a_i:b_i]$ (resp. $[-\alpha:\beta]$). Define the polynomials $P_0\in\C[x,y]_{d-\nu_0-1}$ and $Q_0\in\C[x,y]_{2d-\nu_0-3}$ by
\begin{Small}
\begin{align*}
P_0(x,y;\alpha,\beta):=\frac{\left|
\begin{array}{cc}
A(x,y)  &  A(\beta,-\alpha)
\\
B(x,y)  &  B(\beta,-\alpha)
\end{array}
\right|}{(\alpha\,x+\beta\,y)^{\nu_0}}
\quad{\fontsize{11}{11pt}\text{and}}\quad
Q_0(x,y;\alpha,\beta):=(\nu_0-1)\left(\dfrac{\partial{B}}{\partial{x}}-\dfrac{\partial{A}}{\partial{y}}\right)P_0(x,y;\alpha,\beta)+(2\nu_0+1)
\left|\begin{array}{cc}
\dfrac{\partial{P_0}}{\partial{x}} &  A(x,y)
\vspace{2mm}
\\
\dfrac{\partial{P_0}}{\partial{y}} &  B(x,y)
\end{array} \right|.
\end{align*}
\end{Small}
\hspace{-1mm}Assume that $\Gunderline_{\mathcal H}([-\alpha:\beta])\neq\infty$ and let $p_0\in\C$ be such that $[p_0:1]=\Gunderline_{\mathcal H}([-\alpha:\beta]).$ Then the curvature of $\mathrm{Leg}\preh$ is holomorphic on $D_\ell$ if and only if
\begin{small}
\begin{align*}
\left(1+\frac{1}{\nu_{0}}\right)\frac{(\alpha+p_0\,\beta)Q_0(\beta,-\alpha;\alpha,\beta)}{B(\beta,-\alpha)P_0(\beta,-\alpha;\alpha,\beta)}+
\sum_{i=1}^{n}\left(1-\frac{1}{\nu_{i}}\right)\Big(p_0\hspace{0.2mm}b_i-a_i\Big)
\left(\frac{Q_i(b_i,a_i;a_i,b_i)}{B(b_i,a_i)P_i(b_i,a_i;a_i,b_i)}+\frac{3\nu_i(\alpha+p_0\,\beta)}{\alpha\,b_i+\beta\,a_i}\right)=0,
\end{align*}
\end{small}
\hspace{-1mm}where the $P_i$'s and the $Q_i$'s ($i=1,\ldots,n$) are the polynomials given by~(\ref{equa:Pi-Qi}).
}
\end{thm}

\noindent Note that the $d$-web $\Leg\preh=\Leg\ell\boxtimes\Leg\mathcal{H}$ is not smooth along the component $D_\ell\subset\Tang(\Leg\ell,\Leg\mathcal{H})$ and therefore we cannot apply Theorem~2.1 of \cite{BM22arxiv} to $\Leg\preh$ as we did in the proof of Theorem~\ref{thm:holomorphie-courbure-homogene-D-neq-D-ell}. To prove Theorem~\ref{thm:holomorphie-courbure-homogene-D-ell}, we will first establish, for a foliation $\F$ and a web $\W$ smooth along an irreducible component $D$ of $\Tang(\F,\W),$ an effective criterion for the holomorphy of the curvature of $\F\boxtimes\W$ along $D.$
\begin{thm}\label{thm-critere-holomorphie-FW}
{\sl Let $\W$ be a holomorphic $(d-1)$-web on a complex surface $M.$ Let $\F$ be a holomorphic foliation on $M.$ Assume that $\W$ is smooth along an irreducible component $D$ of $\Tang(\F,\W).$ Then the fundamental form $\eta(\F\boxtimes\W)$ has simple poles along $D.$ More precisely, choose a local coordinate system $(x,y)$ on $M$ such that $D=\{y=0\}$ and let $F(x,y,p)=0,$ $p=\frac{\mathrm{d}y}{\mathrm{d}x},$ be an implicit differential equation defining $\W.$ Write $F(x,0,p)=a_{0}(x)\prod\limits_{\alpha=1}^{n}(p-\varphi_{\alpha}(x))^{\nu_{\alpha}},$ with $\varphi_{\alpha}\not\equiv\varphi_{\beta}$ if~$\alpha\neq\beta,$ and assume that $\F$ is given by a $1$-form $\omega$ of type $\omega=\mathrm{d}y-\left(\varphi_{1}(x)+yf(x,y)\right)\mathrm{d}x.$ Define $Q(x,p)$ by $F(x,0,p)=(p-\varphi_{1}(x))^{\nu_{1}}Q(x,p)$ and put
\begin{Small}
\begin{align*}
&
h(x)=\frac{1}{\nu_{1}}
\left[
(\nu_{1}-1)\left(d-1-\varphi_{1}(x)\dfrac{\partial_{p}\partial_{y}F(x,0,\varphi_{1}(x))+
2\delta_{\nu_1,2}f(x,0)Q(x,\varphi_{1}(x))}{\partial_{y}F(x,0,\varphi_{1}(x))}\right)
-(2\nu_{1}+1)\sum_{\alpha=2}^{n}\frac{\nu_{\alpha}\varphi_{\alpha}(x)}{\varphi_{1}(x)-\varphi_{\alpha}(x)}
\right]
\end{align*}
\end{Small}
\hspace{-1mm}(where $\delta_{\nu_1,2}=1$ if $\nu_1=2$ and $0$ otherwise). Let $\psi_{\alpha}$ be a function of the coordinate $x$ defined, for all $\alpha\in\{1,\ldots,n\}$ such that $\nu_{\alpha}\geq2,$ by
\begin{align*}
\psi_{\alpha}(x)=\frac{1}{\nu_{\alpha}}
\left[
(\nu_{\alpha}-2)
\left(
d-1-\varphi_{\alpha}(x)\dfrac{\partial_{p}\partial_{y}F\big(x,0,\varphi_{\alpha}(x)\big)}{\partial_{y}F\big(x,0,\varphi_{\alpha}(x)\big)}
\right)
-2(\nu_{\alpha}+1)\hspace{-3.5mm}\sum\limits_{\hspace{3.5mm}\beta=1,\beta\neq\alpha}^{n}\frac{\nu_{\beta}\varphi_{\beta}(x)}{\varphi_{\alpha}(x)-\varphi_{\beta}(x)}
\right].
\end{align*}
Then the $1$-form $\eta(\F\boxtimes\W)-\frac{\theta}{6y}$ is holomorphic along $D=\{y=0\}$, where
\begin{Small}
\begin{align*}
\theta=(\nu_1+1)\left[h(x)\Big(\mathrm{d}y-\varphi_1(x)\mathrm{d}x\Big)+(\nu_1-1)\mathrm{d}y\right]+
\sum_{\alpha=2}^{n}(\nu_\alpha-1)\left[\left(\psi_\alpha(x)+\frac{3\varphi_1(x)}{\varphi_1(x)-\varphi_\alpha(x)}\right)
\big(\mathrm{d}y-\varphi_\alpha(x)\mathrm{d}x\big)+(\nu_\alpha-2)\mathrm{d}y\right].
\end{align*}
\end{Small}
\hspace{-1mm}In particular, the curvature $K(\F\boxtimes\W)$ is holomorphic along $D$ if and only if
\begin{align*}
&\hspace{-4cm}
(\nu_1+1)\varphi_1(x)h(x)+
\sum_{\alpha=2}^{n}(\nu_{\alpha}-1)\varphi_{\alpha}(x)\left(\psi_{\alpha}(x)+\frac{3\varphi_{1}(x)}{\varphi_{1}(x)-\varphi_{\alpha}(x)}\right)\equiv0
\\
&\hspace{-4.95cm}\text{and}
\\
&\hspace{-4cm}
\frac{\mathrm{d}}{\mathrm{d}x}\left((\nu_1+1)h(x)+
\sum_{\alpha=2}^{n}(\nu_{\alpha}-1)\left(\psi_{\alpha}(x)+\frac{3\varphi_{1}(x)}{\varphi_{1}(x)-\varphi_{\alpha}(x)}\right)\right)\equiv0.
\end{align*}
}
\end{thm}

\begin{proof}[\sl Proof]
In a neighborhood of a generic point $m$ of $D$, the web $\W$ decomposes as $\W=\boxtimes_{\alpha=1}^{n}\W_{\alpha},$ where~$\W_{\alpha}=\boxtimes_{i=1}^{\nu_{\alpha}}\F_{i}^{\alpha}$ and $\F_{i}^{\alpha}|_{y=0}:\mathrm{d}y-\varphi_{\alpha}(x)\mathrm{d}x=0.$ Then $\eta(\F\boxtimes\W)=\eta(\W)+\eta_1+\eta_2+\eta_3+\eta_4,$ where
\begin{small}
\begin{align*}
&
\eta_1
=\sum_{1\le i<j\le\nu_{1}}\eta(\F\boxtimes\F_{i}^{1}\boxtimes\F_{j}^{1}),
&&
\eta_2=\sum_{\alpha=2}^{n}
\sum_{\raisebox{2mm}{${\underset{1\le j\le\nu_{\alpha}}{\underset{1\le i\le\nu_{1}}{}}}$}}\eta(\F\boxtimes\F_{i}^{1}\boxtimes\F_{j}^{\alpha}),
\\
&
\eta_3=\sum_{\alpha=2}^{n}\sum_{1\le i<j\le\nu_{\alpha}}\eta(\F\boxtimes\F_{i}^{\alpha}\boxtimes\F_{j}^{\alpha}),
&&
\eta_4=\sum_{2\le\alpha<\beta\le n}
\sum_{\raisebox{2mm}{${\underset{1\le j\le\nu_{\beta}}{\underset{1\le i\le\nu_{\alpha}}{}}}$}}\eta(\F\boxtimes\F_{i}^{\alpha}\boxtimes\F_{j}^{\beta}).
\end{align*}
\end{small}
\hspace{-1mm}According to~\cite[Theorem~2.1]{BM22arxiv}, the principal part of the \textsc{Laurent} series of $\eta(\W)$ at $y=0$ is given by $\frac{\theta_0}{y}$, where
\begin{align*}
\theta_0=\frac{1}{6}\sum\limits_{\alpha=1}^{n}\big(\nu_{\alpha}-1\big)\Big[\psi_{\alpha}(x)\big(\mathrm{d}y-\varphi_{\alpha}(x)\mathrm{d}x\big)
+\big(\nu_{\alpha}-2\big)\mathrm{d}y\Big].
\end{align*}

\noindent As for the $1$-forms $\eta_1,\ldots,\eta_4$, first note that, as in the proof of \cite[Theorem~2.1]{BM22arxiv}, the slope $p_{\hspace{-0.2mm}j}$ ($j=1,\ldots,\nu_{\alpha}$) of $\mathrm{T}_{(x,y)}\F_{j}^{\alpha}$ can be written as
\begin{align*}
&\hspace{1cm}p_{\hspace{-0.2mm}j}=\lambda_{\alpha,j}(x,y):=\varphi_{\alpha}(x)+\sum_{k\geq1}f_{\alpha,k}(x)\zeta_{\alpha}^{jk}y^{\frac{k}{\nu_{\alpha}}},
\quad\text{where}\hspace{1mm}
f_{\alpha,k}\in\C\{x\},
\end{align*}
with $f_{\alpha,1}\not\equiv0$ and $\mathrm{\zeta_{\alpha}}=\exp(\tfrac{2\mathrm{i}\pi}{\nu_{\alpha}}).$ Moreover, for $\alpha=1$, if $\nu_1\geq2,$ then
\begin{align}\label{equa:preuve-thm-nu1}
&\hspace{-3.6cm}(f_{1,1}(x))^{\nu_1}=-\frac{\partial_{y}F\big(x,0,\varphi_{1}(x)\big)}{Q(x,\varphi_{1}(x))}
\end{align}
and, for all $\alpha\in\{1,\ldots,n\}$ such that $\nu_\alpha\geq2$, we have
\begin{align}\label{equa:preuve-thm-nu-alpha}
&\hspace{1.4cm}\frac{f_{\alpha,2}(x)}{(f_{\alpha,1}(x))^2}
=\frac{1}{\nu_{\alpha}}\left[\dfrac{\partial_{p}\partial_{y}F\big(x,0,\varphi_{\alpha}(x)\big)}{\partial_{y}F\big(x,0,\varphi_{\alpha}(x)\big)}
-\hspace{-3.5mm}\sum\limits_{\hspace{3.5mm}\beta=1,\beta\neq\alpha}^{n}\frac{\nu_{\beta}}{\varphi_{\alpha}(x)-\varphi_{\beta}(x)}\right].
\end{align}

\noindent Put $\lambda_0(x,y)=\varphi_{1}(x)+yf(x,y)$; according to~\cite[Lemma~2.8]{BM22arxiv}, we have $\eta(\F\boxtimes\F_{i}^{1}\boxtimes\F_{j}^{1})=a_{i,j}(x,y)\mathrm{d}x+b_{i,j}(x,y)\mathrm{d}y,$ where
\begin{Small}
\begin{align*}
&\hspace{-3.3cm}a_{i,j}=
-\frac{\left(\partial_y(\lambda_{1,i}\lambda_{1,j})-\partial_x\lambda_{0}\right)\lambda_{0}}{(\lambda_{1,i}-\lambda_{0})(\lambda_{1,j}-\lambda_{0})}
-\frac{\left(\partial_y(\lambda_{1,i}\lambda_{0})-\partial_x\lambda_{1,j}\right)\lambda_{1,j}}{(\lambda_{1,i}-\lambda_{1,j})(\lambda_{0}-\lambda_{1,j})}
-\frac{\left(\partial_y(\lambda_{1,j}\lambda_{0})-\partial_x\lambda_{1,i}\right)\lambda_{1,i}}{(\lambda_{1,j}-\lambda_{1,i})(\lambda_{0}-\lambda_{1,i})}
\\
&\hspace{-4.3cm}{\fontsize{11}{11pt}\text{and}}
\\
&\hspace{-3.3cm}b_{i,j}=
\frac{\partial_y(\lambda_{1,i}\lambda_{1,j})-\partial_x\lambda_{0}}{(\lambda_{1,i}-\lambda_{0})(\lambda_{1,j}-\lambda_{0})}
+\frac{\partial_y(\lambda_{1,i}\lambda_{0})-\partial_x\lambda_{1,j}}{(\lambda_{1,i}-\lambda_{1,j})(\lambda_{0}-\lambda_{1,j})}
+\frac{\partial_y(\lambda_{1,j}\lambda_{0})-\partial_x\lambda_{1,i}}{(\lambda_{1,j}-\lambda_{1,i})(\lambda_{0}-\lambda_{1,i})}.
\end{align*}
\end{Small}
\hspace{-1.2mm}Writing $f(x,y)=\sum_{k\geq0}f_{0,k}(x)y^k$ and putting $w_{1}=y^{\frac{1}{\nu_{1}}}$, a straightforward computation leads to the following equalities:
\begin{Small}
\begin{align*}
&\hspace{-1.6cm}\partial_y(\lambda_{1,i}\lambda_{1,j})-\partial_x\lambda_{0}=\frac{1}{\nu_1y}
\left[
(\zeta_{1}^{i}+\zeta_{1}^{j})\varphi_{1}f_{1,1}w_1+
2\left(\zeta_{1}^{i+j}f_{1,1}^{2}+(\zeta_{1}^{2i}+\zeta_{1}^{2j})\varphi_{1}f_{1,2}-\delta_{\nu_1,2}\varphi_{1}^{'}\right)w_{1}^{2}+\cdots
\right],\\
&\hspace{-1.6cm}\partial_y(\lambda_{1,i}\lambda_{0})-\partial_x\lambda_{1,j}=\frac{1}{\nu_1y}
\left[
\zeta_{1}^{i}\varphi_1f_{1,1}w_1+2\left(\zeta_{1}^{2i}\varphi_1f_{1,2}+(\varphi_1f_{0,0}-\varphi_{1}^{'})\delta_{\nu_1,2}\right)w_{1}^{2}+\cdots
\right],\\
&\hspace{-1.6cm}(\lambda_{1,i}-\lambda_{0})(\lambda_{1,j}-\lambda_{0})=\zeta_{1}^{i+j}f_{1,1}^{2}w_{1}^{2}+(\zeta_{1}^{2i+j}+\zeta_{1}^{i+2j})f_{1,1}f_{1,2}w_{1}^{3}+\cdots,\\
&\hspace{-1.6cm}(\lambda_{1,i}-\lambda_{1,j})(\lambda_{0}-\lambda_{1,j})=(\zeta_{1}^{2j}-\zeta_{1}^{i+j})f_{1,1}^{2}w_{1}^{2}+
f_{1,1}\left(\big(2\zeta_{1}^{3j}-\zeta_{1}^{2i+j}-\zeta_{1}^{i+2j}\big)f_{1,2}-2\delta_{\nu_1,2}f_{0,0}\right)w_{1}^{3}+\cdots.
\end{align*}
\end{Small}
\hspace{-1mm}These equalities allow us to check that $a_{i,j}$ and $b_{i,j}$ can be written as
\begin{small}
\begin{align*}
&\hspace{-0.6cm}a_{i,j}=\frac{\varphi_{1}\left(\varphi_{1}f_{1,2}-f_{1,1}^{2}-\delta_{\nu_1,2}\varphi_{1}f_{0,0}\right)+w_1A_{i,j}}{\nu_1yf_{1,1}^{2}},
&&
b_{i,j}=\frac{2f_{1,1}^{2}-\varphi_{1}f_{1,2}+\delta_{\nu_1,2}\varphi_{1}f_{0,0}+w_1B_{i,j}}{\nu_1yf_{1,1}^{2}},
\end{align*}
\end{small}
\hspace{-1mm}where $A_{i,j},B_{i,j}\in\C\{x,w_{1}\}.$ Since $\eta_1$ is a uniform and meromorphic $1$-form, we deduce that the principal part of the \textsc{Laurent} series of $\eta_1$ at $y=0$ is given by $\frac{\theta_1}{y}$, where
\begin{SMALL}
\begin{align*}
\theta_1&=\binom{{\nu_{1}}}{{2}}
\left(\frac{\varphi_{1}(x)\Big(\varphi_{1}(x)f_{1,2}(x)-f_{1,1}(x)^{2}-\delta_{\nu_1,2}\varphi_{1}(x)f_{0,0}(x)\Big)}{\nu_1f_{1,1}(x)^{2}}\mathrm{d}x+
\frac{2f_{1,1}(x)^{2}-\varphi_{1}(x)f_{1,2}(x)+\delta_{\nu_1,2}\varphi_{1}(x)f_{0,0}(x)}{\nu_1f_{1,1}(x)^{2}}\mathrm{d}y
\right)
\\
&=\frac{1}{2}(\nu_{1}-1)
\left[
\varphi_{1}(x)
\left(\frac{\delta_{\nu_1,2}f_{0,0}(x)}{f_{1,1}(x)^{2}}-\frac{f_{1,2}(x)}{f_{1,1}(x)^{2}}\right)\big(\mathrm{d}y-\varphi_{1}(x)\mathrm{d}x\big)+2\mathrm{d}y-\varphi_{1}(x)\mathrm{d}x
\right].
\end{align*}
\end{SMALL}
\hspace{-1mm}Thanks to~(\ref{equa:preuve-thm-nu1}), (\ref{equa:preuve-thm-nu-alpha}) and the equality $f_{0,0}(x)=f(x,0)$, the $1$-form $\theta_1$ can be rewritten as
\begin{SMALL}
\begin{align*}
\theta_1=\frac{1}{2}\left(1-\frac{1}{\nu_1}\right)
\left(d-1-\varphi_{1}(x)\dfrac{\partial_{p}\partial_{y}F(x,0,\varphi_{1}(x))+2\delta_{\nu_1,2}f(x,0)Q(x,\varphi_{1}(x))}{\partial_{y}F(x,0,\varphi_{1}(x))}
+\sum_{\alpha=2}^{n}\frac{\nu_{\alpha}\varphi_{\alpha}(x)}{\varphi_{1}(x)-\varphi_{\alpha}(x)}
\right)\big(\mathrm{d}y-\varphi_{1}(x)\mathrm{d}x\big)+\frac{1}{2}(\nu_1-1)\mathrm{d}y.
\end{align*}
\end{SMALL}

\noindent Let us now pass to $\eta_2.$ Put $w_{\alpha,1}=y^{\frac{1}{\nu_{1}\nu_{\alpha}}}$; again by \cite[Lemma~2.8]{BM22arxiv}, we have $\eta(\F\boxtimes\F_{i}^{1}\boxtimes\F_{j}^{\alpha})=a_{i,j}^{\alpha}(x,y)\mathrm{d}x+b_{i,j}^{\alpha}(x,y)\mathrm{d}y,$ where
\begin{Small}
\begin{align*}
\hspace{-1.6cm}a_{i,j}^{\alpha}&=
-\frac{\left(\partial_y(\lambda_{1,i}\lambda_{\alpha,j})-\partial_x\lambda_{0}\right)\lambda_{0}}{(\lambda_{1,i}-\lambda_{0})(\lambda_{\alpha,j}-\lambda_{0})}
-\frac{\left(\partial_y(\lambda_{1,i}\lambda_{0})-\partial_x\lambda_{\alpha,j}\right)\lambda_{\alpha,j}}{(\lambda_{1,i}-\lambda_{\alpha,j})(\lambda_{0}-\lambda_{\alpha,j})}
-\frac{\left(\partial_y(\lambda_{\alpha,j}\lambda_{0})-\partial_x\lambda_{1,i}\right)\lambda_{1,i}}{(\lambda_{\alpha,j}-\lambda_{1,i})(\lambda_{0}-\lambda_{1,i})}\\
\hspace{-1.6cm}&=\frac{1}{\nu_{1}y}
\left(\frac{\varphi_{1}\varphi_{\alpha}}{\varphi_{1}-\varphi_{\alpha}}+w_{\alpha,1}A_{i,j}^{\alpha}\right)
\end{align*}
\end{Small}
\hspace{-1mm}and
\begin{Small}
\begin{align*}
\hspace{-2.2cm}b_{i,j}^{\alpha}&=
\frac{\partial_y(\lambda_{1,i}\lambda_{\alpha,j})-\partial_x\lambda_{0}}{(\lambda_{1,i}-\lambda_{0})(\lambda_{\alpha,j}-\lambda_{0})}+
\frac{\partial_y(\lambda_{1,i}\lambda_{0})-\partial_x\lambda_{\alpha,j}}{(\lambda_{1,i}-\lambda_{\alpha,j})(\lambda_{0}-\lambda_{\alpha,j})}+
\frac{\partial_y(\lambda_{\alpha,j}\lambda_{0})-\partial_x\lambda_{1,i}}{(\lambda_{\alpha,j}-\lambda_{1,i})(\lambda_{0}-\lambda_{1,i})}\\
\hspace{-2.2cm}&=-\frac{1}{\nu_{1}y}
\left(\frac{\varphi_{\alpha}}{\varphi_{1}-\varphi_{\alpha}}+w_{\alpha,1}B_{i,j}^{\alpha}\right),
\end{align*}
\end{Small}
\hspace{-1mm}where $A_{i,j}^{\alpha},B_{i,j}^{\alpha}\in\C\{x,w_{\alpha,1}\}.$ The $1$-form $\eta_2$ being uniform and meromorphic, it follows that the principal part of the \textsc{Laurent} series of $\eta_2$ at $y=0$ is given by $\frac{\theta_2}{y}$, where
\begin{align*}
\theta_2&=
\sum_{\alpha=2}^{n}\nu_{1}\nu_{\alpha}
\left(\frac{\varphi_{1}(x)\varphi_{\alpha}(x)}{\nu_{1}\left(\varphi_{1}(x)-\varphi_{\alpha}(x)\right)}\mathrm{d}x-\frac{\varphi_{\alpha}(x)}{\nu_{1}\left(\varphi_{1}(x)-\varphi_{\alpha}(x)\right)}\mathrm{d}y\right)\\
&=-(\mathrm{d}y-\varphi_{1}(x)\mathrm{d}x)\sum_{\alpha=2}^{n}\frac{\nu_{\alpha}\varphi_{\alpha}(x)}{\varphi_{1}(x)-\varphi_{\alpha}(x)}.
\end{align*}

\noindent Similarly, putting $w_{\alpha}=y^{\frac{1}{\nu_{\alpha}}}$ and using~\cite[Lemma~2.8]{BM22arxiv}, we obtain that
\begin{small}
\begin{align*}
\eta(\F\boxtimes\F_{i}^{\alpha}\boxtimes\F_{j}^{\alpha})=\frac{1}{\nu_{\alpha}y}
\left[
\left(-\frac{\varphi_{1}(x)\varphi_{\alpha}(x)}{\varphi_{1}(x)-\varphi_{\alpha}(x)}+w_{\alpha}\,\tilde{A}_{i,j}^{\alpha}(x,w_{\alpha})\right)\mathrm{d}x+
\left(\frac{\varphi_{1}(x)}{\varphi_{1}(x)-\varphi_{\alpha}(x)}+w_{\alpha}\,\tilde{B}_{i,j}^{\alpha}(x,w_{\alpha})\right)\mathrm{d}y
\right],
\end{align*}
\end{small}
\hspace{-1mm}where $\tilde{A}_{i,j}^{\alpha},\tilde{B}_{i,j}^{\alpha}\in\C\{x,w_{\alpha}\}$, so that the principal part of the \textsc{Laurent} series of $\eta_3$ at $y=0$ is given by $\frac{\theta_3}{y}$, where
\begin{align*}
\theta_3&=
\sum_{\alpha=2}^{n}\binom{{\nu_{\alpha}}}{{2}}\left(
-\frac{\varphi_{1}(x)\varphi_{\alpha}(x)}{\nu_{\alpha}\left(\varphi_{1}(x)-\varphi_{\alpha}(x)\right)}\mathrm{d}x+
\frac{\varphi_{1}(x)}{\nu_{\alpha}\left(\varphi_{1}(x)-\varphi_{\alpha}(x)\right)}\mathrm{d}y
\right)\\
&=\frac{1}{2}\sum_{\alpha=2}^{n}\frac{(\nu_{\alpha}-1)\varphi_1(x)\left(\mathrm{d}y-\varphi_{\alpha}(x)\mathrm{d}x\right)}{\varphi_1(x)-\varphi_{\alpha}(x)}.
\end{align*}

\noindent Finally, since $(\varphi_{1}-\varphi_{\alpha})(\varphi_{\alpha}-\varphi_{\beta})(\varphi_{\beta}-\varphi_{1})\not\equiv0$ for all $\beta>\alpha\geq2$, \cite[Lemma~2.8]{BM22arxiv} implies that the $1$-form $\eta(\F\boxtimes\F_{i}^{\alpha}\boxtimes\F_{j}^{\beta})$ has no poles along $y=0$;
therefore the same is true for the $1$-form $\eta_4.$

\noindent As a result, the principal part of the \textsc{Laurent} series of $\eta(\F\boxtimes\W)$ at $y=0$ is given by $\frac{\widehat{\theta}}{y}$, where
\begin{Small}
\begin{align*}
\widehat{\theta}&=\theta_0+\theta_1+\theta_2+\theta_3\\
&=\frac{1}{6}\Big((\nu_1+1)h(x)-(\nu_1-1)\psi_{1}(x)\Big)\big(\mathrm{d}y-\varphi_{1}(x)\mathrm{d}x\big)+\frac{1}{2}(\nu_1-1)\mathrm{d}y+
\frac{1}{6}\sum\limits_{\alpha=1}^{n}\big(\nu_{\alpha}-1\big)\Big[\psi_{\alpha}(x)\big(\mathrm{d}y-\varphi_{\alpha}(x)\mathrm{d}x\big)
+\big(\nu_{\alpha}-2\big)\mathrm{d}y\Big]\\
&\hspace{4.5mm}+\frac{1}{2}\sum_{\alpha=2}^{n}\frac{(\nu_{\alpha}-1)\varphi_1(x)\left(\mathrm{d}y-\varphi_{\alpha}(x)\mathrm{d}x\right)}{\varphi_1(x)-\varphi_{\alpha}(x)}\\
&=\frac{1}{6}(\nu_1+1)\left[h(x)\Big(\mathrm{d}y-\varphi_1(x)\mathrm{d}x\Big)+(\nu_1-1)\mathrm{d}y\right]+
\frac{1}{6}\sum_{\alpha=2}^{n}(\nu_\alpha-1)\left[\left(\psi_\alpha(x)+\frac{3\varphi_1(x)}{\varphi_1(x)-\varphi_\alpha(x)}\right)
\big(\mathrm{d}y-\varphi_\alpha(x)\mathrm{d}x\big)+(\nu_\alpha-2)\mathrm{d}y\right]\\
&=\frac{\theta}{6},
\end{align*}
\end{Small}
\hspace{-1mm}hence the theorem follows.
\end{proof}

\begin{proof}[\sl Proof of Theorem~\ref{thm:holomorphie-courbure-homogene-D-ell}]
As in the proof of Theorem~\ref{thm:holomorphie-courbure-homogene-D-neq-D-ell}, up to linear conjugation, we can assume that $\beta\neq0$ and $b_i\neq0$ for all $i\in\{1,\ldots,n\}.$ Then, by putting $r_0:=-\frac{\alpha}{\beta}$ and~$r_i:=\frac{a_i}{b_i}$ for $i\in\{1,\ldots,n\},$ \cite[Lemma~3.5]{BM22arxiv} implies the existence of a constant $c\in\C^*$ such that $$-A(1,z)=p_0B(1,z)-c\prod_{i=0}^{n}(z-r_i)^{\nu_i}.$$

\noindent Since $A,B\in\C[x,y]_{d-1},$ the differential equation~(\ref{equa:LegH}) describing $\Leg\preh$ in the affine chart $(p,q)$ then becomes
\begin{align*}
x^{d}\Big(p-\tfrac{q}{x}-r_0\Big)\Big((p-p_0)B(1,p-\tfrac{q}{x})+c\prod_{i=0}^{n}(p-\tfrac{q}{x}-r_i)^{\nu_i}\Big)=0, \qquad \text{with} \qquad x=\frac{\mathrm{d}q}{\mathrm{d}p}.
\end{align*}
Put $\check{x}:=q$,\, $\check{y}:=p-p_0$\, and \,$\check{p}:=\dfrac{\mathrm{d}\check{y}}{\mathrm{d}\check{x}}=\dfrac{1}{x}$; in this new coordinate system $D_\ell=\{\check{y}=0\}$ and $\Leg\preh=\Leg\ell\boxtimes\Leg\mathcal{H}$ is given by the differential equation $(\check{y}+p_0-\check{p}\check{x}-r_0)F(\check{x},\check{y},\check{p})=0$, where
\begin{align*}
F(\check{x},\check{y},\check{p})=\check{y}B(1,\check{y}+p_0-\check{p}\check{x})+c\prod_{i=0}^{n}(\check{y}+p_0-\check{p}\check{x}-r_i)^{\nu_i}.
\end{align*}
We have $F(\check{x},0,\check{p})=c(-\check{x})^{d-1}\prod_{i=0}^{n}\big(\check{p}-\varphi_{i}(\check{x})\big)^{\nu_i}$, where $\varphi_{i}(\check{x})=\frac{p_0-r_i}{\check{x}}.$ Furthermore the radial foliation $\Leg\ell$ is described by $\check{\omega}_0=\mathrm{d}\check{y}-\big(\varphi_{0}(\check{x})+\frac{\check{y}}{\check{x}}\big)\mathrm{d}\check{x}$; in particular we have $D_\ell\subset\Tang(\Leg\ell,\Leg\mathcal{H}).$ Note that if $\nu_i\geq2$, then $\partial_{\check{y}}F\big(\check{x},0,\varphi_{i}(\check{x})\big)=B(1,r_i)\neq0$; since $\partial_{\check{p}}F\big(\check{x},0,\varphi_{i}(\check{x})\big)\not\equiv0$ if $\nu_i=1$, it follows that the surface
\begin{align*}
S_{\Leg\mathcal{H}}:=\left\{(\check{x},\check{y},\check{p})\in\mathbb{P}\mathrm{T}^{*}\pd\hspace{1mm}\vert\hspace{1mm}F(\check{x},\check{y},\check{p})=0\right\}
\end{align*}
is smooth along $D_\ell=\{\check{y}=0\}.$ Therefore, according to Theorem~\ref{thm-critere-holomorphie-FW}, the curvature of $\Leg\preh$ is holomorphic on~$D_\ell=\{\check{y}=0\}$ if and only if
\begin{small}
\begin{align*}
&\hspace{-5cm}
(\nu_0+1)\varphi_0(\check{x})h(\check{x})+
\sum_{i=1}^{n}(\nu_{i}-1)\varphi_{i}(\check{x})\left(\psi_{i}(\check{x})+\frac{3\varphi_{0}(\check{x})}{\varphi_{0}(\check{x})-\varphi_{i}(\check{x})}\right)\equiv0
\\
&\hspace{-6.14cm}{\fontsize{11}{11pt}\text{and}}
\\
&\hspace{-5cm}
\frac{\mathrm{d}}{\mathrm{d}\check{x}}\left((\nu_0+1)h(\check{x})+
\sum_{i=1}^{n}(\nu_{i}-1)\left(\psi_{i}(\check{x})+\frac{3\varphi_{0}(\check{x})}{\varphi_{0}(\check{x})-\varphi_{i}(\check{x})}\right)\right)\equiv0,
\end{align*}
\end{small}
\hspace{-1mm}where
\begin{SMALL}
\begin{align*}
&\hspace{0.6cm}
h(\check{x})=\frac{1}{\nu_{0}}
\left[
(\nu_{0}-1)\left(d-1-\varphi_{0}(\check{x})\dfrac{\partial_{\check{p}}\partial_{\check{y}}F\big(\check{x},0,\varphi_{0}(\check{x})\big)
-2c\hspace{0.2mm}\delta_{\nu_0,2}(-\check{x})^{d-2}\prod_{j=1}^{n}\big(\varphi_{0}(\check{x})-\varphi_{j}(\check{x})\big)^{\nu_j}}{\partial_{\check{y}}F\big(\check{x},0,\varphi_{0}(\check{x})\big)}\right)
-(2\nu_{0}+1)\sum\limits_{j=1}^{n}\frac{\nu_{j}\varphi_{j}(\check{x})}{\varphi_{0}(\check{x})-\varphi_{j}(\check{x})}
\right]
\end{align*}
\end{SMALL}
\hspace{-1mm}and, for all $i\in\{1,\ldots,n\}$ such that $\nu_i\geq2,$
\begin{SMALL}
\begin{align*}
&\hspace{-3.8cm}
\psi_{i}(\check{x})=\frac{1}{\nu_{i}}
\left[
(\nu_{i}-2)\left(d-1-\varphi_{i}(\check{x})\dfrac{\partial_{\check{p}}\partial_{\check{y}}F\big(\check{x},0,\varphi_{i}(\check{x})\big)}{\partial_{\check{y}}F\big(\check{x},0,\varphi_{i}(\check{x})\big)}\right)
-2(\nu_{i}+1)\hspace{-3.5mm}\sum\limits_{\hspace{3.5mm}j=0,j\neq i}^{n}\frac{\nu_{j}\varphi_{j}(\check{x})}{\varphi_{i}(\check{x})-\varphi_{j}(\check{x})}
\right].
\end{align*}
\end{SMALL}
\hspace{-1mm}Now, if $\nu_i\geq2$ then
$\partial_{\check{p}}\partial_{\check{y}}F\big(\check{x},0,\varphi_{i}(\check{x})\big)=-\check{x}\Big(\partial_{y}B(1,r_i)+2c\hspace{0.2mm}\delta_{\nu_i,2}\prod\limits_{j=0,j\neq i}^{n}\big(r_i-r_j\big)^{\nu_{j}}\Big).$ From this we deduce that
\begin{Small}
\begin{align*}
&\hspace{-2.6cm}
h(\check{x})=h_0:=\frac{1}{\nu_{0}}
\left[
(\nu_{0}-1)\left(d-1+\frac{(p_0-r_0)\partial_{y}B(1,r_0)}{B(1,r_0)}\right)
+(2\nu_{0}+1)\sum\limits_{j=1}^{n}\frac{\nu_{j}(p_0-r_j)}{r_0-r_j}
\right]
\\
&\hspace{-3.85cm}{\fontsize{11}{11pt}\text{and}}
\\
&\hspace{-2.6cm}
\psi_{i}(\check{x})=\psi_i:=\frac{1}{\nu_{i}}
\left[
(\nu_{i}-2)\left(d-1+\frac{(p_0-r_i)\partial_{y}B(1,r_i)}{B(1,r_i)}\right)
+2(\nu_{i}+1)\hspace{-3.5mm}\sum\limits_{\hspace{3.5mm}j=0,j\neq i}^{n}\frac{\nu_{j}(p_0-r_j)}{r_i-r_j}
\right].
\end{align*}
\end{Small}
\hspace{-1mm}Thus $K(\Leg\preh)$ is holomorphic along $D_\ell=\{\check{y}=0\}$ if and only if
\begin{align*}
(\nu_0+1)(p_0-r_0)h_0+\sum_{i=1}^{n}(\nu_i-1)(p_0-r_i)\left(\psi_i+\tfrac{3(p_0-r_0)}{r_i-r_0}\right)=0.
\end{align*}

\noindent Moreover, we have (\emph{cf.} proof of~\cite[Theorem~3.1]{BM22arxiv})
\begin{SMALL}
\begin{align*}
&
\sum\limits_{j=1}^{n}\frac{\nu_{j}(p_0-r_j)}{r_0-r_j}
=\frac{\left|\begin{array}{cc}
\partial_{x}P_0(1,r_0;-r_0,1) &  A(1,r_0)
\vspace{2mm}
\\
\partial_{y}P_0(1,r_0;-r_0,1) &  B(1,r_0)
\end{array}\right|}{B(1,r_0)P_0(1,r_0;-r_0,1)},
&&
\hspace{-3.5mm}\sum\limits_{\hspace{3.5mm}j=0,j\neq i}^{n}\frac{\nu_{j}(p_0-r_j)}{r_i-r_j}
=\frac{\left|\begin{array}{cc}
\partial_{x}P_i(1,r_i;r_i,1) &  A(1,r_i)
\vspace{2mm}
\\
\partial_{y}P_i(1,r_i;r_i,1) &  B(1,r_i)
\end{array}\right|}{B(1,r_i)P_i(1,r_i;r_i,1)}\quad \big({\fontsize{10}{10pt}\text{for }} i=1,\ldots,n\big)
\end{align*}
\end{SMALL}
\hspace{-1mm}and, for all $i\in\{0,\ldots,n\}$ such that $\nu_i\geq2,$
\begin{align*}
(d-1)B(1,r_i)+(p_0-r_i)\partial_{y}B(1,r_i)=\partial_{x}B(1,r_i)-\partial_{y}A(1,r_i).
\end{align*}
By the definition of the polynomials $Q_i$'s, it follows that
\begin{align*}
h_0=\frac{Q_{0}(1,r_0;-r_0,1)}{\nu_{0}B(1,r_0)P_{0}(1,r_0;-r_0,1)}
\qquad\hspace{2mm}\text{and}\qquad\hspace{2mm}
\psi_i=\frac{Q_{i}(1,r_i;r_i,1)}{\nu_{i}B(1,r_i)P_{i}(1,r_i;r_i,1)}.
\end{align*}

\noindent As a consequence, $K(\Leg\preh)$ is holomorphic on $D_\ell=\{\check{y}=0\}$ if and only if
\begin{small}
\begin{align*}
\left(1+\frac{1}{\nu_0}\right)\frac{(p_0-r_0)Q_{0}(1,r_0;-r_0,1)}{B(1,r_0)P_{0}(1,r_0;-r_0,1)}
+\sum_{i=1}^{n}\left(1-\frac{1}{\nu_i}\right)\left(p_0-r_i\right)
\left(\frac{Q_{i}(1,r_i;r_i,1)}{B(1,r_i)P_{i}(1,r_i;r_i,1)}+\frac{3\nu_{i}(p_0-r_0)}{r_i-r_0}\right)=0.
\end{align*}
\end{small}
\hspace{-1mm}This ends the proof of the theorem.
\end{proof}

\begin{cor}\label{cor:holomorphie-courbure-homogene-D-ell-fibre-sans-point-critique-non-fixe}
{\sl
Let $\preh=\ell\boxtimes\mathcal{H}$ be a homogeneous pre-foliation of co-degree $1$ and degree $d\geq3$ on $\pp$, defined by the $1$-form
\begin{align*}
&\hspace{1.5cm}\omega=(\alpha\,x+\beta\,y)\left(A(x,y)\mathrm{d}x+B(x,y)\mathrm{d}y\right),\quad A,B\in\mathbb{C}[x,y]_{d-1},\hspace{2mm}\gcd(A,B)=1.
\end{align*}
Assume that the line $\ell$ is not invariant by $\mathcal{H}$ and that the fiber $\Gunderline_{\mathcal{H}}^{-1}\big(\Gunderline_{\mathcal{H}}([-\alpha:\beta])\big)$ does not contain any non-fixed critical point of $\Gunderline_{\mathcal H}.$ Then the curvature of $\mathrm{Leg}\preh$ is holomorphic on $D_{\ell}=\mathcal{G}_{\mathcal{H}}(\ell)$ if and only~if $Q(\beta,-\alpha;\alpha,\beta)=0,$ where
\begin{align}\label{equa:polynome-Q}
&
Q(x,y;\alpha,\beta):=\left| \begin{array}{cc}
\dfrac{\partial{P}}{\partial{x}} &  A(\beta,-\alpha)
\vspace{2mm}
\\
\dfrac{\partial{P}}{\partial{y}} &  B(\beta,-\alpha)
\end{array}\right|
&&\text{and}&&
P(x,y;\alpha,\beta):=\frac{\left|
\begin{array}{cc}
A(x,y)  &  A(\beta,-\alpha)
\\
B(x,y)  &  B(\beta,-\alpha)
\end{array}
\right|}{\alpha\,x+\beta\,y}.
\end{align}
}
\end{cor}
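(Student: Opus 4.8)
The plan is to deduce this corollary directly from Theorem~\ref{thm:holomorphie-courbure-homogene-D-ell} by showing that, under the two hypotheses, the general holomorphy criterion stated there collapses to a single term. As in the proof of that theorem and of Corollaire~\ref{cor:holomorphie-courbure-droite-inflex-nu-1-D-neq-D-ell}, I would first reduce, up to a linear conjugation, to the case $D_\ell=\mathcal{G}_{\mathcal{H}}(\ell)\neq L_\infty$, so that $\Gunderline_{\mathcal{H}}([-\alpha:\beta])=[p_0:1]$ with $p_0=-A(\beta,-\alpha)/B(\beta,-\alpha)\in\C$ and $B(\beta,-\alpha)\neq0$. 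Writing $\Gunderline_{\mathcal{H}}^{-1}(\Gunderline_{\mathcal{H}}([-\alpha:\beta]))=\{[-\alpha:\beta],[a_1:b_1],\ldots,[a_n:b_n]\}$ and letting $\nu_0,\nu_i$ be the ramification indices as in the theorem, I then exploit the two structural consequences of the hypotheses.

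The first key observation is that $\nu_0=1$: since $\ell$ is not invariant by $\mathcal{H}$, the point $[-\alpha:\beta]$ is not fixed by $\Gunderline_{\mathcal{H}}$, so if we had $\nu_0\geq2$ then $[-\alpha:\beta]$ would be a non-fixed critical point lying in its own fibre $\Gunderline_{\mathcal{H}}^{-1}(\Gunderline_{\mathcal{H}}([-\alpha:\beta]))$, contradicting the hypothesis. The second observation is that the whole sum $\sum_{i=1}^n$ in the criterion of Theorem~\ref{thm:holomorphie-courbure-homogene-D-ell} vanishes: for each $i$, either $\nu_i=1$, so the factor $1-\tfrac{1}{\nu_i}$ is zero, or $\nu_i\geq2$, in which case $[a_i:b_i]$ is a critical point of $\Gunderline_{\mathcal{H}}$ which, by hypothesis, must be fixed; but a fixed point lying in the fibre over $[p_0:1]$ necessarily equals $[p_0:1]$, whence $p_0\hspace{0.2mm}b_i-a_i=0$ and the corresponding term again vanishes.

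Consequently the criterion of Theorem~\ref{thm:holomorphie-courbure-homogene-D-ell} reduces to $\left(1+\tfrac{1}{\nu_0}\right)\frac{(\alpha+p_0\beta)Q_0(\beta,-\alpha;\alpha,\beta)}{B(\beta,-\alpha)P_0(\beta,-\alpha;\alpha,\beta)}=0$, and I would check that each of the three factors $1+\tfrac{1}{\nu_0}$, $\alpha+p_0\beta$ and $B(\beta,-\alpha)P_0(\beta,-\alpha;\alpha,\beta)$ is nonzero. The first equals $2$; the second vanishes only if $[p_0:1]=[-\alpha:\beta]$, i.e. only if $\ell$ is invariant, which is excluded; and for the third, the factorisation $A(1,z)+p_0B(1,z)=c\prod_{i=0}^{n}(z-r_i)^{\nu_i}$ of \cite[Lemme~3.5]{BM22arxiv} (with $r_0=-\alpha/\beta$, $r_i=a_i/b_i$) gives, after homogenising and dividing by $(\alpha x+\beta y)^{\nu_0}$, the value $P_0(\beta,-\alpha;\alpha,\beta)=c\,\beta^{-\nu_0}B(\beta,-\alpha)\prod_{i=1}^{n}(-\alpha-r_i\beta)^{\nu_i}\neq0$, since the $r_i$ are pairwise distinct and distinct from $r_0$. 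Thus holomorphy of $K(\Leg\preh)$ along $D_\ell$ is equivalent to $Q_0(\beta,-\alpha;\alpha,\beta)=0$.

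It then remains to translate this into the stated condition by substituting $\nu_0=1$ into the definition of $Q_0$: the term $(\nu_0-1)(\partial_xB-\partial_yA)P_0$ drops out, the coefficient $2\nu_0+1$ becomes $3$, and $P_0$ coincides with the polynomial $P$ of~(\ref{equa:polynome-Q}); moreover, evaluating the remaining determinant at the point $(x,y)=(\beta,-\alpha)$ turns its entries $A(x,y),B(x,y)$ into the constants $A(\beta,-\alpha),B(\beta,-\alpha)$, so that $Q_0(\beta,-\alpha;\alpha,\beta)=3\,Q(\beta,-\alpha;\alpha,\beta)$, yielding the announced equivalence. The main delicate points are the two vanishing arguments—seeing that non-invariance forces $\nu_0=1$, and that every surviving term of the sum is annihilated either by the factor $1-\tfrac1{\nu_i}$ or by $p_0\hspace{0.2mm}b_i-a_i$—together with the bookkeeping that identifies $Q_0$ evaluated at $(\beta,-\alpha)$ with the constant-entry determinant $Q$.
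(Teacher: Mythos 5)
Votre démonstration est correcte et suit exactement la voie que le papier sous-entend : le corollaire est une spécialisation directe du Théorème~\ref{thm:holomorphie-courbure-homogene-D-ell}, où la non-invariance de $\ell$ donne $\nu_0=1$ et l'absence de point critique non fixe dans la fibre annule chaque terme de la somme (soit par $1-\tfrac1{\nu_i}=0$, soit par $p_0\hspace{0.2mm}b_i-a_i=0$), le critère se réduisant alors à $Q_0(\beta,-\alpha;\alpha,\beta)=3\,Q(\beta,-\alpha;\alpha,\beta)=0$ après vérification que les facteurs $\alpha+p_0\beta$ et $B(\beta,-\alpha)P_0(\beta,-\alpha;\alpha,\beta)$ sont non nuls.
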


\begin{rem}\label{rem:Q(b,-a,a,b)}
In particular, in degree $d=3$, the curvature of $\mathrm{Leg}\preh$ is holomorphic along $D_{\ell}$ if and only if the line with equation~$A(\beta,-\alpha)x+B(\beta,-\alpha)y=0$ is invariant by $\mathcal{H}$, or equivalently, if and only if $\Gunderline_{\mathcal{H}}\big(\Gunderline_{\mathcal{H}}([-\alpha:\beta])\big)=\Gunderline_{\mathcal{H}}([-\alpha:\beta]).$

\noindent Indeed, putting $a=A(\beta,-\alpha)$, $b=B(\beta,-\alpha)$ and $P(x,y;\alpha,\beta)=f(\alpha,\beta)x+g(\alpha,\beta)y$ we obtain
\begin{align*}
Q(\beta,-\alpha\hspace{0.2mm};\alpha,\beta)=f(\alpha,\beta)b-g(\alpha,\beta)a=P(b,-a\hspace{0.2mm};\alpha,\beta)
=-\frac{bA(b,-a)-aB(b,-a)}{\beta\,a-\alpha\,b}
=-\frac{\Cinv\left(b,-a\right)}{\Cinv(\beta,-\alpha)},
\end{align*}
where $\Cinv=x\hspace{0.2mm}A+yB$\label{not:C-H} denotes the tangent cone of $\mathcal{H}$ at the origin $O,$ \emph{see}~\cite[Section~2]{BM18Bull}.
\end{rem}

\vspace{2.1cm}
\noindent Combining Corollaries~\ref{cor:platitude-homogene-convexe-ell-non-invariante-non-effectif}~and~\ref{cor:holomorphie-courbure-homogene-D-ell-fibre-sans-point-critique-non-fixe}, we obtain:
\begin{cor}\label{cor:platitude-homogene-convexe-ell-non-invariante-effectif}
{\sl
Let $\preh=\ell\boxtimes\mathcal{H}$ be a homogeneous pre-foliation of co-degree $1$ and degree $d\geq3$ on $\pp$, defined by the $1$-form
\begin{align*}
&\hspace{1.5cm}\omega=(\alpha\,x+\beta\,y)\left(A(x,y)\mathrm{d}x+B(x,y)\mathrm{d}y\right),\quad A,B\in\mathbb{C}[x,y]_{d-1},\hspace{2mm}\gcd(A,B)=1.
\end{align*}
Assume that the homogeneous foliation $\mathcal{H}$ is convex and that the line $\ell$ is not invariant by $\mathcal{H}.$ Then the $d$-web $\mathrm{Leg}\preh$ is flat if and only if $Q(\beta,-\alpha;\alpha,\beta)=0,$ where $Q$ is the polynomial given by (\ref{equa:polynome-Q}).
}
\end{cor}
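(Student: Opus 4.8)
\textit{Proof proposal.} The plan is to obtain the statement purely by chaining the two preceding corollaries, so the only substantive point is to check that the convexity of $\mathcal{H}$ secures the technical hypothesis demanded by Corollaire~\ref{cor:holomorphie-courbure-homogene-D-ell-fibre-sans-point-critique-non-fixe}.

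First I would invoke the description of the transverse inflection divisor of a homogeneous foliation recalled above (\emph{voir}~\cite[Section~2]{BM18Bull}): one has $\ItrH=\prod_i T_i^{n_i}$ with $T_i=(b_i\hspace{0.2mm}y-a_i\hspace{0.2mm}x=0)$, where the $[a_i:b_i]\in\mathbb{P}^{1}_{\mathbb{C}}$ are exactly the non-fixed critical points of $\Gunderline_{\mathcal H}$. Convexity of $\mathcal{H}$ means that the leaves which are not lines carry no inflection points, \emph{i.e.} that $\ItrH=0$; hence $\Gunderline_{\mathcal H}$ possesses no non-fixed critical point at all. In particular, whatever the direction $[-\alpha:\beta]$ of $\ell$, the fiber $\Gunderline_{\mathcal{H}}^{-1}\big(\Gunderline_{\mathcal{H}}([-\alpha:\beta])\big)$ contains no non-fixed critical point of $\Gunderline_{\mathcal H}$.

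This is precisely the hypothesis of Corollaire~\ref{cor:holomorphie-courbure-homogene-D-ell-fibre-sans-point-critique-non-fixe}, which I would then apply to conclude that $K(\Leg\preh)$ is holomorphic along $D_{\ell}=\G_{\mathcal{H}}(\ell)$ if and only if $Q(\beta,-\alpha;\alpha,\beta)=0$. On the other hand, since $\mathcal{H}$ is convex and $\ell$ is not invariant by $\mathcal{H}$, Corollaire~\ref{cor:platitude-homogene-convexe-ell-non-invariante-non-effectif} asserts that $\Leg\preh$ is flat if and only if its curvature is holomorphic along $D_{\ell}$. Composing the two equivalences yields the asserted characterization: $\Leg\preh$ is flat if and only if $Q(\beta,-\alpha;\alpha,\beta)=0$.

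The hard part is not genuinely hard here: it is simply to notice that convexity is a \emph{global} statement about $\Gunderline_{\mathcal H}$ (namely the total absence of non-fixed critical points), which is strictly stronger than the fiberwise condition required, so the hypothesis of the effective corollary is automatically in force. The whole argument therefore reduces to this observation together with the two quotations, and no further computation is needed.
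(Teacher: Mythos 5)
Your proposal is correct and coincides with the paper's own argument: the corollary is stated there precisely as the combination of Corollaire~\ref{cor:platitude-homogene-convexe-ell-non-invariante-non-effectif} and Corollaire~\ref{cor:holomorphie-courbure-homogene-D-ell-fibre-sans-point-critique-non-fixe}, and your observation that convexity (triviality of $\ItrH$, hence absence of any non-fixed critical point of $\Gunderline_{\mathcal H}$) automatically supplies the fiberwise hypothesis of the latter is exactly the point that makes the chaining legitimate.
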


\noindent Here are two examples that will be useful in Section~\S\ref{sec:pre-pre-feuill-convexe-reduit}.
\begin{eg}\label{eg:H0}
Let us consider the homogeneous foliation $\mathcal{H}_{0}^{d-1}$ defined in the affine chart $z=1$ by the~$1$-form\label{not:omega0}
$$\omega_{\hspace{0.2mm}0}^{\hspace{0.2mm}d-1}=(d-2)y^{d-1}\mathrm{d}x+x\left(x^{d-2}-(d-1)y^{d-2}\right)\mathrm{d}y.$$
We know from~\cite[Example~6.5]{BM18Bull} that $\mathcal{H}_{0}^{d-1}$ is convex, of type $1\cdot\mathrm{R}_{d-2}+(d-2)\cdot\mathrm{R}_{1}$ and with inflection divisor\label{not:Inflex-F}\label{not:Inflex-Invariant-F}
\[
\mathrm{I}_{\mathcal{H}_{0}^{d-1}}
=\mathrm{I}_{\mathcal{H}_{0}^{d-1}}^{\hspace{0.2mm}\mathrm{inv}}
=-(d-1)(d-2)x\hspace{0.2mm}z\hspace{0.2mm}y^{d-1}\big(y^{d-2}-x^{d-2}\big)^2.
\]
If $\ell$ is one of the invariant lines of $\mathcal{H}_{0}^{d-1}$, {\it i.e.} if $\ell\in\{xyz(y-\zeta^{k} x)=0,\,k=0,\ldots,d-3\},$ where $\zeta=\mathrm{exp}\left(\frac{2\mathrm{i}\pi}{d-2}\right),$ then the $d$-web $\Leg(\ell\boxtimes\mathcal{H}_{0}^{d-1})$ is flat by Corollary~\ref{cor:platitude-pre-feuilletage-homogene-convexe}.
\smallskip

\noindent If $\ell=(y-\rho x=0)$ is not invariant by $\mathcal{H}_{0}^{d-1}$, then the $d$-web $\Leg(\ell\boxtimes\mathcal{H}_{0}^{d-1})$ is flat if and only if $\rho^{d-2}=\frac{1}{2(d-2)}$, {\it i.e.} if and only if $\ell\in\left\{y-\rho_{0}\zeta^{k}\,x=0,\,k=0,\ldots,d-3\right\},$ where $\rho_{0}=\sqrt[d-2]{\frac{1}{2\,d-4}}.$ Indeed, with the notations of Corollary~\ref{cor:holomorphie-courbure-homogene-D-ell-fibre-sans-point-critique-non-fixe}, we have
\begin{Small}
\begin{align*}
&
Q(x,y;-\rho,1)=\left(1-(d-1)\rho^{d-2}\right)\frac{\partial{P}}{\partial{x}}-(d-2)\rho^{d-1}\frac{\partial{P}}{\partial{y}}
\\
&\hspace{-1.17cm}{\fontsize{11}{11pt}\text{and}}
\\
&
P(x,y;-\rho,1)=-(d-2)\left((d-1)(\rho y)^{d-2}-\frac{y^{d-1}-(\rho x)^{d-1}}{y-\rho x}\right)
=-(d-2)\left((d-1)(\rho y)^{d-2}-\sum_{i=0}^{d-2}\rho^ix^iy^{d-2-i}\right),
\end{align*}
\end{Small}
\hspace{-1mm}so that, according to Corollary~\ref{cor:platitude-homogene-convexe-ell-non-invariante-effectif}, the flatness of $\Leg(\ell\boxtimes\mathcal{H}_{0}^{d-1})$ is characterized by
\begin{Small}
\begin{align*}
0=Q(1,\rho;-\rho,1)=\frac{1}{2}(d-1)(d-2)^2\rho^{d-2}\left(\rho^{d-2}-1\right)\left((2\,d-4)\rho^{d-2}-1\right)
\Longleftrightarrow\rho^{d-2}=\frac{1}{2(d-2)}.
\end{align*}
\end{Small}
\hspace{-1.35mm}In all cases, for any line $\ell\subset\pp$ such that $O\in\ell$ or~$\ell=L_\infty,$ the $d$-web $\Leg(\ell\boxtimes\mathcal{H}_{0}^{d-1})$ is flat if and only if, up~to~linear conjugation, $\ell=L_\infty$ or $\ell\in\left\{xy(y-x)(y-\rho_{0}\,x)=0\right\}.$ Indeed, putting $\varphi(x,y)=(x,\zeta^{k}y)$, we have
\begin{align*}
\varphi^*\left(\big(y-\zeta^{k}x\big)\omega_{0}^{d-1}\right)=\zeta^{2k}\big(y-x\big)\omega_{0}^{d-1}
&&\hspace{2mm}\text{and}\hspace{2mm}&&
\varphi^*\left(\big(y-\rho_{0}\zeta^{k}\,x\big)\omega_{0}^{d-1}\right)=\zeta^{2k}\big(y-\rho_{0}\,x\big)\omega_{0}^{d-1}.
\end{align*}
\end{eg}

\begin{eg}\label{eg:H4}
For $d\geq4$, let $\mathcal{H}_{4}^{d-1}$ be the homogeneous foliation defined in the affine chart $z=1$ by the~$1$-form\label{not:omega4}
\begin{align*}
\hspace{7mm}
\omega_{\hspace{0.2mm}4}^{\hspace{0.2mm}d-1}=y(\sigma_d\,x^{d-2}-y^{d-2})\diffx+x(\sigma_d\,y^{d-2}-x^{d-2})\diffy,
\quad\quad\text{where}\hspace{1mm}\sigma_d=1+\tfrac{2}{d-3}.
\end{align*}
This foliation is convex of type $(d-2)\cdot\mathrm{R}_2$; indeed, a straightforward computation shows that
\[
\mathrm{I}_{\mathcal{H}_{4}^{d-1}}
=\mathrm{I}_{\mathcal{H}_{4}^{d-1}}^{\hspace{0.2mm}\mathrm{inv}}
=\sigma_d(\sigma_d-1)xyz\big(x^{d-2}+y^{d-2}\big)^3.
\]
\vspace{1mm}

\noindent Let $\ell$ be a line of $\pp$ such that $O\in\ell$ or $\ell=L_\infty.$ If $\ell$ is invariant by $\mathcal{H}_{4}^{d-1},$ then Corollary~\ref{cor:platitude-pre-feuilletage-homogene-convexe} ensures that~$\Leg(\ell\boxtimes\mathcal{H}_{4}^{d-1})$ is flat, and we have $\ell\in\{xyz(y-\xi^{2k+1}\,x)=0,\,k=0,\ldots,d-3\},$ where $\xi=\mathrm{exp}\left(\frac{\mathrm{i}\pi}{d-2}\right).$

\noindent If $\ell$ is not invariant by $\mathcal{H}_{4}^{d-1}$, then $\ell=\{y-\rho\,x=0\}$ with $\rho(\rho^{d-2}+1)\neq0$; by applying Corollary~\ref{cor:platitude-homogene-convexe-ell-non-invariante-effectif}, we~obtain that the $d$-web~$\Leg(\ell\boxtimes\mathcal{H}_{4}^{d-1})$ is flat if and only if
\[
0=Q(1,\rho;-\rho,1)=-\sigma_{d}(d-2)(\rho^{d-2}+1)^2(\rho^{d-2}-1),
\]
hence if and only if $\rho^{d-2}=1,$ which is equivalent to $\ell\in\{y-\xi^{2k}x=0,\,k=0,\ldots,d-3\}.$
\smallskip

\noindent Note that, in all cases, the $d$-web $\Leg(\ell\boxtimes\mathcal{H}_{4}^{d-1})$ is flat if and only if, up~to~linear conjugation, $\ell=L_\infty$ or~$\ell\in\left\{x(y-x)(y-\xi\,x)=0\right\}.$ Indeed, putting $\varphi(x,y)=(y,x)$ and $\psi(x,y)=(x,\xi^{2k}y),$ we have
\begin{small}
\begin{align*}
\varphi^*(y\omega_{4}^{d-1})=x\omega_{4}^{d-1},&&\hspace{3mm}
\psi^*\left(\big(y-\xi^{2k}x\big)\omega_{4}^{d-1}\right)=\xi^{4k}\big(y-x\big)\omega_{4}^{d-1},&&\hspace{3mm}
\psi^*\left(\big(y-\xi^{2k+1}\,x\big)\omega_{4}^{d-1}\right)=\xi^{4k}\big(y-\xi\,x\big)\omega_{4}^{d-1}.
\end{align*}
\end{small}
\end{eg}

\begin{cor}\label{cor:holomorphie-ell-droite-inflexion-ordre-nu-1}
{\sl Let $d\geq3$ be an integer and let $\mathcal{H}$ be a homogeneous foliation of degree $d-1$ on $\pp$ defined by the $1$-form
\begin{align*}
&\hspace{1.5cm}\omega=A(x,y)\mathrm{d}x+B(x,y)\mathrm{d}y,\quad A,B\in\mathbb{C}[x,y]_{d-1},\hspace{2mm}\gcd(A,B)=1.
\end{align*}
Assume that $\mathcal{H}$ admits a transverse inflection line $\ell=(\alpha\,x+\beta\,y=0)$ of order $\nu-1$. Assume moreover that $[-\alpha:\beta]\in\mathbb{P}^{1}_{\mathbb{C}}$ is the only non-fixed critical point of $\Gunderline_{\mathcal H}$ in its fiber $\Gunderline_{\mathcal H}^{-1}(\Gunderline_{\mathcal H}([-\alpha:\beta])).$ Put $\preh:=\ell\boxtimes\mathcal{H}.$ Then the curvature of $\mathrm{Leg}\preh$ is holomorphic along $D_\ell$ if and only if $Q(\beta,-\alpha;\alpha,\beta)=0,$ where
\begin{Small}
\begin{align*}
Q(x,y;\alpha,\beta):=(\nu-1)\left(\dfrac{\partial{B}}{\partial{x}}-\dfrac{\partial{A}}{\partial{y}}\right)P(x,y;\alpha,\beta)+(2\nu+1)
\left|\begin{array}{cc}
\dfrac{\partial{P}}{\partial{x}} &  A(x,y)
\vspace{2mm}
\\
\dfrac{\partial{P}}{\partial{y}} &  B(x,y)
\end{array} \right|
\quad{\fontsize{11}{11pt}\text{and}}\quad
P(x,y;\alpha,\beta):=\frac{\left|
\begin{array}{cc}
A(x,y)  &  A(\beta,-\alpha)
\\
B(x,y)  &  B(\beta,-\alpha)
\end{array}
\right|}{(\alpha\,x+\beta\,y)^{\nu}}.
\end{align*}
\end{Small}
}
\end{cor}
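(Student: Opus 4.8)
The plan is to derive this corollary as a direct specialization of Theorem~\ref{thm:holomorphie-courbure-homogene-D-ell}. Since $\ell$ is a \emph{transverse} inflection line of $\mathcal{H}$, it is not invariant by $\mathcal{H}$, so that $D_\ell=\G_{\mathcal{H}}(\ell)$; moreover the hypothesis that $\ell=(\alpha\,x+\beta\,y=0)$ is transverse of order $\nu-1$ means precisely that $[-\alpha:\beta]$ is a non-fixed critical point of $\Gunderline_{\mathcal{H}}$ of ramification index $\nu_0=\nu$. After a preliminary linear conjugation --- harmless, exactly as in the proof of Corollary~\ref{cor:holomorphie-courbure-droite-inflex-nu-1-D-neq-D-ell}, since it preserves both the holomorphy of $K(\Leg\preh)$ along $D_\ell$ and the vanishing of $Q(\beta,-\alpha;\alpha,\beta)$ --- I would assume $\Gunderline_{\mathcal{H}}([-\alpha:\beta])=[p_0:1]$ with $p_0\in\C$, so that Theorem~\ref{thm:holomorphie-courbure-homogene-D-ell} applies verbatim.

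That theorem writes the holomorphy of $K(\Leg\preh)$ along $D_\ell$ as the vanishing of a term attached to $[-\alpha:\beta]$, carrying the factor $\bigl(1+\tfrac{1}{\nu_0}\bigr)Q_0(\beta,-\alpha;\alpha,\beta)$, plus a sum $\sum_{i=1}^{n}$ over the remaining points $[a_i:b_i]$ of the fiber $\Gunderline_{\mathcal{H}}^{-1}([p_0:1])$. The crux is to show this sum vanishes, and this is exactly where the hypothesis is used. Each $[a_i:b_i]$ with $i\geq1$ is either a regular point of $\Gunderline_{\mathcal{H}}$, in which case $\nu_i=1$ and the factor $1-\tfrac{1}{\nu_i}$ annihilates the $i$-th summand; or it is a critical point, which by assumption must then be \emph{fixed}. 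But a fixed point lying in the fiber over $[p_0:1]$ is forced to equal $[p_0:1]$, so that $[a_i:b_i]=[p_0:1]$ and hence $p_0\,b_i-a_i=0$, and the factor $(p_0\,b_i-a_i)$ again annihilates the summand. In either case the term is zero, whence $\sum_{i=1}^{n}=0$.

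It then remains to simplify the surviving term, so that the criterion reads
\[
\left(1+\tfrac{1}{\nu_0}\right)\frac{(\alpha+p_0\,\beta)\,Q_0(\beta,-\alpha;\alpha,\beta)}{B(\beta,-\alpha)\,P_0(\beta,-\alpha;\alpha,\beta)}=0.
\]
Here $1+\tfrac{1}{\nu_0}\neq0$; the denominators $B(\beta,-\alpha)$ and $P_0(\beta,-\alpha;\alpha,\beta)$ are the nonzero quantities already present in Theorem~\ref{thm:holomorphie-courbure-homogene-D-ell}; and $\alpha+p_0\,\beta\neq0$, because $[-\alpha:\beta]$ being non-fixed gives $[p_0:1]=\Gunderline_{\mathcal{H}}([-\alpha:\beta])\neq[-\alpha:\beta]$, the equality $[p_0:1]=[-\alpha:\beta]$ being precisely $\alpha+p_0\,\beta=0$. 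The condition is therefore equivalent to $Q_0(\beta,-\alpha;\alpha,\beta)=0$, and since $\nu_0=\nu$ the polynomials $P_0,Q_0$ of Theorem~\ref{thm:holomorphie-courbure-homogene-D-ell} are exactly the $P,Q$ of the present statement.

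I expect the only genuinely delicate point to be the vanishing of $\sum_{i=1}^{n}$: namely the dichotomy ``regular versus fixed-critical'' for the other points of the fiber, together with the observation that a fixed point of $\Gunderline_{\mathcal{H}}$ lying in the fiber over $[p_0:1]$ can only be $[p_0:1]$ itself. The remaining verifications --- that the preliminary conjugation affects neither side of the asserted equivalence, and that the surviving prefactor is nonzero --- are routine.
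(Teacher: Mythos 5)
Your proposal is correct and is exactly the intended derivation: the paper states this corollary without proof as an immediate specialization of Théorème~\ref{thm:holomorphie-courbure-homogene-D-ell}, and your argument — killing each term of the sum $\sum_{i=1}^{n}$ either by $1-\tfrac{1}{\nu_i}=0$ (regular point) or by $p_0\,b_i-a_i=0$ (fixed critical point), then using $\alpha+p_0\,\beta\neq0$ from the non-fixedness of $[-\alpha:\beta]$ to reduce to $Q_0(\beta,-\alpha;\alpha,\beta)=0$ with $\nu_0=\nu$ — is precisely the computation the paper leaves implicit. The preliminary conjugation to ensure $\Gunderline_{\mathcal H}([-\alpha:\beta])\neq\infty$ is handled the same way as in the paper's proof of Corollaire~\ref{cor:holomorphie-courbure-droite-inflex-nu-1-D-neq-D-ell}.
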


\begin{cor}\label{cor:holomorphie-ell-droite-inflexion-maximal}
{\sl Let $d\geq3$ be an integer and let $\mathcal{H}$ be a homogeneous foliation of degree $d-1$ on $\pp$ defined by the $1$-form
\begin{align*}
&\hspace{1.5cm}\omega=A(x,y)\mathrm{d}x+B(x,y)\mathrm{d}y,\quad A,B\in\mathbb{C}[x,y]_{d-1},\hspace{2mm}\gcd(A,B)=1.
\end{align*}
Assume that $\mathcal{H}$ has a transverse inflection line $\ell=(\alpha\,x+\beta\,y=0)$ of maximal order $d-2$. Put $\preh:=\ell\boxtimes\mathcal{H}.$ Then the curvature of $\mathrm{Leg}\preh$ is holomorphic along $D_\ell$ if and only if the $2$-form $\mathrm{d}\omega$ vanishes on the line $\ell.$
}
\end{cor}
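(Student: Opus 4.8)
The plan is to recognize the statement as the extremal case $\nu = d-1$ of Corollary~\ref{cor:holomorphie-ell-droite-inflexion-ordre-nu-1} and to show that, in this case, the effective criterion $Q(\beta,-\alpha;\alpha,\beta)=0$ collapses to the vanishing of $\mathrm{d}\omega$ on $\ell$. First I would check that the hypotheses of Corollary~\ref{cor:holomorphie-ell-droite-inflexion-ordre-nu-1} are automatically satisfied. Since $\gcd(A,B)=1$, the map $\Gunderline_{\mathcal{H}}\colon\mathbb{P}^{1}_{\mathbb{C}}\to\mathbb{P}^{1}_{\mathbb{C}}$, $\Gunderline_{\mathcal{H}}([y:x])=[-A(x,y):B(x,y)]$, has degree $d-1$; that $\ell$ is a transverse inflection line of maximal order $d-2$ means precisely that $[-\alpha:\beta]$ is a non-fixed critical point of $\Gunderline_{\mathcal{H}}$ of ramification index $\nu=d-1$. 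As this index equals $\deg\Gunderline_{\mathcal{H}}$, the fibre $\Gunderline_{\mathcal{H}}^{-1}(\Gunderline_{\mathcal{H}}([-\alpha:\beta]))$ is totally ramified, reduced to the single point $[-\alpha:\beta]$; in particular this point is trivially the only non-fixed critical point of $\Gunderline_{\mathcal{H}}$ in its fibre, so Corollary~\ref{cor:holomorphie-ell-droite-inflexion-ordre-nu-1} applies with $\nu=d-1$, and it suffices to simplify $Q(\beta,-\alpha;\alpha,\beta)$.

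The crux of the argument is to show that in this extremal situation the polynomial $P(x,y;\alpha,\beta)$ is a nonzero constant. I would argue that the numerator $N(x,y):=A(x,y)B(\beta,-\alpha)-A(\beta,-\alpha)B(x,y)$, homogeneous of degree $d-1$, vanishes at a point $[x:y]$ exactly when $[-A(x,y):B(x,y)]=[-A(\beta,-\alpha):B(\beta,-\alpha)]$, i.e.\ when $\Gunderline_{\mathcal{H}}([y:x])=\Gunderline_{\mathcal{H}}([-\alpha:\beta])$. Thus the divisor of $N$ on $\mathbb{P}^{1}_{\mathbb{C}}$ is exactly the fibre above $\Gunderline_{\mathcal{H}}([-\alpha:\beta])$, which by the previous step is the single point $[x:y]=[\beta:-\alpha]$ with multiplicity $d-1$, corresponding to the linear form $\alpha x+\beta y$. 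Since $\gcd(A,B)=1$ forces $(A(\beta,-\alpha),B(\beta,-\alpha))\neq(0,0)$ and hence $N\not\equiv0$, this yields $N=c\,(\alpha x+\beta y)^{d-1}$ for some $c\in\C^{*}$. Consequently $P=N/(\alpha x+\beta y)^{\nu}=c$ is constant, so $\partial_{x}P=\partial_{y}P=0$.

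It then remains to feed this into the formula for $Q$. The Jacobian determinant in the definition of $Q$ vanishes identically because its first column is zero, so $Q(x,y;\alpha,\beta)=(\nu-1)\,c\,(\partial_{x}B-\partial_{y}A)(x,y)=(d-2)\,c\,(\partial_{x}B-\partial_{y}A)(x,y)$. Evaluating at $(\beta,-\alpha)$ and using $c\neq0$ together with $d\geq3$, the criterion $Q(\beta,-\alpha;\alpha,\beta)=0$ of Corollary~\ref{cor:holomorphie-ell-droite-inflexion-ordre-nu-1} is equivalent to $(\partial_{x}B-\partial_{y}A)(\beta,-\alpha)=0$. Finally I would identify $\mathrm{d}\omega=(\partial_{x}B-\partial_{y}A)\,\mathrm{d}x\wedge\mathrm{d}y$ and observe that, its coefficient being homogeneous of degree $d-2$, the restriction of $\mathrm{d}\omega$ to the line $\ell$, parametrized by $(x,y)=t(\beta,-\alpha)$, equals $t^{d-2}(\partial_{x}B-\partial_{y}A)(\beta,-\alpha)\,\mathrm{d}x\wedge\mathrm{d}y$; hence $\mathrm{d}\omega$ vanishes on $\ell$ if and only if $(\partial_{x}B-\partial_{y}A)(\beta,-\alpha)=0$, which closes the chain of equivalences with the holomorphy of the curvature of $\Leg\preh$ along $D_{\ell}$.

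The main obstacle is the middle step: proving that $P$ degenerates to a constant. This rests on correctly reading the numerator of $P$ as the pull-back of the fibre of $\Gunderline_{\mathcal{H}}$ and on the numerical fact that maximal transverse inflection order forces total ramification of $\Gunderline_{\mathcal{H}}$ along $\ell$. Everything after that is a direct substitution into the expression for $Q$ followed by an elementary homogeneity argument; no further analysis of the curvature itself is needed, since that work is already encapsulated in Corollary~\ref{cor:holomorphie-ell-droite-inflexion-ordre-nu-1}.
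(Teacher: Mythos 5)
Your proof is correct and supplies exactly the derivation the paper leaves implicit: specialize Corollaire~\ref{cor:holomorphie-ell-droite-inflexion-ordre-nu-1} to $\nu=d-1$, observe that total ramification of $\Gunderline_{\mathcal H}$ at $[-\alpha:\beta]$ makes the fibre a single point and forces $P$ to be a nonzero constant, and reduce $Q(\beta,-\alpha;\alpha,\beta)=0$ to $(\partial_xB-\partial_yA)(\beta,-\alpha)=0$, i.e.\ to the vanishing of $\mathrm{d}\omega$ on $\ell$. This matches the intended argument; no gaps.
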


\begin{rem}
When $d\geq4$ the condition \og $\mathrm{d}\omega$ vanishes on the line $\ell$\fg\, also expresses the holomorphy of the curvature of $\mathrm{Leg}\mathcal{H}$ along $D_\ell$, thanks to~\cite[Theorem~3.8]{BM18Bull}. Thus Corollary~\ref{cor:holomorphie-ell-droite-inflexion-maximal} establishes the equivalence between the holomorphy on $D_\ell$ of $K(\mathrm{Leg}\preh)$ and that of $K(\mathrm{Leg}\mathcal{H}).$
\end{rem}

%==================================================================================================================================================================
\vspace{3cm}

\section{Flatness and homogeneous pre-foliations $\ell\boxtimes\mathcal{H}$ of co-degree $1$ such that $\deg\mathcal{T}_{\mathcal{H}}=2$}
\label{sec:application-homogene-deg-type-2}
\bigskip

\noindent In this section we propose to classify, up to automorphism of $\pp$, all homogeneous pre-foliations $\preh=\ell\boxtimes\mathcal{H}$ of co-degree $1$ and degree $d\geq3$ on $\pp$ such that $\deg\mathcal{T}_{\mathcal{H}}=2$ and the $d$-web $\Leg\preh$ is flat. The equality $\deg\mathcal{T}_{\mathcal{H}}=2$ holds if and only if the type $\mathcal{T}_{\mathcal{H}}$ of $\mathcal{H}$ is of one of the following three forms: $2\cdot\mathrm{R}_{d-2}$, $2\cdot\mathrm{T}_{d-2}$, $1\cdot\mathrm{R}_{d-2}+1\cdot\mathrm{T}_{d-2}.$ According to~\cite[Proposition~4.1]{BM18Bull}, every homogeneous foliation of type $2\cdot\mathrm{R}_{d-2}$ is linearly conjugate to the convex foliation $\mathcal{H}_{1}^{d-1}$ defined by the $1$-form\label{not:omega1}
$$\omega_{1}^{d-1}=y^{d-1}\mathrm{d}x-x^{d-1}\mathrm{d}y.$$ The homogeneous foliations of type $2\cdot\mathrm{T}_{d-2}$, resp. $1\cdot\mathrm{R}_{d-2}+1\cdot\mathrm{T}_{d-2},$ are given, up to linear conjugation, by\label{not:omega2}\label{not:omega3}
\begin{align*}
&\omega_{2}^{d-1}(\lambda,\mu)=(x^{d-1}+\lambda\,y^{d-1})\mathrm{d}x+(\mu\,x^{d-1}-y^{d-1})\mathrm{d}y,
\quad\text{where}\hspace{1mm} \lambda,\mu\in\C, \text{with}\hspace{1mm}\lambda\mu\neq-1,
\\
\text{resp.}\hspace{1mm}&\omega_{3}^{d-1}(\lambda)=(x^{d-1}+\lambda y^{d-1})\mathrm{d}x+x^{d-1}\mathrm{d}y,
\quad\text{where}\hspace{1mm}\lambda\in\C^*,
\end{align*}
\emph{cf.}~proof~of~\cite[Proposition~4.1]{BM18Bull}. We will denote by $\mathcal{H}_{2}^{d-1}(\lambda,\mu)$, resp. $\mathcal{H}_{3}^{d-1}(\lambda)$, the foliation defined by $\omega_{2}^{d-1}(\lambda,\mu)$, resp.~$\omega_{3}^{d-1}(\lambda).$

\noindent In the following three lemmas, $\ell$ denotes a line of $\pp$ such that $O\in\ell$ or~$\ell=L_\infty.$
\begin{lem}\label{lem:platitude-Leg-ell-H1}
{\sl The $d$-web $\Leg(\ell\boxtimes\mathcal{H}_{1}^{d-1})$ is flat if and only if, up to linear conjugation, $\ell=L_\infty$ or $\ell\in\{x(y-x)(y-\xi\,x)=0\},$ where $\xi=\mathrm{exp}\left(\frac{\mathrm{i}\pi}{d-2}\right)$.}
\end{lem}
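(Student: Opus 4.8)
The plan is to split the analysis according to whether $\ell=L_\infty$, $\ell$ is invariant by $\mathcal{H}_{1}^{d-1}$, or $\ell$ passes through $O$ without being invariant, and then to reduce the resulting lists of lines modulo the linear automorphisms that preserve $\mathcal{H}_{1}^{d-1}$.

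First I would record that $\mathcal{H}_{1}^{d-1}$ is convex (type $2\cdot\mathrm{R}_{d-2}$) and determine its invariant lines through $O$. Writing $\omega_{1}^{d-1}=A\diffx+B\diffy$ with $A=y^{d-1}$ and $B=-x^{d-1}$, the tangent cone at the origin is $xA+yB=xy\bigl(y^{d-2}-x^{d-2}\bigr)$, so the invariant lines through $O$ are $\{x=0\}$, $\{y=0\}$ and $\{y=\zeta^{k}x\}$, $k=0,\dots,d-3$, where $\zeta=\mathrm{exp}\bigl(\tfrac{2\mathrm{i}\pi}{d-2}\bigr)$. For each such $\ell$, as well as for $\ell=L_\infty$ (invariant by every homogeneous foliation), the pre-foliation $\ell\boxtimes\mathcal{H}_{1}^{d-1}$ is convex, so Corollaire~\ref{cor:platitude-pre-feuilletage-homogene-convexe} gives at once that $\Leg(\ell\boxtimes\mathcal{H}_{1}^{d-1})$ is flat.

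The heart of the argument is the non-invariant case. Here $\ell=\{y-\rho x=0\}$ with $\rho(\rho^{d-2}-1)\neq0$; since $\mathcal{H}_{1}^{d-1}$ is convex and $\ell$ is not invariant, Corollaire~\ref{cor:platitude-homogene-convexe-ell-non-invariante-effectif} reduces flatness to the single scalar condition $Q(1,\rho;-\rho,1)=0$, where $Q$ and $P$ are the polynomials of~(\ref{equa:polynome-Q}). I would compute $P(x,y;-\rho,1)=-\sum_{i=0}^{d-2}\rho^{i}x^{i}y^{d-2-i}$ (the quotient of $\rho^{d-1}x^{d-1}-y^{d-1}$ by $y-\rho x$), differentiate it, and evaluate at $(x,y)=(1,\rho)$; the expected outcome is $Q(1,\rho;-\rho,1)=\tfrac{(d-1)(d-2)}{2}\,\rho^{d-2}\bigl(1+\rho^{d-2}\bigr)$. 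Since $\rho\neq0$, this vanishes exactly when $\rho^{d-2}=-1$, i.e. when $\ell\in\{y-\xi^{2k+1}x=0,\ k=0,\dots,d-3\}$ with $\xi=\mathrm{exp}\bigl(\tfrac{\mathrm{i}\pi}{d-2}\bigr)$. This elementary but bookkeeping-heavy evaluation of $Q$ is the step I expect to cost the most effort, since everything else is an application of the criteria already established.

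Finally I would reduce the three lists modulo the automorphisms fixing $\mathcal{H}_{1}^{d-1}$. The maps $(x,y)\mapsto(x,\zeta^{k}y)$ satisfy $\varphi^{*}\omega_{1}^{d-1}=\zeta^{k}\omega_{1}^{d-1}$ (using $\zeta^{d-2}=1$) and the swap $(x,y)\mapsto(y,x)$ sends $\omega_{1}^{d-1}$ to $-\omega_{1}^{d-1}$; computing their action on $y-\rho x$ shows that $\{x=0\}$ and $\{y=0\}$ lie in one orbit, the lines $\{y=\zeta^{k}x\}$ in a second, and the flat non-invariant lines $\{y=\xi^{2k+1}x\}$ in a third. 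Choosing the representatives $\{x=0\}$, $\{y-x=0\}$ and $\{y-\xi x=0\}$, together with $L_\infty$, yields exactly the stated classification: $\Leg(\ell\boxtimes\mathcal{H}_{1}^{d-1})$ is flat if and only if, up to linear conjugation, $\ell=L_\infty$ or $\ell\in\{x(y-x)(y-\xi x)=0\}$.
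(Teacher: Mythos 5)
Your proposal is correct and follows essentially the same route as the paper: the invariant case is handled by Corollaire~\ref{cor:platitude-pre-feuilletage-homogene-convexe}, the non-invariant case by Corollaire~\ref{cor:platitude-homogene-convexe-ell-non-invariante-effectif} with the same polynomials $P$ and $Q$ and the same value $Q(1,\rho;-\rho,1)=\tfrac{1}{2}(d-1)(d-2)\rho^{d-2}(\rho^{d-2}+1)$, and the final reduction uses the same automorphisms $(x,y)\mapsto(y,x)$ and $(x,y)\mapsto(x,\xi^{2k}y)$. The only cosmetic difference is that you identify the invariant lines via the tangent cone rather than via the inflection divisor.
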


\begin{proof}[\sl Proof]
Note first of all that the foliation $\mathcal{H}_{1}^{d-1}$ has inflection divisor $$\mathrm{I}_{\mathcal{H}_{1}^{d-1}}=\mathrm{I}_{\mathcal{H}_{1}^{d-1}}^{\hspace{0.2mm}\mathrm{inv}}=(d-1)z\,x^{d-1}y^{d-1}\big(y^{d-2}-x^{d-2}\big).$$

\textbf{\textit{i.}} If $\ell$ is invariant by $\mathcal{H}_{1}^{d-1}$, then $\ell\in\{xyz(y-\xi^{2k} x)=0,\,k=0,\ldots,d-3\}$ and the $d$-web $\Leg(\ell\boxtimes\mathcal{H}_{1}^{d-1})$ is flat (Corollary~\ref{cor:platitude-pre-feuilletage-homogene-convexe}).
\smallskip

\textbf{\textit{ii.}} Assume that $\ell$ is not invariant by $\mathcal{H}_{1}^{d-1}$; then $\ell=(y-\rho x=0)$ with $\rho(\rho^{d-2}-1)\neq0.$ According to~Corollary~\ref{cor:platitude-homogene-convexe-ell-non-invariante-effectif}, the $d$-web $\Leg(\ell\boxtimes\mathcal{H}_{1}^{d-1})$ is flat if and only if $Q(1,\rho;-\rho,1)=0,$ where
\begin{align*}
Q(x,y;-\rho,1)=-\frac{\partial{P}}{\partial{x}}-\rho^{d-1}\frac{\partial{P}}{\partial{y}}
\qquad\text{and}\qquad
P(x,y;-\rho,1)=-\frac{y^{d-1}-(\rho x)^{d-1}}{y-\rho x}=-\sum_{i=0}^{d-2}\rho^i x^i y^{d-2-i}.
\end{align*}
Thus $Q(1,\rho;-\rho,1)=\frac{1}{2}(d-1)(d-2)\rho^{d-2}(\rho^{d-2}+1)$, and the flatness of $\Leg(\ell\boxtimes\mathcal{H}_{1}^{d-1})$ is equivalent to~$\rho^{d-2}=-1$ and therefore to $\ell\in\{y-\xi^{2k+1} x=0,\,k=0,\ldots,d-3\}.$
\smallskip

\noindent In the two cases considered, $\Leg(\ell\boxtimes\mathcal{H}_{1}^{d-1})$ is flat if and only if, up to conjugation, $\ell=L_\infty:=(z=0)$ or~$\ell\in\{x(y-x)(y-\xi\,x)=0\}.$ Indeed, putting $\varphi_1(x,y)=(y,x)$ and $\varphi_2(x,y)=(x,\xi^{2k}y)$, we have
\begin{small}
\begin{align*}
\varphi_1^*(y\omega_{1}^{d-1})=-x\omega_{1}^{d-1},&&\hspace{1mm}
\varphi_2^*\left((y-\xi^{2k} x)\omega_{1}^{d-1}\right)=\xi^{4k}(y-x)\omega_{1}^{d-1},&&\hspace{1mm}
\varphi_2^*\left((y-\xi^{2k+1} x)\omega_{1}^{d-1}\right)=\xi^{4k}(y-\xi\,x)\omega_{1}^{d-1}.
\end{align*}
\end{small}
\end{proof}
\vspace{-2.6mm}

\begin{lem}\label{lem:platitude-Leg-ell-H2}
{\sl The $d$-web $\Leg\big(\ell\boxtimes\mathcal{H}_{2}^{d-1}(\lambda,\mu)\big)$ is flat if and only if, up to linear conjugation, one of the following cases occurs:
\begin{itemize}
\item [(i)]   $\ell=L_\infty$ and $d=3$;

\item [(ii)]  $\ell=L_\infty,$ $d\geq4$ and $\lambda=\mu=0$;

\item [(iii)] $\ell=(x=0)$ and $\lambda=\mu=0$;

\item [(iv)]  $\ell=(y-x=0),$ $d\geq4$ and $(\lambda,\mu)=\left(\frac{3}{d},-\frac{3}{d}\right)$;

\item [(v)]   $\ell=(y-\xi'\,x=0),$ $d\geq4$ and $(\lambda,\mu)=\left(\frac{3\xi'}{d},-\frac{3}{d\xi'}\right)$, where $\xi'=\mathrm{exp}\left(\frac{\mathrm{i}\pi}{d}\right)$.
\end{itemize}
}
\end{lem}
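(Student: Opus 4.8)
The plan is to apply Théorème~\ref{thm:holomorphie-courbure-G(I^tr)-D-ell} after locating the components of the discriminant. With $A=x^{d-1}+\lambda y^{d-1}$ and $B=\mu x^{d-1}-y^{d-1}$ one has
\[
\Gunderline_{\mathcal{H}}(z)=-\frac{A(1,z)}{B(1,z)}=-\frac{1+\lambda z^{d-1}}{\mu-z^{d-1}},\qquad
\Gunderline_{\mathcal{H}}'(z)=-\frac{(d-1)(1+\lambda\mu)z^{d-2}}{(\mu-z^{d-1})^{2}},
\]
and since $\lambda\mu\neq-1$ the only critical points of $\Gunderline_{\mathcal{H}}$ are $z=0$ and $z=\infty$, both non-fixed of multiplicity $d-2$; they are the two transverse inflection lines $T_{1}=(y=0)$ and $T_{2}=(x=0)$ of maximal order $d-2$, in agreement with $\mathcal{T}_{\mathcal{H}}=2\cdot\mathrm{T}_{d-2}$. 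Hence $\radH=\emptyset$ and $\Delta(\Leg\mathcal{H})=T_{1}'\cup T_{2}'\cup\check{O}$ with $T_{i}'=\G_{\mathcal{H}}(T_{i})$, so by Théorème~\ref{thm:holomorphie-courbure-G(I^tr)-D-ell} flatness of $\Leg\preh$ is equivalent to the holomorphy of $K(\Leg\preh)$ along $T_{1}'$, $T_{2}'$, and, when $\ell$ is not $\mathcal{H}$-invariant, along $D_{\ell}$. For $\ell=L_{\infty}$ I reduce to $\Leg\mathcal{H}$: when $d=3$ Corollaire~\ref{cor:Leg-L-infini-H2-plat} yields flatness (case~(i)); when $d\geq4$, Théorème~\ref{thm:K-Leg-H-egale-K-Leg-L-infini-H} together with \cite[Théorème~3.8]{BM18Bull} shows that holomorphy of $K(\Leg\mathcal{H})$ on $T_{i}'$ means $\mathrm{d}\omega=(\partial_{x}B-\partial_{y}A)\diffx\wedge\diffy$ vanishes on $T_{i}$, giving $\mu=0$ on $T_{1}$ and $\lambda=0$ on $T_{2}$, i.e. case~(ii).

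Assume now $O\in\ell$ with $\ell$ neither $T_{1}$ nor $T_{2}$, say $\ell=(y-\rho x=0)$, $\rho\neq0$, so $(\alpha,\beta)=(-\rho,1)$. Applying Corollaire~\ref{cor:holomorphie-courbure-droite-inflex-maximal-d-2-T-neq-ell} to $T_{1}$ and to $T_{2}$ gives the holomorphy conditions $d\alpha\mu=3\beta$ and $d\beta\lambda=-3\alpha$, that is
\[
\lambda=\frac{3\rho}{d},\qquad \mu=-\frac{3}{d\rho}.
\]
For $d=3$ these force $\lambda\mu=-1$, which is excluded; so no such $\ell$ occurs and in degree $3$ the only through-origin flat case will be the one with $\ell$ an inflection line (treated below). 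For $d\geq4$, substituting into the tangent cone gives $\Cinv(1,\rho)=(1-\tfrac{3}{d})(1-\rho^{d})$, so $\ell$ is $\mathcal{H}$-invariant iff $\rho^{d}=1$. In that case Théorème~\ref{thm:holomorphie-courbure-G(I^tr)-D-ell} already gives flatness, and a diagonal scaling normalizes $\rho$ to $1$, whence $(\lambda,\mu)=(\tfrac{3}{d},-\tfrac{3}{d})$: this is case~(iv).

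If instead $\rho^{d}\neq1$, then $\ell$ is not $\mathcal{H}$-invariant. Under the above constraints one checks that $D_{\ell}=T_{i}'$ would force $d=3$; thus for $d\geq4$ the fibre $\Gunderline_{\mathcal{H}}^{-1}(\Gunderline_{\mathcal{H}}([-\alpha:\beta]))$ contains no non-fixed critical point and Corollaire~\ref{cor:holomorphie-courbure-homogene-D-ell-fibre-sans-point-critique-non-fixe} applies. The determinant computation there yields
\[
P(x,y;\alpha,\beta)=(1+\lambda\mu)\,\frac{y^{d-1}-\rho^{d-1}x^{d-1}}{y-\rho x},\qquad
Q(1,\rho;\alpha,\beta)=-(1+\lambda\mu)\,\frac{(d-1)(d-2)(d+3)}{2d}\,\rho^{d-3}\,(1+\rho^{d}),
\]
so holomorphy along $D_{\ell}$ is equivalent to $\rho^{d}=-1$. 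A diagonal scaling by a $d$-th root of unity then normalizes $\rho$ to $\xi'=\exp(\tfrac{\mathrm{i}\pi}{d})$ and $(\lambda,\mu)$ to $(\tfrac{3\xi'}{d},-\tfrac{3}{d\xi'})$: this is case~(v).

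Finally, if $\ell$ is itself an inflection line, the involution $(x,y)\mapsto(y,x)$ (which sends $\mathcal{H}_{2}^{d-1}(\lambda,\mu)$ to $\mathcal{H}_{2}^{d-1}(-\mu,-\lambda)$ and exchanges $T_{1},T_{2}$) lets me take $\ell=T_{2}=(x=0)$. Holomorphy on $T_{1}'$ via Corollaire~\ref{cor:holomorphie-courbure-droite-inflex-maximal-d-2-T-neq-ell} gives $\mu=0$, while holomorphy on $D_{\ell}=T_{2}'$ via Corollaire~\ref{cor:holomorphie-ell-droite-inflexion-maximal} is the vanishing of $\mathrm{d}\omega$ on $\ell$, i.e. $\lambda=0$; hence $\lambda=\mu=0$, which is case~(iii), valid for every $d\geq3$. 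Assembling the five situations proves the lemma. I expect the main obstacle to be the last identity: one must run the determinant computation of Corollaire~\ref{cor:holomorphie-courbure-homogene-D-ell-fibre-sans-point-critique-non-fixe} with the constrained $(\lambda,\mu)$ and check that $Q(1,\rho;\alpha,\beta)$ collapses to the single factor $1+\rho^{d}$, which is exactly what isolates case~(v) from the invariant family of case~(iv).
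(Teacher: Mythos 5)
Your proposal is correct and follows essentially the same route as the paper: the same reduction via Théorème~\ref{thm:holomorphie-courbure-G(I^tr)-D-ell}, the same effective criteria (Corollaires~\ref{cor:holomorphie-courbure-droite-inflex-maximal-d-2-T-neq-ell}, \ref{cor:holomorphie-courbure-homogene-D-ell-fibre-sans-point-critique-non-fixe} et \ref{cor:holomorphie-ell-droite-inflexion-maximal}, plus le Th\'eor\`eme~\ref{thm:K-Leg-H-egale-K-Leg-L-infini-H} et \cite[Th\'eor\`eme~3.8]{BM18Bull} pour $\ell=L_\infty$), the same case division, and your value of $Q(1,\rho;-\rho,1)$ agrees with the paper's since $1+\lambda_0\mu_0=(1-\tfrac{3}{d})(1+\tfrac{3}{d})$. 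The only (welcome) difference is that you explicitly check that the fibre $\Gunderline_{\mathcal H}^{-1}(\Gunderline_{\mathcal H}([-\alpha:\beta]))$ contains no non-fixed critical point before invoking Corollaire~\ref{cor:holomorphie-courbure-homogene-D-ell-fibre-sans-point-critique-non-fixe}, a hypothesis the paper applies implicitly.
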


\begin{proof}[\sl Proof]
We have \begin{small}$\omega_{2}^{d-1}(\lambda,\mu)=A(x,y)\mathrm{d}x+B(x,y)\mathrm{d}y$\end{small}, where \begin{small}$A(x,y)=x^{d-1}+\lambda\,y^{d-1}$\end{small} and \begin{small}$B(x,y)=\mu\,x^{d-1}-y^{d-1}$\end{small}; an immediate computation shows that
\begin{align*}
\mathrm{I}_{\mathcal{H}_{2}^{d-1}(\lambda,\mu)}^{\hspace{0.2mm}\mathrm{inv}}=z(x^d+\mu\,x^{d-1}y+\lambda\,xy^{d-1}-y^d)
&&\text{and}&&
\mathrm{I}_{\mathcal{H}_{2}^{d-1}(\lambda,\mu)}^{\hspace{0.2mm}\mathrm{tr}}=x^{d-2}y^{d-2}.
\end{align*}

\textbf{\textit{1.}} If $\ell=L_\infty$ and $d=3,$ then the web $\Leg(\ell\boxtimes\mathcal{H}_{2}^{2}(\lambda,\mu))$ is flat by Corollary~\ref{cor:Leg-L-infini-H2-plat}.
\vspace{2mm}

\textbf{\textit{2.}} Assume that $\ell=L_\infty$ and $d\geq4.$ Then, according to~\cite[Theorem~3.1~and~3.8]{BM18Bull}, the web $\Leg(\mathcal{H}_{2}^{d-1}(\lambda,\mu))$ is flat if and only if $\mathrm{d}\big(\omega_{2}^{d-1}(\lambda,\mu)\big)$ vanishes on the two lines $xy=0.$ Now,
\begin{align*}
\mathrm{d}\big(\omega_{2}^{d-1}(\lambda,\mu)\big)\Big|_{x=0}=-(d-1)\lambda\,y^{d-2}\mathrm{d}x\wedge\mathrm{d}y
\qquad\text{and}\qquad
\mathrm{d}\big(\omega_{2}^{d-1}(\lambda,\mu)\big)\Big|_{y=0}=(d-1)\mu\,x^{d-2}\mathrm{d}x\wedge\mathrm{d}y.
\end{align*}
Therefore $\Leg(\mathcal{H}_{2}^{d-1}(\lambda,\mu))$ is flat if and only if $\lambda=\mu=0$; hence the same holds for $\Leg(\ell\boxtimes\mathcal{H}_{2}^{d-1}(\lambda,\mu))$ (Theorem~\ref{thm:K-Leg-H-egale-K-Leg-L-infini-H}).
\vspace{2mm}

\textbf{\textit{3.}} Let us consider the case where $\ell\in\{xy=0\}.$ Up to permuting the coordinates $x$ and $y$, we can assume that
$\ell=(x=0).$ According to Theorem~\ref{thm:holomorphie-courbure-G(I^tr)-D-ell}, the $d$-web $\Leg(\ell\boxtimes\mathcal{H}_{2}^{d-1}(\lambda,\mu))$ is flat if and only if its curvature is holomorphic on $\mathcal{G}_{\mathcal{H}_{2}^{d-1}(\lambda,\mu)}(\{xy=0\}).$ Now, on the one hand, $K(\Leg(\ell\boxtimes\mathcal{H}_{2}^{d-1}(\lambda,\mu)))$ is holomorphic on $D_\ell=\mathcal{G}_{\mathcal{H}_{2}^{d-1}(\lambda,\mu)}(\ell)$ if and only if $\mathrm{d}\big(\omega_{2}^{d-1}(\lambda,\mu)\big)$ vanishes on $\ell=(x=0)$ (Corollary~\ref{cor:holomorphie-ell-droite-inflexion-maximal}), {\it i.e.} if and only if $\lambda=0.$ On the other hand, according to Corollary~\ref{cor:holomorphie-courbure-droite-inflex-maximal-d-2-T-neq-ell}, $K(\Leg(\ell\boxtimes\mathcal{H}_{2}^{d-1}(\lambda,\mu)))$ is holomorphic on $\mathcal{G}_{\mathcal{H}_{2}^{d-1}(\lambda,\mu)}(\{y=0\})$ if and only if
\[
0=(d-3)\big(\partial_{x}B(1,0)-\partial_{y}A(1,0)\big)+3(d-1)B(1,0)=d(d-1)\mu\Longleftrightarrow\mu=0.
\]
It follows that $\Leg(\ell\boxtimes\mathcal{H}_{2}^{d-1}(\lambda,\mu))$ is flat if and only if $\lambda=\mu=0.$
\vspace{2mm}

\textbf{\textit{4.}} Let us examine the case where $\ell=(y-\rho\,x=0)$ with $\rho\neq0.$ By Corollary~\ref{cor:holomorphie-courbure-droite-inflex-maximal-d-2-T-neq-ell}, $K(\Leg(\ell\boxtimes\mathcal{H}_{2}^{d-1}(\lambda,\mu)))$ is holomorphic on~$\mathcal{G}_{\mathcal{H}_{2}^{d-1}(\lambda,\mu)}(\{xy=0\})$ if and only if
\begin{Small}
\begin{align*}
\left\{
\begin{array}[l]{l}
0=-(d-3)\Big(\partial_{x}B(0,-1)-\partial_{y}A(0,-1)\Big)-3(d-1)\Big(A(0,-1)+\rho B(0,-1)\Big)=(-1)^{d}(d-1)(d\lambda-3\rho)
\\
\\
0=-\rho(d-3)\Big(\partial_{x}B(1,0)-\partial_{y}A(1,0)\Big)-3(d-1)\Big(A(1,0)+\rho B(1,0)\Big)=-(d-1)(d\rho\mu+3),
\end{array}
\right.
\end{align*}
\end{Small}
\hspace{-1mm}{\it i.e.} if and only if $\lambda=\lambda_0:=\frac{3\rho}{d}$,\, $\mu=\mu_0:=-\frac{3}{d\rho}$\, and \,$d\neq3$, because $\lambda\mu\neq-1.$ We now distinguish two cases according to whether or not $\ell$ is invariant by $\mathcal{H}_{2}^{d-1}(\lambda_0,\mu_0).$
\vspace{2mm}

\textbf{\textit{4.1.}} Assume that $\ell$ is invariant by $\mathcal{H}_{2}^{d-1}(\lambda_0,\mu_0).$ Then the dual web of $\ell\boxtimes\mathcal{H}_{2}^{d-1}(\lambda_0,\mu_0)$ is flat by~Theorem~\ref{thm:holomorphie-courbure-G(I^tr)-D-ell}. Since $\mathrm{I}_{\mathcal{H}_{2}^{d-1}(\lambda_0,\mu_0)}^{\hspace{0.2mm}\mathrm{inv}}\Big|_{y=\rho x}=\left(\frac{3}{d}-1\right)\left(\rho^d-1\right)z\,x^d,$ the invariance of $\ell$ by $\mathcal{H}_{2}^{d-1}(\lambda_0,\mu_0)$ is~equivalent~to~$\rho^d=1$; as a consequence $(\rho,\lambda_0,\mu_0)\in\left\{\Big(\xi'^{2k},\frac{3\xi'^{2k}}{d},-\frac{3}{d\xi'^{2k}}\Big),k=0,\ldots,d-1\right\}.$ Up to conjugation,~$(\rho,\lambda_0,\mu_0)=(1,\frac{3}{d},-\frac{3}{d})$; indeed, putting $\varphi(x,y)=(x,\xi'^{2k}y)$ we have
\begin{align*}
\varphi^*\left(\big(y-\xi'^{2k}x\big)\omega_{2}^{d-1}\left(\tfrac{3\xi'^{2k}}{d},-\tfrac{3}{d\xi'^{2k}}\right)\right)
=\xi'^{2k}\left(y-x\right)\omega_{2}^{d-1}\left(\tfrac{3}{d},-\tfrac{3}{d}\right).
\end{align*}
\vspace{2mm}

\textbf{\textit{4.2.}} Assume that $\ell$ is not invariant by $\mathcal{H}_{2}^{d-1}(\lambda_0,\mu_0).$ Then, by~Theorem~\ref{thm:holomorphie-courbure-G(I^tr)-D-ell} and Corollary~\ref{cor:holomorphie-courbure-homogene-D-ell-fibre-sans-point-critique-non-fixe}, the flatness of $\Leg(\ell\boxtimes\mathcal{H}_{2}^{d-1}(\lambda_0,\mu_0))$ translates into $Q(1,\rho;-\rho,1)=0,$ where
\begin{small}
\begin{align*}
&\hspace{-4cm}Q(x,y;-\rho,1)=\left(\mu_0-\rho^{d-1}\right)\frac{\partial{P}}{\partial{x}}-\left(\lambda_0\rho^{d-1}+1\right)\frac{\partial{P}}{\partial{y}}
\\
&\hspace{-5.17cm}{\fontsize{11}{11pt}\text{and}}
\\
&\hspace{-4cm}P(x,y;-\rho,1)=\frac{\left(\lambda_0\mu_0+1\right)\left(y^{d-1}-(\rho x)^{d-1}\right)}{y-\rho x}
=\left(\lambda_0\mu_0+1\right)\sum_{i=0}^{d-2}\rho^i x^i y^{d-2-i}.
\end{align*}
\end{small}
\hspace{-1mm}Hence \begin{small}$Q(1,\rho;-\rho,1)=\frac{1}{2}\left(\frac{3}{d}-1\right)\left(\frac{3}{d}+1\right)^2(d-1)(d-2)\rho^{d-3}(\rho^d+1)$\end{small}, and consequently $\Leg(\ell\boxtimes\mathcal{H}_{2}^{d-1}(\lambda_0,\mu_0))$ is flat if and only if $\rho^d=-1$, hence if and only if $(\rho,\lambda_0,\mu_0)\in\left\{\Big(\xi'^{2k+1},\frac{3\xi'^{2k+1}}{d},-\frac{3}{d\xi'^{2k+1}}\Big),k=0,\ldots,d-1\right\}$, or equivalently, if and only if, up to conjugation,  $(\rho,\lambda_0,\mu_0)=\Big(\xi',\frac{3\xi'}{d},-\frac{3}{d\xi'}\Big),$ because
\begin{align*}
\varphi^*\left(\big(y-\xi'^{2k+1}x\big)\omega_{2}^{d-1}\left(\tfrac{3\xi'^{2k+1}}{d},-\tfrac{3}{d\xi'^{2k+1}}\right)\right)
=\xi'^{2k}\left(y-\xi'x\right)\omega_{2}^{d-1}\left(\tfrac{3\xi'}{d},-\tfrac{3}{d\xi'}\right).
\end{align*}
\end{proof}

\begin{lem}\label{lem:platitude-Leg-ell-H3}
{\sl The $d$-web $\Leg\big(\ell\boxtimes\mathcal{H}_{3}^{d-1}(\lambda)\big)$ is flat if and only if one of the following cases holds:
\begin{itemize}
\item [(i)]   $\ell=L_\infty$ and $d=3$;

\item [(ii)]  $\ell=(dy+3x=0),$ $d\geq4$ and $\lambda=\dfrac{(-1)^d(d-3)d^{d-2}}{3^{d-1}}$;

\item [(iii)] $\ell=(dy+3x=0)$ and $\lambda=\dfrac{(-1)^d(d+3)d^{d-2}}{3^{d-1}}.$
\end{itemize}
}
\end{lem}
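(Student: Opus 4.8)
\noindent\emph{Plan de la démonstration.}
Je commencerais par poser $\omega_{3}^{d-1}(\lambda)=A\,\mathrm{d}x+B\,\mathrm{d}y$ avec $A=x^{d-1}+\lambda\,y^{d-1}$ et $B=x^{d-1}$, de sorte que $\Gunderline_{\mathcal{H}}(z)=-A(1,z)/B(1,z)=-(1+\lambda\,z^{d-1})$. Le seul point critique non fixe de $\Gunderline_{\mathcal{H}}$ est $z=0$, de multiplicité $d-2$, tandis que $z=\infty$ en est un point critique fixe; ainsi $\mathcal{H}_{3}^{d-1}(\lambda)$ admet l'unique droite d'inflexion transverse $T=(y=0)$, d'ordre maximal $d-2$ (d'où $\ItrH=(y=0)^{d-2}$ et $\mathcal{G}_{\mathcal{H}}(\ItrH)=\mathcal{G}_{\mathcal{H}}(T)$), et la droite radiale $(x=0)$, ce qui retrouve le type $1\cdot\mathrm{R}_{d-2}+1\cdot\mathrm{T}_{d-2}$. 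Je relèverais au passage que $\mathrm{d}\big(\omega_{3}^{d-1}(\lambda)\big)=(d-1)(x^{d-2}-\lambda\,y^{d-2})\,\mathrm{d}x\wedge\mathrm{d}y$ ne s'annule jamais sur $T=(y=0)$. Le plan est d'appliquer le Théorème~\ref{thm:holomorphie-courbure-G(I^tr)-D-ell} puis ses corollaires effectifs, en discutant selon la position de $\ell$.

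Pour $\ell=L_\infty$, le Théorème~\ref{thm:K-Leg-H-egale-K-Leg-L-infini-H} donne $K(\Leg\preh)=K(\Leg\mathcal{H}_{3}^{d-1}(\lambda))$, et je me ramène donc à la platitude de $\Leg\mathcal{H}_{3}^{d-1}(\lambda)$. Si $d=3$, ce dernier est un $2$-tissu, donc plat: c'est le cas~(i). Si $d\geq4$, les Théorèmes~3.1~et~3.8~de~\cite{BM18Bull} — exactement comme à l'étape~\textbf{\textit{2.}}~du Lemme~\ref{lem:platitude-Leg-ell-H2} — ramènent cette platitude à l'annulation de $\mathrm{d}\omega_{3}^{d-1}(\lambda)$ sur $T$, laquelle est en défaut; il n'y a donc pas de platitude.

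Je supposerais ensuite $O\in\ell$. Lorsque $\ell=T=(y=0)$, le Corollaire~\ref{cor:holomorphie-ell-droite-inflexion-maximal} identifie l'holomorphie de $K(\Leg\preh)$ sur $D_\ell$ à l'annulation de $\mathrm{d}\omega_{3}^{d-1}(\lambda)$ sur $(y=0)$, à nouveau impossible: pas de platitude. Lorsque $\ell=(\alpha\,x+\beta\,y=0)\neq T$, le Théorème~\ref{thm:holomorphie-courbure-G(I^tr)-D-ell} impose au moins l'holomorphie de $K(\Leg\preh)$ sur $\mathcal{G}_{\mathcal{H}}(T)$; en appliquant le Corollaire~\ref{cor:holomorphie-courbure-droite-inflex-maximal-d-2-T-neq-ell} avec $T=(y=0)$ (soit $a=0$, $b=1$), cette holomorphie se réduit à $(d-1)(d\alpha-3\beta)=0$, c'est-à-dire à $\ell=(dy+3x=0)$, de direction $z=-3/d$. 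C'est donc la seule droite restant à examiner.

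Je distinguerais enfin selon que $\ell=(dy+3x=0)$ est invariante ou non par $\mathcal{H}_{3}^{d-1}(\lambda)$. L'invariance équivaut à ce que $z=-3/d$ soit fixe par $\Gunderline_{\mathcal{H}}$, soit $-(1+\lambda(-3/d)^{d-1})=-3/d$, ce qui fournit après simplification $\lambda=\frac{(-1)^d(d-3)d^{d-2}}{3^{d-1}}$ (dans $\C^*$ seulement si $d\geq4$); la branche invariante du Théorème~\ref{thm:holomorphie-courbure-G(I^tr)-D-ell} assure alors la platitude, d'où le cas~(ii). Sinon $\ell$ n'est pas invariante, et comme $\lambda\neq0$ la fibre $\Gunderline_{\mathcal{H}}^{-1}\big(\Gunderline_{\mathcal{H}}([-\alpha:\beta])\big)$ ne rencontre pas le point critique non fixe $z=0$; le Corollaire~\ref{cor:holomorphie-courbure-homogene-D-ell-fibre-sans-point-critique-non-fixe} ramène alors l'holomorphie de $K(\Leg\preh)$ sur $D_\ell$ à l'équation $Q(d,-3;3,d)=0$. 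C'est ce dernier calcul qui constituera la principale difficulté: le point clé sera la factorisation $P(x,y;3,d)=\lambda\,\dfrac{(dy)^{d-1}-(-3x)^{d-1}}{dy+3x}=\lambda\sum_{i=0}^{d-2}(dy)^{i}(-3x)^{d-2-i}$, qui rend explicites $\partial_xP$, $\partial_yP$ et leurs valeurs en $(d,-3)$; l'équation $Q(d,-3;3,d)=0$ se simplifiera alors en $\lambda=\frac{(-1)^d(d+3)d^{d-2}}{3^{d-1}}$, soit le cas~(iii). Il restera à constater que $(dy+3x=0)$ diffère de $T=(y=0)$ et de la droite radiale $(x=0)$, et que les deux valeurs de $\lambda$ des cas~(ii) et~(iii) sont distinctes et non nulles — l'une correspondant à $\ell$ invariante, l'autre non —, ce qui clôt la discussion.
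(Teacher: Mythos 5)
Votre plan est correct et suit essentiellement la même démarche que la preuve du texte : même découpage en cas ($\ell=L_\infty$ via le Théorème~\ref{thm:K-Leg-H-egale-K-Leg-L-infini-H}, $\ell=(y=0)$ via le Corollaire~\ref{cor:holomorphie-ell-droite-inflexion-maximal}, puis réduction à $\ell=(dy+3x=0)$ par le Corollaire~\ref{cor:holomorphie-courbure-droite-inflex-maximal-d-2-T-neq-ell}, et discussion invariant/non invariant via le Théorème~\ref{thm:holomorphie-courbure-G(I^tr)-D-ell} et le Corollaire~\ref{cor:holomorphie-courbure-homogene-D-ell-fibre-sans-point-critique-non-fixe}), et le calcul final de $Q(d,-3;3,d)$ que vous esquissez aboutit bien aux valeurs annoncées de $\lambda$. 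La seule variante, mineure et équivalente, est que vous testez l'invariance de $(dy+3x=0)$ par le point fixe de $\Gunderline_{\mathcal{H}}$ plutôt qu'en évaluant $\mathrm{I}^{\mathrm{inv}}$ sur cette droite.
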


\begin{proof}[\sl Proof]
We have $\omega_{3}^{d-1}(\lambda)=A(x,y)\mathrm{d}x+B(x,y)\mathrm{d}y$, where $A(x,y)=x^{d-1}+\lambda y^{d-1}$ and $B(x,y)=x^{d-1}$; an immediate computation leads to
\begin{align*}
\mathrm{I}_{\mathcal{H}_{3}^{d-1}(\lambda)}^{\hspace{0.2mm}\mathrm{inv}}=z\,x^{d-1}\big(x^{d-1}+x^{d-2}y+\lambda\,y^{d-1}\big)
&&\text{and}&&
\mathrm{I}_{\mathcal{H}_{3}^{d-1}(\lambda)}^{\hspace{0.2mm}\mathrm{tr}}=y^{d-2}.
\end{align*}

\textbf{\textit{1.}} Assume that $\ell=L_\infty.$ If $d=3$, then the web $\Leg(\ell\boxtimes\mathcal{H}_{3}^{2}(\lambda))$ is flat, thanks to Corollary~\ref{cor:Leg-L-infini-H2-plat}. For~$d\geq4,$ the webs $\Leg(\mathcal{H}_{3}^{d-1}(\lambda))$ and $\Leg(\ell\boxtimes\mathcal{H}_{3}^{d-1}(\lambda))$ have the same curvature (Theorem~\ref{thm:K-Leg-H-egale-K-Leg-L-infini-H}) and cannot be flat. Indeed, we have $$\mathrm{d}\big(\omega_{3}^{d-1}(\lambda)\big)\Big|_{y=0}=(d-1)x^{d-2}\mathrm{d}x\wedge\mathrm{d}y\not\equiv0;$$ this implies, according to \cite[Theorem~3.8]{BM18Bull}, that $K(\Leg(\mathcal{H}_{3}^{d-1}(\lambda)))$ cannot be holomorphic along $\mathcal{G}_{\mathcal{H}_{3}^{d-1}(\lambda)}(\{y=0\}).$
\vspace{2mm}

\textbf{\textit{2.}} If $\ell=(y=0),$ then the fact that $\mathrm{d}\big(\omega_{3}^{d-1}(\lambda)\big)$ does not vanish on $\ell$ implies, by Corollary~\ref{cor:holomorphie-ell-droite-inflexion-maximal}, that~$K(\Leg(\ell\boxtimes\mathcal{H}_{3}^{d-1}(\lambda)))$ cannot be holomorphic on $\mathcal{G}_{\mathcal{H}_{3}^{d-1}(\lambda)}(\ell),$ so that $\Leg(\ell\boxtimes\mathcal{H}_{3}^{d-1}(\lambda))$ cannot be flat.
\vspace{2mm}

\textbf{\textit{3.}} Assume that $\ell=(x-\rho\,y=0),$ where $\rho\in\C.$ By Corollary~\ref{cor:holomorphie-courbure-droite-inflex-maximal-d-2-T-neq-ell}, $K(\Leg(\ell\boxtimes\mathcal{H}_{3}^{d-1}(\lambda)))$ is holomorphic on $\mathcal{G}_{\mathcal{H}_{3}^{d-1}(\lambda)}(\{y=0\})$ if and only if
\begin{small}
\begin{align*}
0=(d-3)\Big(\partial_{x}B(1,0)-\partial_{y}A(1,0)\Big)+3(d-1)\Big(B(1,0)+\rho A(1,0)\Big)=(d-1)(3\rho+d),
\end{align*}
\end{small}
\hspace{-1mm}hence if and only if $\rho=-\frac{d}{3}$, which is equivalent to $\ell=\ell_0$ where $\ell_0=(dy+3x=0).$ Then we have to distinguish two cases:
\vspace{2mm}

\textbf{\textit{3.1.}} If $\ell_0$ is invariant by $\mathcal{H}_{3}^{d-1}(\lambda)$, then Theorem~\ref{thm:holomorphie-courbure-G(I^tr)-D-ell} ensures that the $d$-web $\Leg(\ell_0\boxtimes\mathcal{H}_{3}^{d-1}(\lambda))$ is flat; since
\begin{small}
\begin{align*}
\mathrm{I}_{\mathcal{H}_{3}^{d-1}(\lambda)}^{\hspace{0.2mm}\mathrm{inv}}\Big|_{x=-\frac{d}{3} y}=-d^{d-1}z\left(\frac{y}{3}\right)^{2d-2}\left((-1)^{d}3^{d-1}\lambda-(d-3)d^{d-2}\right),
\end{align*}
\end{small}
\hspace{-1mm}the invariance of $\ell_0$ by $\mathcal{H}_{3}^{d-1}(\lambda)$ is characterized by $\lambda=\dfrac{(-1)^d(d-3)d^{d-2}}{3^{d-1}}$ and $d\neq3$, because $\lambda\neq0.$
\vspace{2mm}

\textbf{\textit{3.2.}} Assume that $\ell_0$ is not invariant by $\mathcal{H}_{3}^{d-1}(\lambda).$ Then, according to Theorem~\ref{thm:holomorphie-courbure-G(I^tr)-D-ell} and Corollary~\ref{cor:holomorphie-courbure-homogene-D-ell-fibre-sans-point-critique-non-fixe}, the~$d$-web $\Leg(\ell_0\boxtimes\mathcal{H}_{3}^{d-1}(\lambda))$ is flat if and only if $Q(d,-3;3,d)=0,$ where
\begin{small}
\begin{align*}
&\hspace{-5.1cm}Q(x,y;3,d)=d^{d-1}\frac{\partial{P}}{\partial{x}}-\left(d^{d-1}+(-3)^{d-1}\lambda\right)\frac{\partial{P}}{\partial{y}}
\\
&\hspace{-6.35cm}{\fontsize{11}{11pt}\text{and}}
\\
&\hspace{-5.1cm}P(x,y;3,d)=\frac{\lambda\left((dy)^{d-1}-(-3x)^{d-1}\right)}{dy+3x}=\lambda\sum_{i=0}^{d-2}(-3x)^i(dy)^{d-2-i}.
\end{align*}
\end{small}
\hspace{-1mm}Thus \begin{Small}$Q(d,-3;3,d)=-\frac{1}{6}\lambda(d-1)(d-2)(3d)^{d-2}\Big(3^{d-1}\lambda-(-1)^{d}(d+3)d^{d-2}\Big)$\end{Small} and the flatness of $\Leg(\ell_0\boxtimes\mathcal{H}_{3}^{d-1}(\lambda))$ translates into $\lambda=\dfrac{(-1)^d(d+3)d^{d-2}}{3^{d-1}}.$
\end{proof}

\noindent Lemmas~\ref{lem:platitude-Leg-ell-H1}, \ref{lem:platitude-Leg-ell-H2} and \ref{lem:platitude-Leg-ell-H3} imply the following proposition.
\begin{pro}\label{pro:class-pre-homogenes-plats-co-degre-1-degre-d-geq-4-degre-Type-2}
{\sl Let $\preh=\ell\boxtimes\mathcal{H}$ be a homogeneous pre-foliation of co-degree $1$ and degree $d\geq3$~on~$\pp.$ Assume that $\deg\mathcal{T}_{\mathcal{H}}=2,$ or equivalently that the map $\Gunderline_{\mathcal H}$ has exactly two critical points. Then, for~$d\geq4$ the~web $\Leg\preh$ is flat if and only if $\preh$ is linearly conjugate to one of the ten following pre-foliations
\vspace{2mm}

\begin{itemize}
\item [\texttt{1.}]\hspace{1mm} $\preh_{1}^{d}=L_\infty\boxtimes\mathcal{H}_{1}^{d-1}$;
\smallskip

\item [\texttt{2.}]\hspace{1mm} $\preh_{2}^{d}=\{x=0\}\boxtimes\mathcal{H}_{1}^{d-1}$;
\smallskip

\item [\texttt{3.}]\hspace{1mm} $\preh_{3}^{d}=\{y-x=0\}\boxtimes\mathcal{H}_{1}^{d-1}$;
\smallskip

\item [\texttt{4.}]\hspace{1mm} $\preh_{4}^{d}=\{y-\xi\,x=0\}\boxtimes\mathcal{H}_{1}^{d-1}$,\, where $\xi=\mathrm{exp}\left(\frac{\mathrm{i}\pi}{d-2}\right)$;
\smallskip

\item [\texttt{5.}]\hspace{1mm} $\preh_{5}^{d}=\{x=0\}\boxtimes\mathcal{H}_{2}^{d-1}(0,0)$;
\smallskip

\item [\texttt{6.}]\hspace{1mm} $\preh_{6}^{d}=\{dy+3x=0\}\boxtimes\mathcal{H}_{3}^{d-1}(\lambda_0)$,\, where $\lambda_0=\frac{(-1)^d(d+3)d^{d-2}}{3^{d-1}}$;
\smallskip

\item [\texttt{7.}]\hspace{1mm} $\preh_{7}^{d}=\{dy+3x=0\}\boxtimes\mathcal{H}_{3}^{d-1}(\lambda_1)$,\, where $\lambda_1=\frac{(-1)^d(d-3)d^{d-2}}{3^{d-1}}$;
\smallskip

\item [\texttt{8.}]\hspace{1mm} $\preh_{8}^{d}=L_\infty\boxtimes\mathcal{H}_{2}^{d-1}(0,0)$;

\item [\texttt{9.}]\hspace{1mm} $\preh_{9}^{d}=\{y-x=0\}\boxtimes\mathcal{H}_{2}^{d-1}\big(\tfrac{3}{d},-\tfrac{3}{d}\big)$;
\smallskip

\item [\texttt{10.}]            $\preh_{10}^{d}=\{y-\xi'\,x=0\}\boxtimes\mathcal{H}_{2}^{d-1}\big(\tfrac{3\xi'}{d},-\tfrac{3}{d\xi'}\big)$,\, where
                                   $\xi'=\mathrm{exp}\left(\frac{\mathrm{i}\pi}{d}\right)$.
\end{itemize}
\vspace{2mm}

\noindent For $d=3$ the web $\Leg\preh$ is flat if and only if, up to linear conjugation, either $\preh$ is one of the six pre-foliations $\preh_{1}^{3},\preh_{2}^{3},\ldots,\preh_{6}^{3},$ or $\preh$ is of one of the following two types
\vspace{2mm}

\begin{itemize}
\item [\texttt{11.}] $\preh_{7}^{3}(\lambda)=L_\infty\boxtimes\mathcal{H}_{3}^{2}(\lambda)$,\, where $\lambda\in\C^*$;
\smallskip

\item [\texttt{12.}] $\preh_{8}^{3}(\lambda,\mu)=L_\infty\boxtimes\mathcal{H}_{2}^{2}(\lambda,\mu)$,\, where $\lambda,\mu\in\C$ with $\lambda\mu\neq-1.$
\end{itemize}
}
\end{pro}

\vspace{2mm}

\noindent Combining Proposition~\ref{pro:class-pre-homogenes-plats-co-degre-1-degre-d-geq-4-degre-Type-2} with the fact that every homogeneous foliation of degree $2$ on $\pp$ has degree of~type $2$, we obtain the classification, up to automorphism of $\pp$, of homogeneous pre-foliations of type~$(1,3)$ on $\pp$ whose dual web is flat.
\begin{cor}\label{cor:class-pre-homogenes-plats-co-degre-1-degre-3}
{\sl Up to automorphism of $\pp$, there are six examples and two families of homogeneous pre-foliations of co-degree $1$ and degree $3$ on $\mathbb {P}^{2}_{\mathbb{C}}$ with a flat \textsc{Legendre} transform, namely:
\begin{itemize}
\item [\texttt{1.}]\hspace{1mm} $\preh_{1}^{3}=L_\infty\boxtimes\mathcal{H}_{1}^{2}$;
\smallskip

\item [\texttt{2.}]\hspace{1mm} $\preh_{2}^{3}=\{x=0\}\boxtimes\mathcal{H}_{1}^{2}$;
\smallskip

\item [\texttt{3.}]\hspace{1mm} $\preh_{3}^{3}=\{y-x=0\}\boxtimes\mathcal{H}_{1}^{2}$;
\smallskip

\item [\texttt{4.}]\hspace{1mm} $\preh_{4}^{3}=\{y+x=0\}\boxtimes\mathcal{H}_{1}^{2}$;
\smallskip

\item [\texttt{5.}]\hspace{1mm} $\preh_{5}^{3}=\{x=0\}\boxtimes\mathcal{H}_{2}^{2}(0,0)$;
\smallskip

\item [\texttt{6.}]\hspace{1mm} $\preh_{6}^{3}=\{y+x=0\}\boxtimes\mathcal{H}_{3}^{2}(-2)$;
\smallskip

\item [\texttt{7.}]\hspace{1mm} $\preh_{7}^{3}(\lambda)=L_\infty\boxtimes\mathcal{H}_{3}^{2}(\lambda)$,\, where $\lambda\in\C^*$;
\smallskip

\item [\texttt{8.}]\hspace{1mm} $\preh_{8}^{3}(\lambda,\mu)=L_\infty\boxtimes\mathcal{H}_{2}^{2}(\lambda,\mu)$,\, where $\lambda,\mu\in\C$ with $\lambda\mu\neq-1.$
\end{itemize}
}
\end{cor}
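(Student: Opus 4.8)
The plan is to deduce this classification directly from Proposition~\ref{pro:class-pre-homogenes-plats-co-degre-1-degre-d-geq-4-degre-Type-2}, specialized to $d=3$, after observing that its standing hypothesis $\deg\mathcal{T}_{\mathcal{H}}=2$ becomes automatic in this degree. First I would recall that a homogeneous pre-foliation of type $(1,3)$ is of the form $\preh=\ell\boxtimes\mathcal{H}$ with $\mathcal{H}$ a homogeneous foliation of degree $2$, so that the associated rational map $\Gunderline_{\mathcal{H}}\colon\mathbb{P}^{1}_{\mathbb{C}}\to\mathbb{P}^{1}_{\mathbb{C}}$, $[y:x]\mapsto[-A(x,y):B(x,y)]$ with $A,B\in\C[x,y]_{2}$ and $\pgcd(A,B)=1$, has degree exactly $2$. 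Any such self-map of $\mathbb{P}^{1}_{\mathbb{C}}$ is conjugate, up to homography at the source and target, to $z\mapsto z^{2}$, and thus possesses precisely two critical points; equivalently, the Riemann--Hurwitz formula gives a total ramification $2(d-1)-2=2$ in degree $d=3$. Hence $\deg\mathcal{T}_{\mathcal{H}}=2$ holds for every homogeneous foliation of degree $2$, which is exactly the assertion announced before the corollary.

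Having settled this, I would invoke the case $d=3$ of Proposition~\ref{pro:class-pre-homogenes-plats-co-degre-1-degre-d-geq-4-degre-Type-2}: since the condition $\deg\mathcal{T}_{\mathcal{H}}=2$ is now unconditional, that statement applies to \emph{every} homogeneous pre-foliation of type $(1,3)$ and yields at once that $\Leg\preh$ is flat if and only if $\preh$ is linearly conjugate to one of the six pre-foliations $\preh_{1}^{3},\ldots,\preh_{6}^{3}$ or lies in one of the two families $\preh_{7}^{3}(\lambda)=L_\infty\boxtimes\mathcal{H}_{3}^{2}(\lambda)$, $\lambda\in\C^{*}$, and $\preh_{8}^{3}(\lambda,\mu)=L_\infty\boxtimes\mathcal{H}_{2}^{2}(\lambda,\mu)$, $\lambda\mu\neq-1$. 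The flatness of the two families also follows directly from Corollary~\ref{cor:Leg-L-infini-H2-plat}, which guarantees $\Leg(L_\infty\boxtimes\mathcal{H})$ flat for any homogeneous $\mathcal{H}$ of degree $2$.

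What remains is purely the bookkeeping of specializing the parameters of the proposition's list to $d=3$. I would record that $\xi=\exp(\tfrac{\mathrm{i}\pi}{d-2})$ specializes to $\xi=\exp(\mathrm{i}\pi)=-1$, whence $\preh_{4}^{3}=\{y+x=0\}\boxtimes\mathcal{H}_{1}^{2}$, and that $\lambda_{0}=\frac{(-1)^{d}(d+3)d^{d-2}}{3^{d-1}}$ specializes to $\lambda_{0}=-2$, whence $\preh_{6}^{3}=\{y+x=0\}\boxtimes\mathcal{H}_{3}^{2}(-2)$. The main point to watch, and the only genuine subtlety, is that the item $\{dy+3x=0\}\boxtimes\mathcal{H}_{3}^{d-1}(\lambda_{1})$ with $\lambda_{1}=\frac{(-1)^{d}(d-3)d^{d-2}}{3^{d-1}}$ degenerates in degree $3$, since $\lambda_{1}=0$ is excluded from the definition of $\mathcal{H}_{3}^{d-1}$; this case therefore disappears from the isolated examples. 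Correspondingly, the members with $\ell=L_\infty$ and $\mathcal{H}\in\{\mathcal{H}_{2}^{2},\mathcal{H}_{3}^{2}\}$, which were isolated for $d\geq4$, now sweep out the two announced families, giving exactly six examples and two families as claimed.
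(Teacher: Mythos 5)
Your proposal is correct and takes essentially the same route as the paper, which obtains the corollary by combining the case $d=3$ of la Proposition~\ref{pro:class-pre-homogenes-plats-co-degre-1-degre-d-geq-4-degre-Type-2} with the observation that every homogeneous foliation of degree $2$ on $\pp$ has type degree $2$. Your Riemann--Hurwitz justification of that observation and the specializations $\xi=-1$, $\lambda_0=-2$ and the degeneration $\lambda_1=0$ are exactly the bookkeeping the paper leaves implicit.
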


\noindent In Section~\S\ref{sec:pre-feuilletages-codegre-1-degre-3} we will need, for $\mathcal{H}\in\{\mathcal{H}_{1}^{2},\mathcal{H}_{2}^{2}(0,0),\mathcal{H}_{3}^{2}(-2)\}$, the values of the \textsc{Camacho-Sad}
indices $\mathrm{CS}(\mathcal{H},L_{\infty},s)$,\label{not:indice-CS} $s\in\Sing\mathcal{H}\cap L_{\infty}$. For this, we have computed, for each of these three foliations, the following polynomial (called \textsl{\textsc{Camacho-Sad} polynomial of the homogeneous foliation} $\mathcal{H}$)
\begin{align*}
\mathrm{CS}_{\mathcal{H}}(\lambda)=\prod\limits_{s\in\Sing\mathcal{H}\cap L_{\infty}}(\lambda-\mathrm{CS}(\mathcal{H},L_{\infty},s)).
\end{align*}

\noindent The following table summarizes the types and the \textsc{Camacho-Sad} polynomials of the foliations $\mathcal{H}_{1}^{2}$, $\mathcal{H}_{2}^{2}(0,0)$ and~$\mathcal{H}_{3}^{2}(-2)$.

\begingroup
\renewcommand*{\arraystretch}{1.5}
\begin{table}[h]
\begin{center}
\begin{tabular}{|c|c|c|}\hline
$\mathcal{H}$               & $\mathcal{T}_{\mathcal{H}}$             &  $\mathrm{CS}_{\mathcal{H}}(\lambda)$
\\\hline
$\mathcal{H}_{1}^{2}$       & $2\cdot\mathrm{R}_1$                    &  $(\lambda-1)^{2}(\lambda+1)$
\\\hline
$\mathcal{H}_{2}^{2}(0,0)$  & $2\cdot\mathrm{T}_1$                    &  $(\lambda-\frac{1}{3})^{3}$
\\\hline
$\mathcal{H}_{3}^{2}(-2)$   & $1\cdot\mathrm{R}_1+1\cdot\mathrm{T}_1$ & $(\lambda-1)(\lambda-\frac{1}{3})(\lambda+\frac{1}{3})$
\\\hline
\end{tabular}
\end{center}
\bigskip
\caption{Types and \textsc{Camacho-Sad} polynomials of the foliations $\mathcal{H}_{1}^{2}$, $\mathcal{H}_{2}^{2}(0,0)$ and $\mathcal{H}_{3}^{2}(-2)$.}
\label{tab:CS(lambda)}
\end{table}
\endgroup
%==================================================================================================================================================================

\section{Pre-foliations of co-degree $1$ whose associated foliation is reduced convex}\label{sec:pre-pre-feuill-convexe-reduit}
\bigskip

\noindent We now give the proofs of Theorem~\ref{thmalph:ell-invariante-convexe-reduit-plat} and Propositions~\ref{proalph:ell-non-invariante-Fermat}~and~\ref{proalph:ell-non-invariante-Hesse} stated in the Introduction.
\begin{proof}[\sl Proof of Theorem~\ref{thmalph:ell-invariante-convexe-reduit-plat}]
Since by hypothesis $\F$ is reduced convex, all its singularities are non-degenerate (\cite[Lemma~6.8]{BM18Bull}). According to~\cite[Lemma~2.2]{BFM14}, the discriminant of $\Leg\F$ then consists of the lines dual to~the radial singularities of $\F.$ The first assertion of Lemma~\ref{lem:Delta-Leg-ell-F} therefore implies that
\begin{align*}
\Delta(\Leg\pref)=\check{\Sigma}_{\F}^{\mathrm{rad}}\cup\check{\Sigma}_{\F}^{\ell}.
\end{align*}
To show that the curvature of $\Leg\pref$ is identically zero, it suffices therefore to show that it is holomorphic along the dual line of every point of $\Sigma_{\F}^{\mathrm{rad}}\cup\Sigma_{\F}^{\ell}.$ Let $s$ be an arbitrary point of $\Sigma_{\F}^{\mathrm{rad}}\cup\Sigma_{\F}^{\ell}.$ Denote by $\nu=\tau(\F,s)$\label{not:tau-F-s} the tangency order of $\F$ with a generic line passing through $s$; then $\nu-1$ denotes the radiality order~of~$s$, and $s\in\Sigma_{\F}^{\mathrm{rad}}$ if and only if $\nu\geq2$, \emph{see}~\cite[\S1.3]{BM18Bull}.
By \cite[Proposition~3.3]{MP13}, locally near the line $\check{s}$ dual to $s$, we can decompose $\Leg\F$ as~$\Leg\F=\W_{\nu}\boxtimes\W_{d-\nu-1},$ where $\W_{\nu}$ is an irreducible $\nu$-web leaving $\check{s}$ invariant and whose discriminant $\Delta(\W_{\nu})$ has minimal multiplicity $\nu-1$ along $\check{s}$, and where $\W_{d-\nu-1}$ is a $(d-\nu-1)$-web transverse to $\check{s}.$ Furthermore, the convexity of $\F$ implies, by an argument of the proof of \cite[Theorem~4.2]{MP13}, that the web $\W_{d-\nu-1}$ is regular near $\check{s},$ {\it i.e.} that through a generic point of $\check{s}$ pass $(d-\nu-1)$ distinct tangent lines to $\W_{d-\nu-1}.$

\noindent Thus, near the line $\check{s},$ we have the decomposition
\begin{align*}
\Leg\pref=\Leg\ell\boxtimes\W_{\nu}\boxtimes\W_{d-\nu-1}.
\end{align*}
We now distinguish two cases:
\vspace{2mm}

\textbf{\textit{1.}} If $s\in\ell$ then $\check{s}$ is invariant by $\Leg\ell$; by applying Theorem~1 of \cite{MP13} if $\nu=1$ and Proposition~\ref{pro:holomorphie-courbure-F-W-nu-W-d-nu-1} if $\nu\geq2,$ it follows that $K(\Leg\pref)$ is holomorphic along $\check{s}.$
\vspace{2mm}

\textbf{\textit{2.}} Assume that $s\not\in\ell$; then $s\in\Sigma_{\F}^{\mathrm{rad}}\setminus\Sigma_{\F}^{\ell}.$ In this case the radial foliation $\Leg\ell$ is transverse to $\check{s}.$ From~the~above~discussion, the $(d-\nu)$-web $\W_{d-\nu}:=\Leg\ell\boxtimes\W_{d-\nu-1}$ is therefore also transverse to $\check{s}$ and~we~have $\Leg\pref=\W_{\nu}\boxtimes\W_{d-\nu}.$ Moreover, since $\ell$ is $\F$-invariant, $\Tang(\Leg\ell,\Leg\F)=\check{\Sigma}_{\F}^{\ell}$ (\emph{cf.} proof of Lemma~\ref{lem:Delta-Leg-ell-F}); in particular, $\Tang(\Leg\ell,\W_{d-\nu-1})\subset\check{\Sigma}_{\F}^{\ell}$ and therefore $\check{s}\not\subset\Tang(\Leg\ell,\W_{d-\nu-1}).$ It follows that the web $\W_{d-\nu}$ is regular near $\check{s},$ because $\W_{d-\nu-1}$ is so. As a consequence the curvature of $\Leg\pref$ is holomorphic along $\check{s}$ by applying~\cite[Proposition~2.6]{MP13}.
\end{proof}

\begin{proof}[\sl Proof of Proposition~\ref{proalph:ell-non-invariante-Fermat}]
The \textsc{Fermat} foliation $\F_{0}^{d-1}$ is given in homogeneous coordinates by the $1$-form
\[
\Omegaoverline_{0}^{d-1}=x^{d-1}(y\mathrm{d}z-z\mathrm{d}y)+y^{d-1}(z\mathrm{d}x-x\mathrm{d}z)+z^{d-1}(x\mathrm{d}y-y\mathrm{d}x).
\]
It has the following $3(d-1)$ invariant lines:
\begin{align*}
&
x=0,
&&
y=0,
&&
z=0,
&&
y=\zeta^k x,
&&
y=\zeta^k z,
&&
x=\zeta^k z,
&&\text{where}\hspace{1mm}k\in\{0,\ldots,d-3\}\hspace{2mm}\text{and}\hspace{2mm}\zeta=\exp(\tfrac{2\mathrm{i}\pi}{d-2}).
\end{align*}
Since the coordinates $x,y$ and $z$ play a symmetric role and since $\ell$ is not invariant by $\F_{0}^{d-1}$, we can assume that $\ell=\{\alpha\,x+\beta\,y-z=0\}$ with $\beta\neq0.$ Then $\overline{\mathcal{O}(\ell\boxtimes\F_{0}^{d-1})}$\label{not:orbite-pref} contains the following homogeneous pre-foliations:
\begin{align*}
\preh_1=\{y-\alpha\,x=0\}\boxtimes\mathcal{H}_{1}^{d-1},&&
\preh_2=\{y-\beta\,x=0\}\boxtimes\mathcal{H}_{1}^{d-1},&&
\preh_3=\big\{x-(\alpha+\beta)y=0\big\}\boxtimes\mathcal{H}_{0}^{d-1}.
\end{align*}
Indeed, $\ell\boxtimes\F_{0}^{d-1}$ is described in the affine chart $z=1$ by $\omega=(\alpha\,x+\beta\,y-1)\omegaoverline_{0}^{d-1}$; putting $\varphi_1=\Big(\frac{x}{y},\frac{\varepsilon}{y}\Big),$ $\varphi_2=\Big(\frac{\varepsilon}{y},\frac{x}{y}\Big)$ and $\varphi_3=\Big(\frac{y+\varepsilon}{x},\frac{y}{x}\Big),$ we obtain that
\begin{small}
\begin{align*}
\lim_{\varepsilon\to 0}\varepsilon^{-1}y^{d+2}\varphi_1^*\omega=(y-\alpha\,x)\omega_{1}^{d-1},
&&\hspace{3mm}
\lim_{\varepsilon\to 0}\varepsilon^{-1}y^{d+2}\varphi_2^*\omega=(\beta\,x-y)\omega_{1}^{d-1},
&&\hspace{3mm}
\lim_{\varepsilon\to 0}\varepsilon^{-1}x^{d+2}\varphi_3^*\omega=\Big((\alpha+\beta)y-x\Big)\omega_{0}^{d-1}.
\end{align*}
\end{small}
\hspace{-2.25mm}The hypothesis that $\Leg(\ell\boxtimes\F_{0}^{d-1})$ is flat implies that the webs $\Leg\preh_i$ $(i=1,2,3)$ are also flat. Let~us~show~that~the~flatness of $\Leg\preh_1$ and $\Leg\preh_2$ implies that, up to linear conjugation,
\begin{align*}
(\alpha,\beta)\in E:=\Big\{(0,\xi),(1,1),(1,\xi),(\xi,\xi)\Big\},
\quad\text{where}\hspace{1mm}\xi=\mathrm{exp}\left(\tfrac{\mathrm{i}\pi}{d-2}\right).
\end{align*}
First of all, the $d$-web $\Leg\preh_1,$ resp. $\Leg\preh_2$, is flat if and only if (\emph{cf.}~proof~of~Lemma~\ref{lem:platitude-Leg-ell-H1})
\begin{align*}
\alpha(\alpha^{d-2}-1)(\alpha^{d-2}+1)=0,
\qquad\qquad\hspace{2mm}
\text{resp.}\hspace{1mm}(\beta^{d-2}-1)(\beta^{d-2}+1)=0,
\end{align*}
{\it i.e.} if and only if $\alpha\in\{0,\zeta^{k},\xi\zeta^{k},\,k=0,\ldots,d-3\},$ resp. $\beta\in\{\zeta^{k},\xi\zeta^{k},\,k=0,\ldots,d-3\}.$
If $\alpha=0$ then $\beta\neq\zeta^k$, because otherwise $\ell$ would be invariant by $\F_{0}^{d-1}.$ It follows that
\begin{align*}
(\alpha,\beta)\in
\Big\{
(0,\xi\zeta^{k}),(\zeta^k,\zeta^{k'}),(\zeta^k,\xi\zeta^{k'}),(\xi\zeta^{k},\zeta^{k'}),(\xi\zeta^{k},\xi\zeta^{k'}),\hspace{1.5mm}k,k'=0,\ldots,d-3
\Big\}.
\end{align*}
If, for $k,k'\in\{0,\ldots,d-3\},$
\begin{align*}
(\alpha,\beta)=(0,\xi\zeta^{k}),
&&\text{resp.}\hspace{1mm}
(\alpha,\beta)\in\Big\{(\zeta^k,\zeta^{k'}),(\zeta^k,\xi\zeta^{k'}),(\xi\zeta^{k},\xi\zeta^{k'})\Big\},
&&
&&\text{resp.}\hspace{1mm}
(\alpha,\beta)=(\xi\zeta^{k},\zeta^{k'}),
\end{align*}
then by conjugating $\omega$ by $\Big(x,\frac{y}{\zeta^{k}}\Big)$, resp.~$\Big(\frac{x}{\zeta^{k}},\frac{y}{\zeta^{k'}}\Big),$ resp.~$\Big(\frac{y}{\zeta^{k}},\frac{x}{\zeta^{k'}}\Big),$ we reduce ourselves to $(\alpha,\beta)=(0,\xi)$, resp.~$(\alpha,\beta)\in\{(1,1),(1,\xi),(\xi,\xi)\},$  resp.~$(\alpha,\beta)=(1,\xi).$ Thus, up to conjugation, $(\alpha,\beta)$ belongs to $E.$

\noindent Moreover, according to Example~\ref{eg:H0}, the flatness of $\Leg\preh_3$ is equivalent to
\[
0=(\alpha+\beta)\Big((\alpha+\beta)^{d-2}-1\Big)\Big((\alpha+\beta)^{d-2}-2(d-2)\Big)=:f_d(\alpha,\beta).
\]
Since
\begin{Small}
\begin{align*}
&
f_d(0,\xi)=2\xi(2d-3)\neq0,&&\quad\quad
f_d(1,1)=4(2^{d-2}-1)(2^{d-3}-d+2)=0\Longleftrightarrow d\in\{3,4\},\\
&
f_d(\xi,\xi)=2\xi(2^{d-2}+1)(2^{d-2}+2d-4)\neq0,&&\quad\quad
f_d(1,\xi)=(\xi+1)\Big((\xi+1)^{d-2}-1\Big)\Big((\xi+1)^{d-2}-2(d-2)\Big)=0\Longleftrightarrow d=3,
\end{align*}
\end{Small}
\hspace{-1mm}we deduce that $d\in\{3,4\}$ and, up to conjugation,
\begin{align*}
&(\alpha,\beta)\in\{(1,1),(1,-1)\}\hspace{1mm}\text{if}\hspace{1mm} d=3
&&\text{and}&&
(\alpha,\beta)=(1,1)\hspace{1mm}\text{if}\hspace{1mm} d=4,
\end{align*}
{\it i.e.}, putting $\ell_1=\{x+y-z=0\}$\, and \,$\ell_2=\{x-y-z=0\}$, we have $\ell\in\{\ell_1,\ell_2\}$ if $d=3$\,\, and \,\,$\ell=\ell_1$ if $d=4.$
\vspace{0.1mm}

\noindent To conclude, it suffices to note that $\ell_1=(s_1s_2s_4)$ and $\ell_2=(s_1s_3)$, where $s_1=[1:0:1]$, $s_2=[0:1:1]$, $s_3=[1:1:0]$ and $s_4=[-1:1:0]$: the points $s_1,s_2$ and $s_3$ are singular for $\F_{0}^{d-1}$, and $s_4\in\Sing\F_{0}^{d-1}$ if~and~only~if $d$ is even; in particular the point $s_4$ is singular for $\F_{0}^{3}$ but not for $\F_{0}^{2}.$
\end{proof}

\begin{proof}[\sl Proof of Proposition~\ref{proalph:ell-non-invariante-Hesse}]
The \textsc{Hesse} foliation $\Hesse$ is described in homogeneous coordinates by the $1$-form
\[
\Omegahesse=yz(2\,x^3-y^3-z^3)\mathrm{d}x+xz(2y^3-x^3-z^3)\mathrm{d}y+xy(2z^3-x^3-y^3)\mathrm{d}z.
\]
Its $12$ invariant lines are given by
\begin{Small}
\begin{align*}
xyz(x+y+z)(\zeta\,x+y+z)(x+\zeta\,y+z)(x+y+\zeta\,z)(\zeta^2x+y+z)(x+\zeta^2y+z)(x+y+\zeta^2z)(\zeta^2x+\zeta\,y+z)(\zeta\,x+\zeta^2y+z)=0,
\end{align*}
\end{Small}
\hspace{-1.4mm}where $\zeta=\exp(\tfrac{2\mathrm{i}\pi}{3}).$ As above, we can assume that $\ell=\{\alpha\,x+\beta\,y-z=0\}$ with $\beta\neq0.$ Then the closure of~the~$\mathrm{Aut}(\pp)$-orbit of $\ell\boxtimes\Hesse$ contains the following three homogeneous pre-foliations:
\begin{align*}
\preh_1=\{y-\alpha\,x=0\}\boxtimes\mathcal{H}_{4}^{4},&&
\preh_2=\{y-\beta\,x=0\}\boxtimes\mathcal{H}_{4}^{4},&&
\preh_3=\big\{a\,x+by=0\big\}\boxtimes\mathcal{H}_{4}^{4},
\end{align*}
where $a=\alpha+\beta-1$ and $b=\alpha+\zeta^2\beta-\zeta.$ Indeed, in the affine chart $z=1$, the pre-foliation~$\ell\boxtimes\Hesse$ is given by $\omega=(\alpha\,x+\beta\,y-1)\omegahesse$; putting $\psi_1=\Big(\frac{x}{y},\frac{\varepsilon}{y}\Big),$ $\psi_2=\Big(\frac{\varepsilon}{y},\frac{x}{y}\Big)$ and $\psi_3=\Big(\frac{x+y}{x+\zeta\,y+\varepsilon},\frac{x+\zeta^2y}{x+\zeta\,y+\varepsilon}\Big),$ a straightforward computation shows that
\begin{small}
\begin{align*}
\lim_{\varepsilon\to 0}\varepsilon^{-1}y^{7}\psi_1^*\omega=(\alpha\,x-y)\omega_{4}^{4},
&&\hspace{3mm}
\lim_{\varepsilon\to 0}\varepsilon^{-1}y^{7}\psi_2^*\omega=(\beta\,x-y)\omega_{4}^{4},
&&\hspace{3mm}
\lim_{\varepsilon\to 0}\varepsilon^{-1}(x+\zeta\,y+\varepsilon)^7\psi_3^*\omega=-9\zeta(a\,x+by)\omega_{4}^{4}.
\end{align*}
\end{small}
\hspace{-1.8mm}Since the $5$-web $\Leg(\ell\boxtimes\Hesse)$ is flat by hypothesis, so are the $5$-webs $\Leg\preh_i,\,i=1,2,3.$ Now, according to~Example~\ref{eg:H4}, for any line $\ell_0$ passing through the origin, $\Leg(\ell_0\boxtimes\mathcal{H}_{4}^{4})$ is flat if and only if $\ell_0=\{x=0\}$ or~$\ell_0=\{y-\rho\,x=0\}$ with $\rho(\rho^3-1)(\rho^3+1)=0,$ {\it i.e.} $\rho\in E:=\{0,\zeta^k,-\zeta^k,\,k=0,1,2\}.$ Therefore, the flatness of $\Leg\preh_1$ (resp.~$\Leg\preh_2$) is equivalent to $\alpha\in E$ (resp. $\beta\in E\setminus\{0\}$ because $\beta\neq0$). Note that $(\alpha,\beta)\neq(-\zeta^k,-\zeta^{k'})$, for otherwise $\ell$ would be invariant by $\Hesse$. As a result
\begin{align*}
(\alpha,\beta)\in\Big\{(0,\zeta^{k}),(0,-\zeta^{k}),(\zeta^k,\zeta^{k'}),(\zeta^k,-\zeta^{k'}),(-\zeta^k,\zeta^{k'}),\hspace{1.5mm}k,k'=0,1,2\Big\}.
\end{align*}
If, for $k,k'\in\{0,1,2\},$
\begin{align*}
(\alpha,\beta)\in\Big\{(0,\zeta^k),(0,-\zeta^k)\Big\},
&&\text{resp.}\hspace{1mm}
(\alpha,\beta)\in\Big\{(\zeta^k,\zeta^{k'}),(\zeta^k,-\zeta^{k'})\Big\},
&&
&&\text{resp.}\hspace{1mm}
(\alpha,\beta)=(-\zeta^k,\zeta^{k'}),
\end{align*}
then by conjugating $\omega$ by $\Big(x,\frac{y}{\zeta^{k}}\Big)$, resp.~$\Big(\frac{x}{\zeta^{k}},\frac{y}{\zeta^{k'}}\Big),$ resp.~$\Big(\frac{y}{\zeta^{k}},\frac{x}{\zeta^{k'}}\Big),$ we reduce ourselves to $(\alpha,\beta)\in\{(0,1),(0,-1)\}$, resp.~$(\alpha,\beta)\in\{(1,1),(1,-1)\},$  resp.~$(\alpha,\beta)=(1,-1).$ It follows that, up to linear conjugation,
\[
(\alpha,\beta)\in F:=\big\{(0,1),(0,-1),(1,1),(1,-1)\big\}.
\]
Now, for $(\alpha,\beta)\in F,$ $\Leg\preh_3$ is flat if and only if $(\alpha,\beta)=(0,1),$ since putting $\tau(\alpha,\beta)=-\frac{a}{b},$ we have
\begin{align*}
\tau(0,1)=0\in E,&&
\tau(0,-1)=2\not\in E,&&
\tau(1,1)=\tfrac{\zeta^2}{2}\not\in E,&&
\tau(1,-1)=\tfrac{1}{2}\not\in E.
\end{align*}
Therefore, up to conjugation, $(\alpha,\beta)=(0,1)$, {\it i.e.}~$\ell=\{y-z=0\}$; then $\ell$ passes through four singular points of~$\Hesse,$ namely the points $s_1=[1:0:0],$ $s_2=[1:1:1]$, $s_3=[\zeta:1:1]$ and $s_4=[\zeta^2:1:1].$
\end{proof}

%==================================================================================================================================================================

\section{Pre-foliations of type $(1,3)$ whose associated foliation has only non-degenerate singularities}\label{sec:pre-feuilletages-codegre-1-degre-3}
\bigskip

\noindent In this section, we prove Theorem~\ref{thmalph:Fermat2} stated in the Introduction. To do this, we need two preliminary results, the first of which holds in any degree.

\noindent Let us first recall that in~\S\ref{sec:pre-pre-feuill-convexe-reduit} we have proved Propositions \ref{proalph:ell-non-invariante-Fermat} and \ref{proalph:ell-non-invariante-Hesse} by reducing to the homogeneous case; in~fact this argument is implicitly based on the following proposition.
\begin{pro}\label{pro:pref-degenere-preh}
{\sl Let $\pref=\ell\boxtimes\F$ be a pre-foliation of co-degree $1$ and degree $d\geq2$ on $\pp.$ Assume that the foliation $\F$ has an invariant line $D$ and that all its singularities on $D$ are non-degenerate. There is a~homogeneous pre-foliation $\preh=\ell_0\boxtimes\mathcal{H}$ of co-degree $1$ and degree $d$ on $\pp$ such that:
\vspace{1mm}
\begin{itemize}
\item [(i)] $\preh\in\overline{\mathcal{O}(\pref)}$ and $\mathcal{H}\in\overline{\mathcal{O}(\F)}$;
\vspace{0.5mm}
\item [(ii)] if $\ell=D$ (resp. $\ell\neq D$), then $\ell_0=L_\infty$ (resp. $\ell_0\neq L_\infty)$;
\vspace{0.5mm}
\item [(iii)] $D$ is invariant by $\mathcal{H}$;
\vspace{0.5mm}
\item [(iv)] $\Sing\mathcal{H}\cap D=\Sing\F\cap D$;
\vspace{0.5mm}
\item [(v)] $\forall\hspace{0.5mm}s\in\Sing\mathcal{H}\cap D,\hspace{1mm}
                   \mu(\mathcal{H},s)=1\label{not:nombre-milnor}
                   \hspace{2mm}\text{and}\hspace{2mm}
                   \mathrm{CS}(\mathcal{H},D,s)=\mathrm{CS}(\F,D,s)$.
\end{itemize}
\vspace{1mm}
If, moreover, $\Leg\pref$ (resp. $\Leg\F$) is flat, then $\Leg\preh$ (resp. $\Leg\mathcal{H}$) is also flat.
}
\end{pro}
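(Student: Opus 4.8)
Le plan est de r\'ealiser $\preh$ comme limite de $\pref$ sous l'action d'un sous-groupe \`a un param\`etre de $\mathrm{Aut}(\pp)$, selon la m\^eme strat\'egie que celle employ\'ee implicitement dans les preuves des Propositions~\ref{proalph:ell-non-invariante-Fermat} et~\ref{proalph:ell-non-invariante-Hesse}. Quitte \`a conjuguer $\pref$ par un automorphisme fixe $g\in\mathrm{Aut}(\pp)$, je commencerais par placer la droite invariante $D$ en position $D=L_\infty$; dans le cas $\ell\neq D$ je choisirais de plus $g$ de sorte que l'origine $O=[0:0:1]$ appartienne \`a $\ell$ (ce qui est loisible puisqu'alors $\ell\neq L_\infty$). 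Je consid\'ererais ensuite le tore $T=\{h_\lambda\colon(x,y)\mapsto\lambda(x,y)\}\subset\mathrm{Aut}(\pp)$, qui fixe $O$ et fixe $L_\infty$ point par point, et la limite $\mathcal{H}:=\lim_{\varepsilon\to0}h_\varepsilon^{*}\F$ (apr\`es normalisation par une puissance convenable de $\varepsilon$). Cette limite existe dans $\overline{\mathcal{O}(\F)}$ et est $T$-invariante, donc homog\`ene: ce n'est autre que la partie homog\`ene de plus haut degr\'e de la $1$-forme d\'efinissant $\F$ dans la carte o\`u $D=L_\infty$. On obtient ainsi $\preh=\lim_{\varepsilon\to0}h_\varepsilon^{*}\pref=\ell_0\boxtimes\mathcal{H}\in\overline{\mathcal{O}(\pref)}$, d'o\`u (i), et (iii) est imm\'ediate car $L_\infty$ est invariante par tout feuilletage homog\`ene.

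Pour (ii), j'observerais que la droite $\ell$ est pr\'eserv\'ee par chaque $h_\varepsilon$: si $\ell=D=L_\infty$, alors $L_\infty$ \'etant fixe point par point on a $\ell_0=L_\infty$; si $\ell\neq D$, alors $\ell$ est une droite finie passant par le centre $O$ des homoth\'eties, de sorte que $h_\varepsilon(\ell)=\ell$ et $\ell_0=\ell\neq L_\infty$.

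L'essentiel de la d\'emonstration sera la v\'erification de (iv) et (v), c'est-\`a-dire le fait que la d\'eg\'en\'erescence homog\`ene pr\'eserve exactement la configuration des singularit\'es le long de $D$. Pour cela je travaillerais localement au voisinage de chaque $s\in\Sing\F\cap D$: comme $h_\varepsilon$ fixe $L_\infty$ point par point, aucun point singulier situ\'e sur $D$ ne peut se d\'eplacer ni fusionner, et $\mathcal{H}$ co\"incide pr\`es de $s$ avec la partie dominante de $\F$ relativement \`a la coordonn\'ee transverse \`a $L_\infty$. L'hypoth\`ese de non-d\'eg\'en\'erescence garantit alors que cette partie dominante d\'etermine le $1$-jet de $\F$ en $s$ transversalement \`a $D$, d'o\`u $\mu(\mathcal{H},s)=1$ et $\mathrm{CS}(\mathcal{H},D,s)=\mathrm{CS}(\F,D,s)$. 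Pour exclure l'apparition de singularit\'es suppl\'ementaires de $\mathcal{H}$ sur $D$, j'invoquerais un argument de comptage: un feuilletage de degr\'e $d-1$ poss\`ede exactement $d$ singularit\'es (compt\'ees avec multiplicit\'e) sur une droite invariante, de somme des indices de \textsc{Camacho-Sad} \'egale \`a $1$; la non-d\'eg\'en\'erescence force ces $d$ points \`a \^etre distincts et simples aussi bien pour $\F$ que pour $\mathcal{H}$, et la co\"incidence locale ci-dessus identifie les deux ensembles $\Sing\F\cap D$ et $\Sing\mathcal{H}\cap D$. C'est cette analyse locale, jointe au contr\^ole du degr\'e de $\mathcal{H}$ (qui doit rester \'egal \`a $d-1$, ce que la non-d\'eg\'en\'erescence le long de $D$ doit assurer en emp\^echant un facteur commun de la partie dominante de s'annuler aux points de $\Sing\F\cap D$), qui constitue l'obstacle principal.

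Enfin, pour la derni\`ere assertion, j'utiliserais que la platitude est une condition ferm\'ee et invariante par $\mathrm{Aut}(\pp)$: la transform\'ee de \textsc{Legendre} commute \`a l'action par dualit\'e et la courbure est naturelle, $K(\varphi^{*}\W)=\varphi^{*}K(\W)$ (\S\ref{subsec:courbure-platitude}). Ainsi, si $\Leg\pref$ est plat, chacun des $\Leg(h_\varepsilon^{*}\pref)$ l'est, \'etant isomorphe \`a $\Leg\pref$, et la nullit\'e de la courbure passe \`a la limite $\preh\in\overline{\mathcal{O}(\pref)}$; le m\^eme raisonnement s'applique mot pour mot \`a $\F$ et $\mathcal{H}$.
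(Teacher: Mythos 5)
Votre d\'emonstration suit pour l'essentiel la m\^eme voie que celle du texte~: on place $D$ en $L_\infty$, on fait d\'eg\'en\'erer $\pref$ sous les homoth\'eties $(x,y)\mapsto\lambda(x,y)$, et la limite est la partie homog\`ene de plus haut degr\'e, la platitude passant \`a la limite par fermeture de l'orbite et naturalit\'e de la courbure. La seule diff\'erence notable est que le texte obtient les points (iii)--(v) en invoquant directement la Proposition~6.4 de \cite{BM18Bull}, l\`a o\`u vous en esquissez une v\'erification~; votre esquisse est correcte dans ses grandes lignes, \`a ceci pr\`es que la partie dominante ne d\'etermine pas tout le $1$-jet de $\F$ en $s\in D$ (la composante homog\`ene de degr\'e $d-2$ contribue un terme lin\'eaire en la coordonn\'ee transverse \`a $L_\infty$), mais seulement une partie lin\'eaire triangulaire dont les valeurs propres --- donc $\mu(\F,s)$ et l'indice de \textsc{Camacho}-\textsc{Sad} --- co\"incident avec celles de $\mathcal{H}$, ce qui est exactement ce dont on a besoin.
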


\noindent This proposition is an analogue for co-degree one pre-foliations of Proposition~6.4 of~\cite{BM18Bull} on foliations of $\pp.$

\begin{proof}[\sl Proof]
Choose a homogeneous coordinate system $[x:y:z]\in\pp$ such that $D=L_\infty=(z=0)$. Since $D$ is~$\F$-invariant, $\F$ is defined in the affine chart $z=1$ by a $1$-form $\omega$ of type $$\omega=\sum_{i=0}^{d-1}(A_i(x,y)\mathrm{d}x+B_i(x,y)\mathrm{d}y),$$ where $A_i,\,B_i$ are homogeneous polynomials of degree $i.$ According to~\cite[Proposition 6.4]{BM18Bull}, since all the singularities of $\F$ on $D$ are non-degenerate, the $1$-form $\omega_{d-1}=A_{d-1}(x,y)\mathrm{d}x+B_{d-1}(x,y)\mathrm{d}y$ defines a homogeneous foliation~$\mathcal{H}$ of degree $d-1$ on $\pp$ belonging to $\overline{\mathcal{O}(\F)}$ and satisfying the stated properties (iii), (iv) and~(v).

\noindent Now, write $\ell=\{\alpha\,x+\beta y+\gamma\,z=0\}$; in homogeneous coordinates, $\pref,$ resp. $\mathcal{H},$ is given by
\begin{align*}
&\Omega_{d+1}=(\alpha\,x+\beta y+\gamma\,z)\sum_{i=0}^{d-1}z^{d-i-1}\Big(A_i(x,y)(z\mathrm{d}x-x\mathrm{d}z)+B_i(x,y)(z\mathrm{d}y-y\mathrm{d}z)\Big),&&\\
\text{resp}.\hspace{1.5mm}
&\Omega_{d}=A_{d-1}(x,y)(z\mathrm{d}x-x\mathrm{d}z)+B_{d-1}(x,y)(z\mathrm{d}y-y\mathrm{d}z).
\end{align*}
Putting $\varphi=\varphi_{\varepsilon}=\left[\frac{x}{\varepsilon}:\frac{y}{\varepsilon}:z\right],$ we see that if $(\alpha,\beta)=(0,0),$ resp. $(\alpha,\beta)\neq(0,0),$ then
\begin{align*}
&\hspace{0.5cm}\lim_{\varepsilon\to 0}\varepsilon^{d}\gamma^{-1}\varphi^*\Omega_{d+1}=z\hspace{0.3mm}\Omega_{d},
&&\text{resp.}\hspace{1mm}
\lim_{\varepsilon\to 0}\varepsilon^{d+1}\varphi^*\Omega_{d+1}=(\alpha\,x+\beta y)\Omega_{d}.
\end{align*}
It follows that the closure of the $\mathrm{Aut}(\pp)$-orbit of $\pref$ contains the homogeneous pre-foliation $\preh=\ell_0\boxtimes\mathcal{H},$ where $\ell_0=L_\infty$ if $\ell=D$ and $\ell_0=\{\alpha\,x+\beta y=0\}\neq L_\infty$ if $\ell\neq D.$
\end{proof}

\noindent The following technical lemma is an analogue for pre-foliations of type $(1,3)$ of Lemma~6.7 of \cite{BM18Bull} on foliations of degree $3.$ It plays a key role in the proof of Theorem~\ref{thmalph:Fermat2}.
\begin{lem}\label{lem:2-droites-invariantes}
{\sl
Let $\pref=\ell\boxtimes\F$ be a pre-foliation of co-degree $1$ and degree $3$ on $\pp.$ Assume that the $3$-web $\Leg\pref$ is flat and that the foliation $\F$ has a non-degenerate singularity $m$ satisfying $\mathrm{BB}(\F,m)\neq4.$\label{not:invariant-BB} Then, through the point $m$ pass exactly two $\F$-invariant lines.
}
\end{lem}

\begin{proof}[\sl Proof]
The hypotheses $\mu(\F,m)=1$ and $\mathrm{BB}(\F,m)\neq4$ ensure the existence of an affine chart~$(x,y)$ of $\pp$ in which
$m=(0,0)$ and $\F$ is defined by a $1$-form $\omega_0$ of type $\omega_0=\omega_{0,1}+\omega_{0,2}+\omega_{0,3},$ where
\begin{small}
\begin{align*}
\omega_{0,1}=\lambda\hspace{0.1mm}y\mathrm{d}x+\mu\hspace{0.1mm}x\mathrm{d}y,&&
\omega_{0,2}=\left(\sum_{i=0}^{2}a_{i}\hspace{0.1mm}x^{2-i}y^{i}\right)\mathrm{d}x+\left(\sum_{i=0}^{2}b_{i}\hspace{0.1mm}x^{2-i}y^{i}\right)\mathrm{d}y,&&
\omega_{0,3}=\left(\sum_{i=0}^{2}c_{i}\hspace{0.1mm}x^{2-i}y^{i}\right)(x\mathrm{d}y-y\mathrm{d}x),
\end{align*}
\end{small}
\hspace{-1mm}with $\lambda\mu(\lambda+\mu)\neq0.$

\noindent The only lines passing through $m$ and which can be invariant by $\F$ are $(x=0)$ and $(y=0).$ Indeed, denote by~$\mathrm{R}=x\frac{\partial{}}{\partial{x}}+y\frac{\partial{}}{\partial{y}}$ the radial vector field centered at $m$; if $L=(ux+vy=0)$ is $\F$-invariant, then $ux+vy$ must divide the tangent cone $\mathrm{C}_{\omega_{0,1}}:=\omega_{0,1}(\mathrm{R})=(\lambda+\mu)xy,$ so that $u=0$ or $v=0.$

\noindent We will show that indeed $(x=0)$ and $(y=0)$ are invariant by $\F,$ which will establish the lemma. We have to prove that $a_0=b_2=0,$ since the invariance by $\F$ of $(x=0)$, resp. $(y=0),$ is equivalent to the vanishing of $b_2$, resp. $a_0.$

\noindent If $\ell=\{\alpha\,x+\beta\,y+\gamma=0\}$ then, in the affine chart $(p,q)$ of $\pd$ corresponding to the line $\{y=px-q\}\subset{\mathbb{P}^{2}_{\mathbb{C}}},$ the $3$-web $\Leg\pref$ is described by the symmetric $3$-form
\begin{Small}
\begin{align*}
&\hspace{0.5cm}\check{\omega}=\big((\gamma-\beta\,q)\mathrm{d}p+(\alpha+\beta\,p)\mathrm{d}q\big)\check{\omega}_{0},
\\
&\hspace{-0.76cm}{\fontsize{11}{11pt}\text{where}}
\\
&\hspace{0.5cm}\check{\omega}_{0}=\mu\,p\mathrm{d}p\mathrm{d}q
+(a_0+b_0p+c_0q)\mathrm{d}q^2
+\Big(\lambda\,\mathrm{d}p+(a_1+b_1p+c_1q)\mathrm{d}q\Big)(p\mathrm{d}q-q\mathrm{d}p)
+\big(a_2+b_2p+c_2q\big)\big(p\mathrm{d}q-q\mathrm{d}p\big)^2.
\end{align*}
\end{Small}
\hspace{-1.74mm}Assume by contradiction that $a_0\neq0.$ Consider the family of automorphisms $\varphi=\varphi_{\varepsilon}=(a_0\hspace{0.1mm}\varepsilon\hspace{0.1mm}p,\hspace{0.1mm}a_0\hspace{0.1mm}\varepsilon^{2}\hspace{0.1mm}q).$ We~see~that~if $\gamma\neq0$,\, resp. $\gamma=0$ and $\alpha\neq0,$\, resp. $\gamma=\alpha=0,$ then
\begin{align*}
&\hspace{0.5cm}\lim_{\varepsilon\to 0}\varepsilon^{-5}\gamma^{-1}a_{0}^{-4}\varphi^*\check{\omega}=\theta_1\eta,
&&\text{resp.}\hspace{1mm}
\lim_{\varepsilon\to 0}\varepsilon^{-6}\alpha^{-1}a_{0}^{-4}\varphi^*\check{\omega}=\theta_2\eta,
&&\text{resp.}\hspace{1mm}
\lim_{\varepsilon\to 0}\varepsilon^{-7}\beta^{-1}a_{0}^{-5}\varphi^*\check{\omega}=\theta_3\eta,
\end{align*}
where
\begin{align*}
&\hspace{-0.14cm}\theta_1=\mathrm{d}p,
&&
\theta_2=\mathrm{d}q,
&&
\theta_3=p\mathrm{d}q-q\mathrm{d}p,
&&
\eta=-\lambda\,q\diffp^2+(\lambda+\mu)p\diffp\diffq+\diffq^2.
\end{align*}
For $i=1,2,3,$ put $\W_{3}^{(i)}=\F_i\boxtimes\W_2,$ where $\W_2$ (resp. $\F_i$) denotes the $2$-web (resp. the foliation) defined by $\eta$ (resp. by $\theta_i$). It follows that if $\gamma\neq0$,\, resp. $\gamma=0$ and $\alpha\neq0,$\, resp. $\gamma=\alpha=0,$ then the closure of the $\mathrm{Aut}(\pd)$-orbit of $\Leg\pref$ contains the $3$-web $\W_{3}^{(1)},$ resp. $\W_{3}^{(2)},$ resp. $\W_{3}^{(3)}.$ Now, since $\Leg\pref$ is flat by hypothesis, every $3$-web belonging to $\overline{\mathcal{O}(\Leg\pref)}$ is also flat. Therefore, to obtain a contradiction, it~suffices to show that for every~$i=1,2,3,$ $\W_{3}^{(i)}$ is not flat. Since $\Delta(\eta)=f(p,q):=4\lambda\,q+(\lambda+\mu)^2p^2$, it suffices again to show that for every $i=1,2,3,$ the curvature of $\W_{3}^{(i)}$ is not holomorphic along the component $\mathcal{C}=\{f(p,q)=0\}\subset\Delta(\W_2),$ which is a parabola, because $\lambda(\lambda+\mu)\neq0.$
\medskip

\noindent First of all, let us note that $\mathcal{C}$ is not invariant by $\W_2$, since putting $\eta_0=(\lambda+\mu)p\mathrm{d}p+2\mathrm{d}q,$ we have
\begin{align*}
&\hspace{-0.4cm}\eta\big|_{\mathcal{C}}=\left(\frac{\eta_0}{2}\right)^2
\qquad\qquad\text{and}\qquad\qquad
\eta_0\wedge\mathrm{d}f=-4\mu(\lambda+\mu)p\diffp\wedge\diffq\not\equiv0.
\end{align*}

\noindent Let us consider the case where $i\in\{1,2\}.$ Since
\begin{align*}
&\hspace{1.27cm}\eta_0\wedge\theta_1\Big|_{\mathcal{C}}=-2\diffp\wedge\diffq\not\equiv0
\qquad\qquad\text{and}\qquad\qquad
\eta_0\wedge\theta_2\Big|_{\mathcal{C}}=(\lambda+\mu)p\diffp\wedge\diffq\not\equiv0,
\end{align*}
we have $\mathcal{C}\not\subset\Tang(\W_2,\F_i).$ Therefore, according to~\cite[Theorem~1]{MP13} (\emph{cf.} \cite[Theorem~1.1]{BFM14}), the curvature $K(\W_{3}^{(i)})$ is holomorphic on $\mathcal{C}$ if and only if $\mathcal{C}$ is invariant by $\F_i,$ which is impossible, because each $\F_i$ is a~pencil of lines and hence cannot admit a parabola as an invariant curve.
\medskip

\noindent Let us now examine the case where $i=3.$ In this case $\mathcal{C}\subset\Tang(\W_2,\F_3)$ if and only if $\lambda=\mu,$ because
\begin{align*}
\eta_0\wedge\theta_3\Big|_{\mathcal{C}}=\frac{1}{2\lambda}(\lambda-\mu)(\lambda+\mu)p^2\diffp\wedge\diffq\equiv0\Longleftrightarrow\lambda=\mu.
\end{align*}
If $\lambda\neq\mu$, then, as above, we can apply Theorem~1~of~\cite{MP13} and assert that $K(\W_{3}^{(3)})$ cannot be holomorphic~on~$\mathcal{C}.$
\medskip

\noindent We therefore assume that $\lambda=\mu$ and prove that $K(\W_{3}^{(3)})\not\equiv0.$ The pull-back of $\W_{3}^{(3)}$ by the rational map $\psi(p,q)=\left(p,\mu(q^2-p^2)\right)$ writes as $\psi^*\W_{3}^{(3)}=\F_{3}^{(1)}\boxtimes\F_{3}^{(2)}\boxtimes\F_{3}^{(3)}$, where
\begin{align*}
&
\F_{3}^{(1)}\hspace{0.1mm}:\hspace{0.1mm}(p^2+q^2)\diffp-2pq\diffq=0,&&
\F_{3}^{(2)}\hspace{0.1mm}:\hspace{0.1mm}(p+q)\diffp-2q\diffq=0,&&
\F_{3}^{(3)}\hspace{0.1mm}:\hspace{0.1mm}(p-q)\diffp-2q\diffq=0.
\end{align*}

\noindent Using formula~(\ref{equa:eta-rst}), a direct computation leads to
\begin{align*}
&\hspace{-1cm}
\eta(\psi^{*}\W_{3}^{(3)})=-\frac{p\mathrm{d}p}{q^2}+\frac{4\mathrm{d}q}{q}+\frac{\mathrm{d}(p^2-q^2)}{p^2-q^2},
\end{align*}
so that
\[
K(\psi^{*}\W_{3}^{(3)})=\mathrm{d}\eta(\psi^{*}\W_{3}^{(3)})=-\frac{2p}{q^3}\mathrm{d}p\wedge\mathrm{d}q\not\equiv0,
\]
hence $\psi^{*}K(\W_{3}^{(3)})=K(\psi^{*}\W_{3}^{(3)})\not\equiv0$ and therefore $K(\W_{3}^{(3)})\not\equiv0.$
\vspace{2mm}

\noindent We have thus shown that $a_0=0$, which means that the line $(y=0)$ is invariant by $\F$. Exchanging~the~roles~of the coordinates $x$ and $y$, the same argument shows that $b_2=0$, {\it i.e.} that the line $(x=0)$ is also invariant~by~$\F.$
\end{proof}

\noindent Before starting the proof of Theorem~\ref{thmalph:Fermat2}, let us recall (\emph{cf.} \cite{Bru15}) that if $\F$ is a foliation of degree $d$ on $\pp$ then
\begin{align}\label{equa:Darboux-BB}
&\sum_{s\in\mathrm{Sing}\F}\mu(\F,s)=d^2+d+1
&&\text{and}&&
\sum_{s\in\mathrm{Sing}\F}\mathrm{BB}(\F,s)=(d+2)^2.
\end{align}

\vspace{2mm}

\begin{proof}[\sl Proof of Theorem~\ref{thmalph:Fermat2}]
Write $\Sing\F=\Sigma^{1}\cup\Sigma^{2}$, where
\begin{align*}
\Sigma^{1}=\{s\in\Sing\F\hspace{1mm}\colon \mathrm{BB}(\F,s)=4\}
&&\text{and}&&
\Sigma^{2}=\Sing\F\setminus\Sigma^{1}.
\end{align*}
For $i=1,2,$ put $\kappa_i=\#\hspace{0.5mm}\Sigma^{i}$. By hypothesis, we have $\deg\F=2$ and, for any $s\in\Sing\F$, $\mu(\F,s)=1$. Formulas (\ref{equa:Darboux-BB}) then give
\begin{align}\label{equa:Dar-BB-2}
&\#\hspace{0.5mm}\Sing\F=\kappa_1+\kappa_2=7 &&\text{and} &&4\kappa_1+\sum_{s\in\Sigma^{2}}\mathrm{BB}(\F,s)=16.
\end{align}
It follows that $\Sigma^{2}$ is non-empty. Let $m$ be a point of $\Sigma^{2}$; by Lemma~\ref{lem:2-droites-invariantes} through the point $m$ pass exactly two $\F$-invariant lines $ D_{m}^{(1)}$ and $ D_{m}^{(2)}$. Then, for $i=1,2$, Proposition~\ref{pro:pref-degenere-preh} ensures the existence of a homogeneous pre-foliation $\preh^{(i)}_{m}=\ell_{m}^{(i)}\boxtimes\mathcal{H}^{(i)}_{m}$ of type $(1,3)$ on $\pp$ belonging to $\overline{\mathcal{O}(\pref)}$ and such that the line $D_{m}^{(i)}$ is $\mathcal{H}^{(i)}_{m}$-invariant. Since $\Leg\pref$ is flat by hypothesis, so are~$\Leg\preh^{(1)}_{m}$ and $\Leg\preh^{(2)}_{m}.$ Therefore, $\preh^{(i)}_{m}$ ($i=1,2$) is linearly conjugate to one of the eight models of Corollary~\ref{cor:class-pre-homogenes-plats-co-degre-1-degre-3}. For~$i=1,2,$ Proposition~\ref{pro:pref-degenere-preh} also ensures that
\vspace{1mm}
\begin{itemize}
\item [($\mathfrak{a}$)] if $\ell\neq D_{m}^{(i)}$, then $\ell_{m}^{(i)}\neq L_\infty$;
\vspace{0.5mm}

\item [($\mathfrak{b}$)] $\Sing\F\cap D_{m}^{(i)}=\Sing\mathcal{H}^{(i)}_{m}\cap D_{m}^{(i)}$;
\vspace{0.5mm}

\item [($\mathfrak{c}$)] $\forall\hspace{0.5mm}s\in\Sing\mathcal{H}^{(i)}_{m}\cap D_{m}^{(i)},\quad
                   \mu(\mathcal{H}^{(i)}_{m},s)=1\hspace{2mm}\text{and}\quad
                   \mathrm{CS}(\mathcal{H}^{(i)}_{m}, D_{m}^{(i)},s)=\mathrm{CS}(\F, D_{m}^{(i)},s)$.
\end{itemize}
\vspace{1mm}

\noindent Since $\mathrm{CS}(\F, D_{m}^{(1)},m)\mathrm{CS}(\F, D_{m}^{(2)},m)=1,$ we have
\begin{align}\label{equa:produit-CS-1}
\mathrm{CS}(\mathcal{H}^{(1)}_{m}, D_{m}^{(1)},m)\mathrm{CS}(\mathcal{H}^{(2)}_{m}, D_{m}^{(2)},m)=1.
\end{align}

\noindent Let us first assume that $\ell\neq D_{m}^{(i)}$ for $i=1,2$; this is obviously the case if $\ell$ is not invariant by $\F.$ Then, by~($\mathfrak{a}$), we have $\ell_{m}^{(i)}\neq L_\infty$ for $i=1,2.$ Therefore, each of the $\preh^{(i)}_{m}$ is conjugate to one of the five pre-foliations $\preh_{j}^{3},j=2,\ldots,6,$ so that each of the $\mathcal{H}^{(i)}_{m}$ is conjugate to one of the three foliations $\mathcal{H}_{1}^{2}$, $\mathcal{H}_{2}^{2}(0,0),$ $\mathcal{H}_{3}^{2}(-2)$ (Corollary~\ref{cor:class-pre-homogenes-plats-co-degre-1-degre-3}). Consulting Table~\ref{tab:CS(lambda)} and using equality~(\ref{equa:produit-CS-1}) as well as relations ($\mathfrak{b}$) and ($\mathfrak{c}$), we see that
\begin{Small}
\begin{align}\label{equa:CS-moins-1-Sigma-i-cap-D-m-i}
\mathrm{CS}(\mathcal{H}^{(1)}_{m}, D_{m}^{(1)},m)=\mathrm{CS}(\mathcal{H}^{(2)}_{m}, D_{m}^{(2)},m)=-1,&&\hspace{2mm}
\#\hspace{0.5mm}(\Sigma^{1}\cap D_{m}^{(1)})=\#\hspace{0.5mm}(\Sigma^{1}\cap D_{m}^{(2)})=2,&&\hspace{2mm}
\Sigma^{2}\cap D_{m}^{(1)}=\Sigma^{2}\cap D_{m}^{(2)}=\{m\}.
\end{align}
\end{Small}
\hspace{-1.2mm}Let us now assume that the line $\ell$ is equal to one of the lines $D_{m}^{(i)}$, say $\ell=D_{m}^{(2)}.$ Let us show that equalities~(\ref{equa:CS-moins-1-Sigma-i-cap-D-m-i}) still hold. Since $\ell\neq D_{m}^{(1)}$, $\mathcal{H}^{(1)}_{m}$ is conjugate to one of the foliations $\mathcal{H}_{1}^{2}$, $\mathcal{H}_{2}^{2}(0,0),$ $\mathcal{H}_{3}^{2}(-2).$ Moreover $\Sigma^2\cap D_{m}^{(1)}=\{m\}$; indeed, if $\Sigma^2\cap D_{m}^{(1)}$ contained another point $m'\neq m,$ we would have $\ell\neq D_{m'}^{(i)}$ for $i=1,2,$ so~that~$\{m'\}=\Sigma^{2}\cap D_{m'}^{(i)}=\Sigma^{2}\cap D_{m}^{(1)}\supset\{m,m'\}$, which is impossible. From Table~\ref{tab:CS(lambda)}, we deduce that
\begin{align*}
&\mathrm{CS}(\mathcal{H}^{(1)}_{m}, D_{m}^{(1)},m)=\mathrm{CS}(\mathcal{H}^{(2)}_{m},\ell,m)=-1
\qquad\qquad\text{and}\qquad\qquad
\#\hspace{0.5mm}(\Sigma^{1}\cap D_{m}^{(1)})=2,
\end{align*}
hence
\begin{align*}
&\hspace{-6.26cm}\mathrm{CS}(\F, D_{m}^{(1)},m)=\mathrm{CS}(\F,\ell,m)=-1.
\end{align*}
Since these equalities are valid for any choice of $m\in\Sigma^2\cap\ell$ and since every line of $\pp$ cannot contain more than $\deg\F+1=3$ singular points of $\F,$ the \textsc{Camacho}-\textsc{Sad} formula (\emph{see} \cite{CS82}) $\sum_{s\in\Sing\F\cap \ell}\mathrm{CS}(\F,\ell,s)=1$ implies that
\begin{align*}
\#\hspace{0.5mm}(\Sigma^{1}\cap\ell)=2
\qquad\qquad\text{and}\qquad\qquad
\Sigma^{2}\cap\ell=\{m\}.
\end{align*}

\noindent Equalities~(\ref{equa:CS-moins-1-Sigma-i-cap-D-m-i}) are thus established in all cases. It follows in particular that $\mathrm{BB}(\F,m)=0.$ The point~$m\in\Sigma^{2}$ being arbitrary, $\Sigma^{2}$ consists of $s\in\Sing\F$ such that $\mathrm{BB}(\F,s)=0$. System (\ref{equa:Dar-BB-2}) then rewrites as  $\kappa_1+\kappa_2=7$\, and \,$4\kappa_1=16,$ whose unique solution is $(\kappa_1,\kappa_2)=(4,3)$, that is \hspace{1mm} $\Sing\F=\Sigma^{1}\cup\Sigma^{2},\hspace{2mm}\#\hspace{0.5mm}\Sigma^{1}=4$\hspace{2mm} and \hspace{2mm}$\#\hspace{0.5mm}\Sigma^{2}=3.$ Since $\Sigma^2\cap (D_{m}^{(1)}\cup D_{m}^{(2)})=\{m\}$, $\F$ has $3\cdot2=6$ invariant lines, which means that $\F$ is~reduced~convex. It~then~follows~from the classification of convex foliations of degree two (\emph{cf.} \cite[Proposition~7.4]{FP15} or \cite[Theorem~A]{BM20Bull}) that $\F$ is linearly conjugate to the \textsc{Fermat} foliation $\F_{0}^{2}.$ We conclude by noting that if the~line~$\ell$~is~not~invariant by $\F$, the flatness of $\Leg\pref$ and Proposition~\ref{proalph:ell-non-invariante-Fermat} imply that $\ell$ must join two non-radial singularities of~$\F.$
\end{proof}

%==================================================================================================================================================================
\section*{Index of notations}

\bigskip
\bigskip

\newlength{\largeur}
\setlength{\largeur}{12.5cm}
\addtolength{\largeur}{0cm}
\hspace{-0.5cm}
\begin{tabular}{p{1.5cm}p{\largeur}}
\noteA{\Leg\pref}{\textsc{Legendre} transform of the pre-foliation $\pref$}{not:Leg-pref}

\noteA{\Delta(\W)}{discriminant of the web $\W$}{not:Delta-W}

\noteA{\mathcal{T}_{\mathcal{H}}}{type of the homogeneous foliation $\mathcal{H}$}{not:Type-H}

\noteA{\eta(\W)}{fundamental form of the web $\W$}{not:eta-W}

\noteA{K(\W)}{curvature of the web $\W$}{not:K-W}

\noteA{\mathcal{G}_{\F}}{\textsc{Gauss} map associated to the foliation $\F$}{not:Gauss-F}

\noteA{\mathrm{Sing}\mathcal{F}}{singular locus of the foliation $\mathcal{F}$}{not:Sing-F}

\noteA{\ItrF}{transverse part of the inflection divisor $\IF$}{not:Inflex-Transverse-F}

\noteA{\mathrm{rk}(\W)}{rank of the web $\W$}{not:rk-W}

\noteA{S_\W}{characteristic surface of the web $\W$}{not:S-W}

\noteA{\Cinv}{tangent cone at the origin of the homogeneous foliation $\mathcal{H}$}{not:C-H}

\noteA{\IF}{inflection divisor of the foliation $\F$}{not:Inflex-F}

\noteA{\IinvF}{invariant part of the inflection divisor $\IF$}{not:Inflex-Invariant-F}

\noteA{\omega_{\hspace{0.2mm}0}^{\hspace{0.2mm}d-1}}{$(d-2)y^{d-1}\mathrm{d}x+x\left(x^{d-2}-(d-1)y^{d-2}\right)\mathrm{d}y$}{not:omega0}

\noteA{\omega_{\hspace{0.2mm}4}^{\hspace{0.2mm}d-1}}{$y(\sigma_d\,x^{d-2}-y^{d-2})\diffx+x(\sigma_d\,y^{d-2}-x^{d-2})\diffy$}{not:omega4}

\noteA{\omega_{1}^{d-1}}{$y^{d-1}\mathrm{d}x-x^{d-1}\mathrm{d}y$}{not:omega1}

\noteA{\omega_{2}^{d-1}(\lambda,\mu)}{$(x^{d-1}+\lambda\,y^{d-1})\mathrm{d}x+(\mu\,x^{d-1}-y^{d-1})\mathrm{d}y$}{not:omega2}

\noteA{\omega_{3}^{d-1}(\lambda)}{$(x^{d-1}+\lambda y^{d-1})\mathrm{d}x+x^{d-1}\mathrm{d}y$}{not:omega3}

\noteA{\mathrm{CS}(\mathcal{F},\mathcal{C},s)}{\textsc{Camacho-Sad} index of the foliation $\mathcal{F}$ at the point $s$ along the curve $\mathcal{C}$}{not:indice-CS}

\noteA{\tau(\mathcal{F},s)}{tangency order of the foliation $\F$ with a generic line passing through the point $s$}{not:tau-F-s}

\noteA{\mathcal{O}(\pref)}{orbit of the pre-foliation $\pref$ under the action of $\mathrm{Aut}(\pp)$}{not:orbite-pref}

\noteA{\mu(\mathcal{F},s)}{\textsc{Milnor} number of the foliation $\mathcal{F}$ at the singular point $s$}{not:nombre-milnor}

\noteA{\mathrm{BB}(\mathcal{F},s)}{\textsc{Baum}-\textsc{Bott} invariant of the foliation $\mathcal{F}$ at the singular point $s$}{not:invariant-BB}
\end{tabular}

%==================================================================================================================================================================

\end{document}